\def\eps{\varepsilon}
\def\e{{\rm e}}
\def\dist{{\rm dist}}
\def\dd{{\rm d}}
\def\ddt{{\frac{\dd}{\dd t}}}
\def\R {\mathbb{R}}
\def\u {\boldsymbol{u}}
\def\w {\boldsymbol{w}}
\def\N {\mathbb{N}}
\def\RR {{\mathcal R}}
\def\Q {{\mathcal Q}}
\def\ZZ {{\mathbb Z}}
\def\TT {{\mathbb T}^2}
\DeclareMathOperator*{\esup}{ess\,sup}
\def\de{{\partial}}
\newtheorem{proposition}{Proposition}[section]
\newtheorem{theorem}[proposition]{Theorem}
\newtheorem{corollary}[proposition]{Corollary}
\newtheorem{lemma}[proposition]{Lemma}
\theoremstyle{definition}
\newtheorem{remark}[proposition]{Remark}
\numberwithin{equation}{section}
\title[Subcritical SQG equations in scale-invariant spaces]{Long time behavior and critical limit of subcritical SQG equations in scale-invariant Sobolev spaces}
\author[M. Coti Zelati]{Michele Coti Zelati}
\address{Department of Mathematics, University of Maryland, College Park, MD 20742, USA}
\email{micotize@umd.edu}
\subjclass[2000]{35Q35, 35B41, 35B45}
\keywords{Surface quasi-geostrophic equation, global attractors, nonlinear lower bounds}
\begin{document}

\begin{abstract}
We consider the subcritical SQG equation in its natural scale invariant Sobolev space and prove the
existence of a global attractor of optimal regularity. The proof is based on 
a new energy estimate in Sobolev spaces to bootstrap the regularity
to the optimal level, derived by means of nonlinear lower bounds on the fractional laplacian. This estimate
appears to be new in the literature, and allows a sharp use of the subcritical nature of the $L^\infty$ bounds
for this problem. As a byproduct, we obtain attractors for weak solutions as well. Moreover, we study the critical
limit of the attractors and prove their stability and upper-semicontinuity with respect to the strength of the diffusion.
\end{abstract}


\maketitle


\section{Introduction}
The dissipative surface  quasi-geostrophic equation (SQG) describes
the evolution of the potential temperature $\theta$ on the two-dimensional horizontal boundaries of general three-dimensional
quasi-geostrophic equations \cites{CMT94,P87}. Due to its similarities with the
three-dimensional Euler and Navier-Stokes equations, it has attracted the attention
of many mathematicians over the last two decades. 
Formulated on the two-dimensional torus $\TT=[0,1]^2$, the Cauchy problem reads
\begin{equation}\tag{SQG$_\gamma$}\label{eq:SQGgamma}
\begin{cases}
\de_t\theta +\u \cdot \nabla \theta+\Lambda^\gamma\theta=f,\\
\u = \RR^\perp \theta = \nabla^\perp \Lambda^{-1} \theta,\\
\theta(0)=\theta_0,\quad \int_{\TT}\theta_0(x)=0.
\end{cases}
\end{equation}
Here, $\Lambda=\sqrt{-\Delta}$ is the Zygmund operator, $\gamma\in(0,2)$ is a parameter 
measuring the strength of the diffusion, for which
the diffusivity parameter has been normalized to 1,
and $f$ is a time-independent, mean-free forcing term.
In this note, we will focus on the so-called \emph{subcritical} case, when $\gamma\in (1,2)$,
and prove the following result.

\begin{theorem}\label{thm:globalattra}
Let $\gamma\in (1,2)$ be fixed, and assume that
$f\in L^\infty\cap H^{2-\gamma}$. The dynamical system $S_\gamma(t)$ generated by \eqref{eq:SQGgamma} 
on $H^{2-\gamma}$ possesses a unique invariant global attractor $A_\gamma$, bounded in $H^{2-\gamma/2}$, and 
therefore compact in $H^{2-\gamma}$. In particular,
\begin{equation}
\lim_{t\to \infty}\dist_{H^{2-\gamma}}(S_\gamma(t)B,A_\gamma)=0,
\end{equation}
for every bounded set $B\subset H^{2-\gamma}$.
\end{theorem}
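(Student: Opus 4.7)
The plan is to deploy the standard framework for global attractors of dissipative dynamical systems: verify well-posedness and continuity of the semigroup $S_\gamma(t)$ on $H^{2-\gamma}$, establish a bounded absorbing set at that scale, and then promote it to a compact absorbing set by showing that trajectories eventually enter a bounded set in the strictly more regular space $H^{2-\gamma/2}$, which embeds compactly into $H^{2-\gamma}$ by Rellich's theorem. Subcritical well-posedness for $\gamma\in(1,2)$ is by now classical and I would invoke it directly.

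The a priori bounds come in a cascade, each step feeding the next. Running $L^p$ estimates on \eqref{eq:SQGgamma} and letting $p\to\infty$, or equivalently invoking the C\'ordoba--C\'ordoba nonlocal maximum principle, one uses the subcriticality $\gamma>1$ together with $f\in L^\infty$ to obtain an absorbing ball in $L^\infty$:
\begin{equation}
\limsup_{t\to\infty}\|\theta(t)\|_{L^\infty}\le R_\infty(\|f\|_{L^\infty}).
\end{equation}
A $\dot H^{2-\gamma}$ energy estimate -- in which the critical commutator $[\Lambda^{2-\gamma},\u\cdot\nabla]\theta$ is closed by combining Kato--Ponce-type inequalities, the Riesz bound $\u=\RR^\perp\theta$, the just-obtained $L^\infty$ control, and interpolation with the linear dissipation $\|\Lambda^{2-\gamma/2}\theta\|_{L^2}^2$ -- then delivers an absorbing ball in the scale-invariant phase space $H^{2-\gamma}$.

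The main obstacle is the bootstrap from $H^{2-\gamma}$ to $H^{2-\gamma/2}$. A plain $\Lambda^{2-\gamma/2}$ energy estimate fails because at that level the transport nonlinearity is once again exactly critical and cannot be absorbed by the quadratic dissipation after any product-rule splitting. The novel ingredient, flagged in the abstract, is a pointwise nonlinear lower bound of Constantin--Vicol type, of schematic form
\begin{equation}
\int_\TT g\,\Lambda^\gamma g\,\dd x\ge c\,\frac{\|g\|_{L^2}^{2+\beta}}{\|\theta\|_{L^\infty}^{\alpha}},
\end{equation}
applied to $g=\Lambda^{2-\gamma/2}\theta$ with exponents $\alpha,\beta>0$ depending only on $\gamma$. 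Since $\|\theta\|_{L^\infty}$ is already under control from the previous step, this produces a super-quadratic dissipation strong enough to dominate the critical nonlinear term and, after routine manipulations, yields a differential inequality of the form $\ddt y + c\,y^{1+\beta/2}\le C$ for $y(t)=\|\Lambda^{2-\gamma/2}\theta(t)\|_{L^2}^2$. An absorbing ball in $H^{2-\gamma/2}$ follows.

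Once a compact absorbing set in $H^{2-\gamma}$ is secured and the semigroup $S_\gamma(t)$ is known to be continuous on $H^{2-\gamma}$, the abstract attractor existence theorem (Babin--Vishik, Hale, Temam) delivers the unique invariant set $A_\gamma$, bounded in $H^{2-\gamma/2}$, compact in $H^{2-\gamma}$, and attracting every bounded $B\subset H^{2-\gamma}$ in the $H^{2-\gamma}$ metric, as stated.
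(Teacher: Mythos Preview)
Your cascade has the two key Sobolev steps interchanged, and the swap is not harmless: the place you think is routine is precisely where the new idea is needed, and vice versa.

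\medskip

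\textbf{The $H^{2-\gamma}$ absorbing set does not follow from Kato--Ponce plus the $L^\infty$ bound.} Since $\u=\RR^\perp\theta$ and Riesz transforms are unbounded on $L^\infty$, the $L^\infty$ control on $\theta$ gives no $L^\infty$ control on $\u$ or $\nabla\u$. The standard commutator estimate at the scale-invariant level then yields only
\[
\ddt \|\theta\|_{H^{2-\gamma}}^2 +\|\theta\|^2_{H^{2-\gamma/2}}\leq c\,\|\theta\|_{H^{2-\gamma}}\|\theta\|_{H^{2-\gamma/2}}^2+ c\|f\|^2_{H^{2-\gamma}},
\]
which is exactly critical and gives no dissipative estimate (this is spelled out in the paper as \eqref{eq:sad}). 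The nonlinear lower bound is needed \emph{here}, not at the $H^{2-\gamma/2}$ step. Moreover, it cannot be applied to $g=\Lambda^{2-\gamma/2}\theta$ (or $\Lambda^{2-\gamma}\theta$) as you suggest: the Constantin--Vicol pointwise bound is known for $\nabla\theta$ and for finite differences $\delta_h\theta$, but its validity for $\Lambda^\alpha\theta$ with non-integer $\alpha$ is unclear (see Remark~3.3). The paper instead runs the whole argument at the level of finite differences, tracking $v(x,t;h)=\delta_h\theta/|h|^{1+\alpha}$, combining the pointwise lower bound on $D_\gamma[\delta_h\theta]$ with a matching pointwise upper bound on $|\delta_h\u|$, and integrating in $(x,h)$ to arrive at \eqref{eq:sob2}. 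This is the genuine new input.

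\medskip

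\textbf{Conversely, the $H^{2-\gamma/2}$ step is the easy one.} Once \eqref{eq:sob2} furnishes an $H^{2-\gamma}$ absorbing set together with time-integrability of $\|\theta\|_{H^{2-\gamma/2}}^2$, the higher estimate closes by the \emph{ordinary} commutator estimate, yielding
\[
\ddt \|\theta\|^2_{H^{2-\gamma/2}}+\tfrac12\|\theta\|^2_{H^2}\leq \|f\|^2_{H^{2-\gamma}}+c\|\theta\|_{H^{2-\gamma/2}}^4,
\]
after which the uniform Gronwall lemma gives the $H^{2-\gamma/2}$ absorbing ball (Theorem~\ref{thm:H32abs}). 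No nonlinear lower bound is used here.

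\medskip

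Two smaller points: the $L^\infty$ absorbing set cannot come from the C\'ordoba--C\'ordoba $L^\infty$ maximum principle alone, since initial data in $H^{2-\gamma}$ need not lie in $L^\infty$; one needs a De~Giorgi-type $L^2\to L^\infty$ regularization (Theorem~\ref{thm:absLinf}). And continuity of $S_\gamma(t)$ on all of $H^{2-\gamma}$ is not asserted; the paper proves Lipschitz continuity only on the regular absorbing set $B_2^\gamma$, which suffices for invariance of the attractor.
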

Another important feature of the attractors $A_\gamma$ is their \emph{stability} with respect to the parameter $\gamma$,
as $\gamma\to 1^+$. Namely, the following uniform estimates and upper semicontinuity result hold.


\begin{theorem}\label{thm:stability}
Let $\gamma_0\in (1,2)$ be arbitrarily fixed, and assume $f\in L^\infty\cap H^1$. 
The family of attractors $\{A_\gamma\}_{\gamma\in (1,\gamma_0]}$ is uniformly bounded
with respect to $\gamma$ in $H^{2-\gamma/2}$, namely, there exists a constant $C_0=C_0(\gamma_0, f)>0$ such that
\begin{align}
\sup_{\gamma \in (1,\gamma_0]} \|A_\gamma\|_{H^{2-\gamma/2}}\leq C_0.
\end{align} 
Moreover,   $A_\gamma$ has uniformly finite fractal dimension in $H^{2-\gamma}$, that is, there exists a constant $D_0=D_0(\gamma_0, f)>0$ such that
\begin{align}
\sup_{\gamma \in (1,\gamma_0]} \dim_{H^{2-\gamma}}A_\gamma\leq D_0.
\end{align} 
Finally, the family of attractors $\{A_\gamma\}_{\gamma\in (1,\gamma_0]}$ is 
upper semicontinuous as $\gamma\to 1^+$. Precisely,
\begin{align}\label{eq:stability}
\lim_{\gamma\to1^+}\dist_{H^1}(A_\gamma,A_1)=0,
\end{align}
where $A_1\subset H^1$ is the global attractor for the critical SQG equation (when $\gamma=1$).
\end{theorem}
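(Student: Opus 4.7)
The plan is to derive all three assertions from $\gamma$-uniform refinements of the \emph{a priori} estimates underlying Theorem~\ref{thm:globalattra}, combined with a standard compactness scheme for upper semicontinuity of attractors; the existence of the critical attractor $A_1\subset H^1$ itself is taken from the literature on critical SQG.

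For the uniform $H^{2-\gamma/2}$ bound I would revisit the energy estimate built on the nonlinear lower bound for $\Lambda^\gamma$, keeping track of how the constants depend on $\gamma$. Since $f\in L^\infty\cap H^1$ and the embedding $H^1\hookrightarrow H^{2-\gamma}$ holds uniformly in $\gamma\in(1,\gamma_0]$, the $L^\infty$ maximum principle for \eqref{eq:SQGgamma} yields a $\gamma$-uniform bound on $\|\theta(t)\|_{L^\infty}$ inside any absorbing set. The nonlinear lower bound constants and the Sobolev constants entering the bootstrap are continuous functions of $\gamma\in[1,\gamma_0]$, so nothing degenerates as $\gamma\to 1^+$; chaining the estimates yields a $\gamma$-uniform absorbing ball in $H^{2-\gamma/2}$. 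For the fractal dimension I would then apply the Constantin--Foias--Temam volume-contraction criterion: the differential $DS_\gamma(t)$ along trajectories on $A_\gamma$ admits a $\gamma$-uniform trace estimate by the previous step, while the fractional dissipation provides a coercive spectral gap that forces exponential contraction of $N$-volumes with $N$ depending only on $\gamma_0$ and $f$.

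Upper semicontinuity is then proved by contradiction. If \eqref{eq:stability} fails, there exist $\gamma_n\to 1^+$ and $\theta_n\in A_{\gamma_n}$ with $\dist_{H^1}(\theta_n,A_1)\geq\delta>0$. The uniform $H^{2-\gamma/2}$ bound and the compact embedding $H^{2-\gamma/2}\hookrightarrow H^1$, uniform since $2-\gamma/2\geq 2-\gamma_0/2>1$, give a subsequential limit $\theta_n\to\bar\theta$ in $H^1$. By invariance, each $\theta_n$ lies on a complete bounded orbit $\theta_n(\cdot)\subset A_{\gamma_n}$; an Aubin--Lions extraction on every compact time window yields a limit orbit $\bar\theta(\cdot)$ that one verifies is a bounded complete trajectory of the critical equation ($\gamma=1$). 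Such an orbit must be contained in $A_1$, so in particular $\bar\theta=\bar\theta(0)\in A_1$, contradicting $\dist_{H^1}(\bar\theta,A_1)\geq\delta$.

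The main obstacle is the passage to the limit in the dissipative and nonlinear terms as $\gamma_n\to 1^+$: one must justify $\Lambda^{\gamma_n}\theta_n\to\Lambda\bar\theta$ and $\u_n\cdot\nabla\theta_n\to\bar\u\cdot\nabla\bar\theta$ in a suitable distributional sense, which requires strong convergence of $\theta_n$ in $L^2_{\rm loc}(\R;H^1)$ together with continuity of $\gamma\mapsto\Lambda^\gamma$ on bounded subsets of $H^{2-\gamma/2}$. Both ingredients follow from the step-one estimates via interpolation, but matching the regularity across the critical threshold is the technically delicate point of the argument.
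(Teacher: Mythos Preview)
Your argument for the uniform $H^{2-\gamma/2}$ bound contains a genuine gap. The claim that ``nothing degenerates as $\gamma\to 1^+$'' is precisely where the proof fails: the Sobolev estimate obtained from the $L^\infty$ maximum principle and the nonlinear lower bound (Theorem~\ref{prop:sob}) produces a right-hand side of size $K_\infty^{4\gamma/(\gamma-1)}$, which blows up as $\gamma\to 1^+$. This exponent arises from the Young inequality used to close the estimate, with conjugate exponent $(1+\gamma)/(\gamma-1)$; it reflects the fact that the $L^\infty$ norm is \emph{scale-invariant} for the critical equation and hence only barely subcritical when $\gamma>1$. Consequently the $H^{2-\gamma}$ absorbing radius $R_{1,\gamma}$ of Theorem~\ref{thm:H1abs}, and with it the $H^{2-\gamma/2}$ radius $R_{2,\gamma}$, diverge as $\gamma\to 1^+$. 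The paper addresses this by first establishing a $\gamma$-uniform $C^\beta$ absorbing set (Proposition~\ref{thm:Cbeta}), with $\beta>0$ depending only on $\|f\|_{L^\infty}$; the H\"older control is strictly subcritical uniformly in $\gamma$, and the refined Sobolev inequality then yields a bound $K_\infty^{4\gamma/(\gamma+\beta-1)}$ that stays finite at $\gamma=1$. This extra H\"older step is the missing idea in your outline.

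For upper semicontinuity your compactness/Aubin--Lions scheme is a legitimate alternative to the paper's approach, and would succeed once the uniform bounds are correctly established. The paper instead proves a quantitative convergence estimate $\|S_\gamma(t)\theta_0-S_1(t)\theta_0\|_{H^1}\leq c(\gamma-1)$ for $\theta_0\in A_\gamma$ (Proposition~\ref{prop:conver}), via an energy estimate on the difference together with the elementary Fourier bound $\|(\Lambda^{\gamma-1}-I)\varphi\|_{H^m}\leq \frac{\gamma-1}{s}\|\varphi\|_{H^{m+s}}$, and then combines this with attraction of $A_1$ through a triangle-inequality splitting. Your route is softer and avoids the explicit difference estimate, but gives no rate; the paper's route is more quantitative but requires the additional Lemma~\ref{lem:gamma} and careful control of $\|\theta^\gamma\|_{H^2}$ in time. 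Either way, both arguments rest on the uniform $H^{2-\gamma/2}$ bound, so your step one must be repaired first.
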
 
Notice that the attractors $A_\gamma$
are slightly less regular than $A_1$, being attractors in the phase space $H^{2-\gamma}$, which strictly contains $H^1$.
They are nonetheless bounded in $H^{2-\gamma/2}$ (see Theorem \ref{thm:globalattra}) and  
it is essential to have bounds on $\|A_\gamma\|_{H^{2-\gamma/2}}$ that are independent of $\gamma$. The restriction to
$\gamma\in (1,\gamma_0]$ is solely due to the use of the integral representation of the fractional Laplacian, whose normalization
constant blows up as $\gamma\to 2^-$, while the assumption $f\in L^\infty\cap H^1$ is dictated by the results valid for the critical
SQG equation. Moreover, the uniform fractal dimension estimate improves that of \cite{WT13}, where an estimate which blows up as $\gamma\to 1^+$ was proved.

The analysis can actually be extended to weak solutions to show that the basin of attraction of $A_\gamma$
is the whole space $L^2$, modulo working with multivalued dynamical systems, due to the possible non-uniqueness 
of weak solutions.
Notice also that the assumptions on $f$ can be relaxed.

\begin{theorem}\label{thm:weakglobalattra}
Let $\gamma\in (1,2)$ be fixed, and assume that
$f\in L^\infty\cap H^{\gamma/2}$. The multivalued dynamical system $S_\gamma(t)$ 
generated by \eqref{eq:SQGgamma} 
on $L^2$ possesses a unique invariant global attractor $A_\gamma$, bounded in $H^{\gamma/2}$, and therefore compact in $L^2$. In particular,
\begin{equation}\label{eq:weakat}
\lim_{t\to \infty}\dist_{L^2}(S_\gamma(t)B,A_\gamma)=0,
\end{equation}
for every bounded set $B\subset L^2$. Furthermore, if $f\in H^{2-\gamma}$, $A_\gamma$ coincides with that
of Theorem \ref{thm:globalattra}.
\end{theorem}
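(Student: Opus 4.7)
The plan is to follow the standard construction of the global attractor, adapted to the multivalued framework of Melnik--Valero (needed because uniqueness of $L^2$ weak solutions is not available in this regime), and to exploit the subcritical smoothing already underlying Theorem \ref{thm:globalattra} to upgrade the regularity from $L^2$ to $H^{\gamma/2}$. First I would construct, for every $\theta_0\in L^2$, at least one weak solution $\theta\in L^\infty_{\rm loc}(\R_+;L^2)\cap L^2_{\rm loc}(\R_+;H^{\gamma/2})$ by Galerkin or vanishing viscosity, obeying the standard energy inequality. Defining $S_\gamma(t)\theta_0$ to be the set of values at time $t$ of all such solutions, the multivalued semigroup property and the closed-graph condition in $L^2$ follow by passing to the limit in sequences of weak solutions via Aubin--Lions compactness, using the $L^2_tH^{\gamma/2}$-regularity and the negative-order Sobolev bound on $\de_t\theta$ inherited from the equation.

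Poincar\'e combined with the energy inequality produces an $L^2$-absorbing ball and a uniform time-integrated control $\int_t^{t+1}\|\Lambda^{\gamma/2}\theta(\tau)\|_{L^2}^2\dd\tau\leq R$. The C\'ordoba--C\'ordoba maximum principle applied at the Galerkin level furnishes a uniform absorbing $L^\infty$ bound, which I would then feed into the $\Lambda^{\gamma/2}$-energy estimate
\[
\ddt\|\Lambda^{\gamma/2}\theta\|_{L^2}^2 + c\|\Lambda^{\gamma}\theta\|_{L^2}^2 \leq G(\|\theta\|_{L^\infty},\|f\|_{H^{\gamma/2}}),
\]
in which the critical nonlinear term is absorbed by the dissipation via the pointwise nonlinear lower bound for $\Lambda^\gamma$ (the same ingredient that drives Theorem \ref{thm:globalattra}, only stopped one order of differentiability earlier, which is what allows the weaker hypothesis $f\in H^{\gamma/2}$). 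A uniform Gronwall argument against the time-integrated bound then yields an absorbing ball in $H^{\gamma/2}$, and the compact embedding $H^{\gamma/2}\hookrightarrow L^2$ delivers the asymptotic compactness in $L^2$ required by the abstract attractor theorem for multivalued semigroups.

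Under the stronger hypothesis $f\in L^\infty\cap H^{2-\gamma}$, the final claim follows by a bootstrap on complete bounded trajectories. Every point of the $L^2$-attractor lies on such a trajectory, which by the above is bounded in $H^{\gamma/2}$ and then, iterating the higher-order energy estimate behind Theorem \ref{thm:globalattra}, bounded in $H^{2-\gamma/2}$. The $L^2$-attractor is therefore bounded in $H^{2-\gamma}$, $S_\gamma$-invariant, and attracts bounded subsets of $H^{2-\gamma}$ in the $H^{2-\gamma}$-topology (by interpolating the $L^2$-attraction against the $H^{2-\gamma/2}$-bound), which by minimality forces it to coincide with the attractor of Theorem \ref{thm:globalattra}. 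The step I expect to be hardest is the closed-graph verification in the multivalued setup: passing to the limit in $\u_n\cdot\nabla\theta_n$ for a sequence of weak solutions with initial data converging only in $L^2$ demands strong $L^2_tL^2$-compactness, and the parabolic $L^2_tH^{\gamma/2}$ bound together with equicontinuity in a negative Sobolev norm---available precisely because $\gamma>1$---is exactly what is needed to close this gap.
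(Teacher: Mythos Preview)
Your overall architecture matches the paper's proof in Section~\ref{sub:weakattr} closely: multivalued semigroup of viscosity solutions, energy inequality giving an $L^2$ absorbing ball with time-integrated $H^{\gamma/2}$ control, then an $L^\infty$ absorbing set, then the Sobolev estimate of Theorem~\ref{prop:sob} with $\alpha=\gamma/2$ combined with the uniform Gronwall lemma to produce an $H^{\gamma/2}$ absorbing set.

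There is, however, one genuine gap. You write that the C\'ordoba--C\'ordoba maximum principle at the Galerkin level furnishes a uniform absorbing $L^\infty$ bound. It does not: the C\'ordoba--C\'ordoba inequality yields the decay estimate \eqref{eq:expdecayLinf}, which presupposes $\theta_0\in L^\infty$. For data that are merely in $L^2$ you need a quantitative $L^2\to L^\infty$ regularization, uniform over $L^2$-bounded sets, and this is precisely what the paper imports from \cite{CD15} via a De~Giorgi iteration (Theorem~\ref{thm:absLinf}). Without this step the input $\|\theta\|_{L^\infty}$ in your displayed $H^{\gamma/2}$ inequality is unbounded and the argument stalls. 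A secondary issue is that the C\'ordoba--C\'ordoba pointwise inequality does not commute with spectral Galerkin truncation, so ``at the Galerkin level'' is also delicate; the paper avoids this by working with vanishing-viscosity solutions, for which the $L^\infty$ maximum principle and the De~Giorgi scheme are stable under the limit.

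For the last assertion the paper takes a slightly more direct route than your complete-bounded-trajectory argument: from the $H^{\gamma/2}$ estimate \eqref{eq:sob3} it reads off time-integrability of $\|\theta\|_{H^\gamma}$, uses $H^\gamma\subset H^{2-\gamma}$ (since $\gamma>1$), and then applies \eqref{eq:sob2} with the uniform Gronwall lemma to obtain an $H^{2-\gamma}$ absorbing set on which $S_\gamma(t)$ is single-valued, after which the bootstrap of Section~\ref{sub:regattr} finishes. Your interpolation approach would also work once the $L^\infty$ step is fixed, but the paper's version sidesteps the need to invoke the complete-trajectory characterization of the attractor.
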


In the statements above, $\dist_{X}$ stands for the Hausdorff semidistance in 
$X$ between sets, given by
$$
\dist_X(B,C)=\sup_{b\in B}\inf_{c\in C}\|b-c\|_X, \quad B,C\subset X.
$$
The asymptotic behavior of solutions to \eqref{eq:SQGgamma} in terms of attractors has been investigated by
several authors in recent times.  In the subcritical case $\gamma\in (1,2)$, the existence of a weak 
global attractor in $L^2$
was proved in \cite{B02}, that is, the existence of a weakly compact, weakly attracting set for which \eqref{eq:weakat}
is replaced by the distance induced by the weak $L^2$-metric on bounded sets. A strong (and smooth) attractor was later constructed in \cite{J05}, where the semigroup $S_\gamma(t)$ was considered on $H^s$, with $s>2-\gamma$,
a space above the scale-invariant regularity level.
(see \cite{J05}*{Theorem 5.1}). The main obstructions with working in the larger space 
$H^{2-\gamma}$ can be summarized as follows.

\medskip

\noindent $\diamond$ Scaling invariance: if $\theta(x,t)$ is a solution to \eqref{eq:SQGgamma} with datum $\theta_0(x)$,  
then $\theta_\lambda(x,t) = \lambda^{\gamma-1} \theta( \lambda x,\lambda^\gamma t)$ is a 
solution of \eqref{eq:SQGgamma} with initial datum $\theta_{0,\lambda}(x) = \lambda^{\gamma-1} \theta(\lambda x)$. 
Therefore $H^{2-\gamma}$ is scale-invariant, and thus the time of local existence of a solution arising from an initial datum $\theta_0 \in H^{2-\gamma}$ is not known to depend solely on $\|\theta_0\|_{H^{2-\gamma}}$, and a uniform 
regularization with respect to initial data cannot be obtained only by exploiting short-time parabolic regularization.

\medskip

\noindent $\diamond$ Maximum principles: while smooth solutions to \eqref{eq:SQGgamma} automatically satisfy an a priori
$L^\infty$ bound, our case necessitates a uniform (w.r.t to initial data) regularization from $L^2$ to $L^\infty$, reminiscent
of De Giorgi type iterations \cites{CV10a,CD14, CD15,Sil10a}, to obtain an $L^\infty$ absorbing set (cf.~Theorem~\ref{thm:absLinf}, proven in \cite{CD15}). 

\medskip

\noindent$\diamond$ Sobolev estimates: the proofs of Theorem \ref{thm:globalattra} and \ref{thm:weakglobalattra} rely on the existence of regular absorbing sets (i.e. bounded in higher order Sobolev spaces) for the dynamics of \eqref{eq:SQGgamma}. 
However, the scaling invariance of $H^{2-\gamma}$ does not allow the use of the commutator estimates used in \cite{J05}
(see Section \ref{sec:sob}), and a new approach based on pointwise lower bounds on the fractional laplacian 
\cites{CV12,CTV13} is required (cf. Theorem \ref{prop:sob}). Specifically, the subcritical nature of the 
$L^\infty$ control of Theorem~\ref{thm:absLinf} is used in a sharp way.

\medskip

\noindent$\diamond$ Uniform estimates with respect to $\gamma$:
by exploiting only the $L^\infty$ maximum principle, the radii of the absorbing sets inevitably blow up
as $\gamma\to 1^+$.
The reason for this is fairly easy to explain: due to the scale-invariance of the $L^\infty$ norm 
in the critical case ($\gamma=1$), the existence of an $H^1$ absorbing set for $S_1(t)$
requires the existence of a $C^\beta$ absorbing set, for some $\beta\in(0,1)$ small. Here, no $C^\beta$ estimate is needed
in principle, as the $L^\infty$ norm provides a strong enough control. By adapting  
the techniques of \cite{CCZV15}, we can also prove the existence of an absorbing set consisting of 
H\"older continuous functions (cf. Proposition \ref{thm:Cbeta}), thus leading to a better choice of absorbing sets in Sobolev spaces (cf. Theorem \ref{thm:H1absbeta}), at the cost of significantly more involved 
estimates.

\medskip

It is worth mentioning that similar results hold for the critical ($\gamma=1$) SQG equation
\cites{CD14, CCZV15, CZK15, CTV13}. In a sense, the approach here generalizes  all these results
to the case $\gamma\in [1,2)$, in view of the uniformity sought in $\gamma$ of the results above.

\subsection*{Organization of the paper} In Section \ref{sec:dynamo} we introduce the proper functional setting and state
a result on
the existence of an $L^\infty$ absorbing set, proved in \cite{CD15} . We then derive a new Sobolev estimate in Section 
\ref{sec:sob}, based on pointwise estimates on the evolution of finite differences, and prove the existence of a bounded
absorbing set in $H^{2-\gamma}$. The proofs of Theorems \ref{thm:globalattra} and \ref{thm:weakglobalattra} are carried
out in Section \ref{sec:glob}. Section \ref{sec:unifabs} is dedicated to the proof of the first part of Theorem \ref{thm:stability},
dealing with the uniformity of the estimates with respect to $\gamma$, while we leave the upper-semicontinuity of 
the attractors to Section \ref{sec:stable}.

\section{The subcritical SQG equation as a dynamical system}\label{sec:dynamo}
Let $\gamma\in (1,2)$ and assume that {$f\in L^\infty\cap H^{2-\gamma}$}. It follows from several works
\cites{CW99,Dong10,Ju07,R95} that 
for all initial data $\theta_0\in H^{2-\gamma}$ the initial value problem \eqref{eq:SQGgamma}
admits a unique  global solution
$$
\theta^\gamma\in C([0,\infty);H^{2-\gamma})\cap L^2_{loc}(0,\infty;H^{2-\gamma/2}).
$$
In other words, the solution operators
$$
S_\gamma(t):H^{2-\gamma}\to H^{2-\gamma}, \qquad t\geq 0,
$$
acting as
$$
\theta_0\mapsto S_\gamma(t)\theta_0=\theta^\gamma(t), \qquad  \forall t\geq0,
$$
are well-defined and, being the forcing term autonomous, they form a semigroup of operators. By standard arguments, 
it is not hard to see that $\theta^\gamma$ satisfies the energy inequality
\begin{equation}\label{eq:energyin}
\| \theta^\gamma(t)\|^2_{L^2}+\int_0^t \|\Lambda^{\gamma/2}\theta^\gamma(s)\|_{L^2}^2\dd s\leq \|\theta_0\|^2_{L^2}
+\frac{1}{\kappa}\|f\|^2_{L^2}t, \qquad \forall t\geq 0.
\end{equation}
and the decay estimate
\begin{equation}\label{eq:expdecayL2}
\|\theta^\gamma(t)\|_{L^2}\leq \|\theta_0\|_{L^2}\e^{-\kappa t}+\frac{1}{\kappa}\|f\|_{L^2}, \qquad \forall t\geq 0,
\end{equation}
where $\kappa\geq 1$ is a universal constant independent of $\gamma$. If furthermore $\theta_0\in L^\infty$, then cf.~\cites{CC04,CTV13} we have
\begin{equation}\label{eq:expdecayLinf}
\|\theta^\gamma(t)\|_{L^\infty}\leq \|\theta_0\|_{L^\infty}\e^{-\kappa t}+\frac{1}{\kappa}\|f\|_{L^\infty} , \qquad \forall t\geq 0.
\end{equation}
Since we consider mean-zero solutions to \eqref{eq:SQGgamma}, by the symbol $H^s$ we indicate the
the \emph{homogeneous} Sobolev space of order $s\in\R$, with norm $\|\cdot\|_{H^s}=\|\Lambda^s\cdot\|_{L^2}$.
As for the fractional laplacian, we will mainly use its representation as the singular integral 
\begin{align}\label{eq:fraclapc}
\Lambda^\gamma \theta(x)= c_\gamma \sum_{k \in \ZZ^2}  \int_{\TT}   \frac{\theta(x) - \theta(x+y)}{|y- 2\pi k|^{2+\gamma}} \dd y= c_\gamma \,\mathrm{P.V.}\int_{\R^2} \frac{\theta(x)-\theta(x+y)}{|y|^{2+\gamma}}\dd y, 
\end{align} 
abusing notation and  denoting by $\theta$ the periodic extension of $\theta$ to the whole space. 
The velocity vector field $\u$ in \eqref{eq:SQGgamma} is divergence-free and determined by $\theta$ through the relation
$$
\u=\RR^\perp \theta = \nabla^\perp\Lambda^{-1}\theta=(-\de_{x_2}\Lambda^{-1}\theta,\de_{x_1}\Lambda^{-1}\theta) =(-\RR_2\theta,\RR_1 \theta),
$$
where 
\begin{align*}
\RR_j \theta(x)&= \frac{1}{2\pi} \mathrm{P.V.} \int_{\TT} \frac{y_j}{|y|^3} \theta(x+y) \dd y + \sum_{k \in \ZZ^2_*} \int_{\TT} \left( \frac{y_j + 2 \pi k_j }{|y + 2\pi k |^3}  - \frac{2 \pi k_j }{|2\pi k |^3} \right) \theta(x+y) \dd y \\
&=\frac{1}{2\pi}\,\mathrm{P.V.}\int_{\R^2}\frac{y_j}{|y|^3}\theta(x+y)\dd y.
\end{align*}
In the last line the principal value is taken both as $|y| \to 0$ and $|y|\to \infty$.

\subsection*{Constants and notation}
Throughout the paper, $c$ will denote a \emph{generic} positive constant,  
whose value may change even in the same line of a certain equation. In the 
same spirit, $c_0,c_1,\ldots$ will denote fixed constants appearing in the course of proofs 
or estimates, which have to be referred to specifically. Unless explicitly mentioned, all these
constants will be \emph{independent} of $\gamma$. The dependence on $\gamma$ of any quantity 
will be emphasized only as $\gamma\to 1^+$, while we will not worry about the case $\gamma\to 2^-$,
for which some constants are not well behaved (the constant $c_\gamma$ in \eqref{eq:fraclapc} is the only instance
of this behavior).

\subsection{$L^\infty$ absorbing sets}
Recall that a set $B_0$ 
is \emph{absorbing} if for every bounded set $B\subset H^{2-\gamma}$ there exists $t_B>0$ such that
$$
S_\gamma(t)B\subset B_0, \qquad \forall t\geq t_B.
$$
The following theorem was proved in \cite{CD15}.

\begin{theorem}\label{thm:absLinf}
Let
$$
R_\infty=\frac{2}{\kappa}\|f\|_{L^\infty}.
$$
The set
$$
B^\gamma_{\infty}=\left\{\varphi\in L^\infty\cap H^{2-\gamma}: 
\|\varphi\|_{L^\infty}\leq R_\infty\right\}
$$
is an absorbing set for $S_\gamma(t)$. Moreover, 
\begin{equation}\label{eq:estimateLinf}
\sup_{t\geq 0}\sup_{\theta_0\in B^\gamma_\infty}\|S_\gamma(t)\theta_0\|_{L^\infty}\leq 2R_\infty.
\end{equation}
\end{theorem}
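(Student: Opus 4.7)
The plan is to prove two things in sequence: that $B^\gamma_\infty$ is forward invariant in the quantitative form \eqref{eq:estimateLinf}, and that every bounded set $B\subset H^{2-\gamma}$ is pushed into $B^\gamma_\infty$ in finite time. The first is automatic from the $L^\infty$ maximum principle \eqref{eq:expdecayLinf}: for $\theta_0\in B^\gamma_\infty$ one has $\|\theta_0\|_{L^\infty}\leq R_\infty$, whence
\begin{equation*}
\|S_\gamma(t)\theta_0\|_{L^\infty}\leq R_\infty\,\e^{-\kappa t}+\frac{1}{\kappa}\|f\|_{L^\infty}= R_\infty\,\e^{-\kappa t}+\frac{R_\infty}{2}\leq \tfrac{3}{2}R_\infty\leq 2R_\infty,
\end{equation*}
which is \eqref{eq:estimateLinf}. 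Moreover, once an a priori bound $\|\theta(t_0)\|_{L^\infty}\leq M$ is available, another application of \eqref{eq:expdecayLinf} drives the solution into $B^\gamma_\infty$ after an additional waiting time of order $\kappa^{-1}\log(M/R_\infty)_+$. The absorbing property therefore reduces to showing that any solution with data in $H^{2-\gamma}$ enters $L^\infty$ in finite time, with an entry norm depending only on $\|f\|$ and on the initial $L^2$ norm, which is in turn controlled by \eqref{eq:expdecayL2}.

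To achieve this $L^2\to L^\infty$ regularization, I would use a De Giorgi level-set iteration, in the style of \cites{CV10a,CD14,Sil10a}. For a threshold $M>0$, set $\theta_M=(\theta-M)_+$. Testing \eqref{eq:SQGgamma} against $\theta_M$, using the divergence-free cancellation together with a Córdoba--Córdoba inequality of the form $\int\theta_M\Lambda^\gamma\theta\,\dd x\geq \|\Lambda^{\gamma/2}\theta_M\|_{L^2}^2$ valid for truncations, yields
\begin{equation*}
\tfrac{1}{2}\ddt\|\theta_M\|_{L^2}^2+\|\Lambda^{\gamma/2}\theta_M\|_{L^2}^2\leq \int f\,\theta_M\,\dd x.
\end{equation*}
Choosing increasing levels $M_k\uparrow M_\infty$ and a shrinking sequence of time intervals $I_k$, and combining Chebyshev's inequality for the level set $\{\theta_{M_k}>M_{k+1}-M_k\}$ with the Sobolev embedding $\dot H^{\gamma/2}(\TT)\hookrightarrow L^{4/(2-\gamma)}$, one derives a nonlinear recursion of the form $U_{k+1}\leq C_0^k U_k^{1+\delta}$ for the energies
\begin{equation*}
U_k=\sup_{t\in I_k}\|\theta_{M_k}(t)\|_{L^2}^2+\int_{I_k}\|\Lambda^{\gamma/2}\theta_{M_k}(t)\|_{L^2}^2\,\dd t.
\end{equation*}
The subcritical condition $\gamma>1$ makes the Sobolev gain strictly superlinear ($\delta>0$); choosing $M_\infty$ large enough (in terms of $\|\theta_0\|_{L^2}$ and $\|f\|_{L^\infty}$) forces $U_0$ to be small, and the recursion then delivers $U_k\to 0$, yielding $\|\theta(t)\|_{L^\infty}\leq M_\infty$ for all $t$ in the limiting interval.

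The main technical obstacle is the nonlocality of $\Lambda^\gamma$: neither the chain rule nor the standard truncation identities hold, and the Córdoba--Córdoba-type bound for $(\theta-M)_+$ must be verified with care, tracking constants that remain well behaved uniformly in $\gamma$ on compact subsets of $(1,2)$. A secondary subtlety is that the threshold $M_\infty$ produced by the iteration depends on $\|\theta_0\|_{L^2}$, so one first waits until \eqref{eq:expdecayL2} places the $L^2$ norm inside a ball of radius comparable to $\|f\|_{L^2}/\kappa$; only from that moment on is the $L^\infty$ bound yielded by De Giorgi a function of $f$ alone, as demanded by the absorbing set property. With those two steps in place, a final application of \eqref{eq:expdecayLinf} pushes the solution inside $B^\gamma_\infty$.
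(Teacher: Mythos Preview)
Your proposal is correct and follows essentially the same approach that the paper describes: the paper does not prove this result in full but cites \cite{CD15}*{Lemma 4.2}, summarizing the argument as the dissipative $L^2$ estimate \eqref{eq:expdecayL2} combined with a De Giorgi iteration yielding control of $\|\theta(t)\|_{L^\infty}$ in terms of $\|\theta_0\|_{L^2}$ and the forcing, which is exactly the skeleton you lay out. One minor remark: the Sobolev gain $\dot H^{\gamma/2}\hookrightarrow L^{4/(2-\gamma)}$ is already strictly superlinear for any $\gamma>0$, so the De Giorgi recursion closes without invoking subcriticality; the role of $\gamma>1$ in this paper lies elsewhere (in the Sobolev estimates of Section~\ref{sec:sob}), not in the $L^2\to L^\infty$ step.
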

The idea of the proof relies on the dissipative estimate \eqref{eq:expdecayL2} and an appropriate De Giorgi type iteration
scheme, and it is carried out in details in \cite{CD15}*{Lemma 4.2} (see also \cites{CV10a, CD14}). In particular,
it is crucial that the $L^\infty$ norm of the solution at any positive time is controlled by the $L^2$ norm of the initial datum
and the forcing. 

\begin{remark}
The assumptions on $f$ can in fact be relaxed to $f\in L^2$ at this stage, at the cost of introducing a dependence on
$\gamma$ in the expression of $R_\infty$ above. In this way, the radius of the $L^\infty$ absorbing set would diverge
as $\gamma\to1^+$.
\end{remark}

\section{Sobolev estimates via nonlinear lower bounds}\label{sec:sob}
The main goal of this section is to establish a proper dissipative estimate
of $S_\gamma(t)$ in the phase space norm $\|\cdot\|_{H^{2-\gamma}}$.
This is not at all trivial. Indeed, testing equation \eqref{eq:SQGgamma} with
$\theta$ in $H^{2-\gamma}$ and using commutator estimates (see e.g. \cite{Ju04}), we 
arrive at a differential inequality of the form
\begin{equation}\label{eq:sad}
\ddt \|\theta\|_{H^{2-\gamma}}^2 +\|\theta\|^2_{H^{2-\gamma/2}}\leq 
c\|\theta\|_{H^{2-\gamma}}\|\theta\|_{H^{2-\gamma/2}}^2+ c\|f\|^2_{H^{2-\gamma}},
\end{equation}
which does not yield any proper dissipative estimate for $\|\theta\|_{H^{2-\gamma}}$. To overcome this difficulty, 
we first proceed by pointwise estimates in the spirit of \cites{CCZV15,CZV14,CTV13,CV12}, in order to be able to exploit
the available nonlinear lower bounds for fractional diffusion operators. The main result of this section can be phrased 
as follows.

\begin{theorem}\label{prop:sob}
Let $\theta_0\in L^\infty$, $f\in L^\infty\cap H^\alpha$, $\gamma \in(1,2)$ and $\alpha\in (0,1)$. Then the differential inequality
\begin{equation}\label{eq:sob}
\ddt\|\theta\|^2_{H^\alpha}+ \frac14\|\theta\|^2_{H^{\alpha+\gamma/2}}\leq c\left[\|\theta_0\|_{L^\infty}+\frac{1}{\kappa}\|f\|_{L^\infty}\right]^{\frac{4\gamma}{\gamma-1}}+  
c\|f\|_{H^{\alpha}}^2
\end{equation}
holds for every $t>0$, with $c>0$ independent of $\gamma$.
\end{theorem}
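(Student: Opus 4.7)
The plan is to establish \eqref{eq:sob} through a pointwise analysis of finite differences in the spirit of \cites{CV12,CTV13,CCZV15}, combined with the Constantin--Vicol nonlinear pointwise lower bound on $\Lambda^\gamma$. For $h\in\R^2$, set $\delta_h\theta(x):=\theta(x+h)-\theta(x)$ and use the Gagliardo representation
\begin{equation*}
\|\theta\|_{H^\alpha}^2 = c_\alpha \int_{\TT}\int_{\R^2} \frac{|\delta_h\theta(x)|^2}{|h|^{2+2\alpha}}\,\dd h\,\dd x.
\end{equation*}
Differentiating in time using the finite-difference form of \eqref{eq:SQGgamma},
\begin{equation*}
\de_t\delta_h\theta + \u(x+h)\cdot\nabla_x\delta_h\theta + \delta_h\u\cdot\nabla\theta(x) + \Lambda^\gamma\delta_h\theta = \delta_h f,
\end{equation*}
and noticing that the transport contribution vanishes after $x$-integration (via $\nabla\cdot\u=0$) while the linear dissipation yields $2\|\theta\|_{H^{\alpha+\gamma/2}}^2$ after using $\Lambda^{\gamma/2}\delta_h=\delta_h\Lambda^{\gamma/2}$ and the Gagliardo formula, I obtain
\begin{equation*}
\ddt\|\theta\|_{H^\alpha}^2 + 2\|\theta\|_{H^{\alpha+\gamma/2}}^2 = -2c_\alpha\,\mathcal{N} + 2c_\alpha\,\mathcal{F},
\end{equation*}
with
$\mathcal{N}:= \int_{\TT}\int_{\R^2}\frac{\delta_h\theta\,\delta_h\u\cdot\nabla\theta(x)}{|h|^{2+2\alpha}}\dd h\dd x$ and $\mathcal{F}:=\int_{\TT}\int_{\R^2}\frac{\delta_h\theta\,\delta_h f}{|h|^{2+2\alpha}}\dd h\dd x$.

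Next I would apply the Constantin--Vicol pointwise lower bound
\begin{equation*}
2\delta_h\theta\,\Lambda^\gamma\delta_h\theta \geq \Lambda^\gamma|\delta_h\theta|^2 + c_0\,\frac{|\delta_h\theta|^{2+\gamma}}{\|\theta\|_{L^\infty}^\gamma},
\end{equation*}
which is admissible since $\|\delta_h\theta\|_{L^\infty}\leq 2\|\theta\|_{L^\infty}$. Writing the dissipation as a convex combination of its exact Plancherel value and this pointwise lower bound, and noting that $\int_{\TT}\Lambda^\gamma|\delta_h\theta|^2\dd x=0$, unlocks an extra coercive \emph{nonlinear dissipation}
\begin{equation*}
\mathcal{D}_{\mathrm{nl}}(\theta) := \int_{\TT}\int_{\R^2}\frac{|\delta_h\theta|^{2+\gamma}}{\|\theta\|_{L^\infty}^\gamma\,|h|^{2+2\alpha}}\,\dd h\,\dd x
\end{equation*}
on the left-hand side, at the price of slightly reducing the coefficient in front of $\|\theta\|_{H^{\alpha+\gamma/2}}^2$. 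The forcing $\mathcal{F}$ is routine: Cauchy--Schwarz in $(x,h)$ and Young's inequality, together with the Poincar\'e estimate $\|\theta\|_{H^\alpha}\leq c\|\theta\|_{H^{\alpha+\gamma/2}}$ for mean-zero $\theta$, absorb it into the dissipation and contribute at most $c\|f\|_{H^\alpha}^2$.

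The main obstacle is the interpolation estimate for $\mathcal{N}$, which I expect to have the form
\begin{equation*}
2c_\alpha|\mathcal{N}| \leq \tfrac14 \|\theta\|_{H^{\alpha+\gamma/2}}^2 + \tfrac12\mathcal{D}_{\mathrm{nl}}(\theta) + C\|\theta\|_{L^\infty}^{\frac{4\gamma}{\gamma-1}}.
\end{equation*}
My plan for this is to first integrate by parts in $x$ using $\nabla\cdot\u=0$---trading $\nabla\theta$ for $\nabla\delta_h\theta$ paired against a leftover $\theta$ controlled in $L^\infty$---and then distribute the resulting product among the two coercive quantities on the left-hand side via Young's inequality with carefully tuned exponents. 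The quadratic dissipation controls $\theta$ in $H^{\alpha+\gamma/2}$, while $\mathcal{D}_{\mathrm{nl}}$ controls an $L^{2+\gamma}$-type Besov seminorm of $\delta_h\theta$ modulated by $\|\theta\|_{L^\infty}^{-\gamma}$. Matching exponents in this Young's step is what forces the residual $L^\infty$ power to equal precisely $\tfrac{4\gamma}{\gamma-1}$, a quantity that blows up as $\gamma\to 1^+$ and witnesses the criticality of the endpoint. The sharpness here is the ``sharp use of the subcritical nature of the $L^\infty$ bounds'' alluded to in the introduction.

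Putting all ingredients together, and invoking the $L^\infty$ maximum principle \eqref{eq:expdecayLinf} to replace $\|\theta(t)\|_{L^\infty}$ uniformly in $t\geq 0$ by $\|\theta_0\|_{L^\infty}+\kappa^{-1}\|f\|_{L^\infty}$, I would obtain \eqref{eq:sob}.
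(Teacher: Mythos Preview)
There is a real gap in your handling of $\mathcal{N}$, and the pointwise lower bound you invoke is both misjustified and too weak. After your integration by parts in $x$ you are left with
\[
\mathcal{N}=-\int_{\TT}\int_{\R^2}\frac{\theta(x)\,\delta_h\u\cdot\nabla_x\delta_h\theta}{|h|^{2+2\alpha}}\,\dd h\,\dd x,
\]
and $\nabla_x\delta_h\theta=\delta_h\nabla\theta$ is still a full $x$--gradient. Neither $\|\theta\|_{H^{\alpha+\gamma/2}}^2$ nor your $\mathcal{D}_{\mathrm{nl}}$ controls a full derivative: the former sees $\alpha+\gamma/2<\alpha+1$ derivatives, and your $\mathcal{D}_{\mathrm{nl}}$ is an $L^{2+\gamma}$--Besov seminorm of index only $2\alpha/(2+\gamma)<\alpha$. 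Any purely $L^2$--based splitting here collapses back to an estimate of the type $|\mathcal{N}|\le cK_\infty\|\theta\|_{H^{\alpha+\gamma/2}}^2$, which is exactly \eqref{eq:sad} and does not close. Separately, the inequality $D_\gamma[\delta_h\theta]\ge c_0|\delta_h\theta|^{2+\gamma}/\|\theta\|_{L^\infty}^\gamma$ does \emph{not} follow from $\|\delta_h\theta\|_{L^\infty}\le 2\|\theta\|_{L^\infty}$: there is no generic bound $D_\gamma[\varphi]\ge c|\varphi|^{2+\gamma}/\|\varphi\|_{L^\infty}^\gamma$ (take $\varphi$ locally constant). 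One must exploit the finite-difference structure, shifting $\delta_{-h}$ onto the kernel to gain a factor $|h|$ in the tail; the sharp result (Lemma~\ref{lem:nonlinbdd}) is
\[
D_\gamma[\delta_h\theta](x)\ \ge\ \frac{c\,|\delta_h\theta(x)|^{2+\gamma}}{|h|^{\gamma}\,\|\theta\|_{L^\infty}^{\gamma}},
\]
and the extra $|h|^{-\gamma}$ is not decorative---it is what makes the bookkeeping in $|h|$ close.

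The paper avoids $\nabla\theta$ altogether by writing the drift as $\u\cdot\nabla_x+(\delta_h\u)\cdot\nabla_h$ and deriving a \emph{pointwise} inequality for $v^2=(\delta_h\theta)^2/|h|^{2+2\alpha}$: the $\nabla_h$ lands on the weight and produces only $|h|^{-1}(h\cdot\delta_h\u)\,v^2$. Then $|\delta_h\u(x)|$ is bounded pointwise (Lemma~\ref{lem:rieszbdd}) by $c\big[r^{\gamma/2}D_\gamma[\delta_h\theta]^{1/2}+|h|\|\theta\|_{L^\infty}/r\big]$ for $r\ge 4|h|$; the first piece is absorbed into $D_\gamma[\delta_h\theta]/|h|^{2+2\alpha}$ by Cauchy--Schwarz, $r$ is optimized, and the residual is matched in $|h|$--homogeneity against the improved lower bound above via Young's inequality. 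This pointwise balance is precisely where the exponent $4\gamma/(\gamma-1)$ appears. Only \emph{after} the pointwise absorption does one integrate in $x$ and $h$. Your proposal misses both the $\nabla_h$ mechanism and Lemma~\ref{lem:rieszbdd}, and without them the estimate cannot be closed.
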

In the case $\alpha=2-\gamma$, the  improvement of \eqref{eq:sob} with respect to \eqref{eq:sad} is dramatic, since
we now have
\begin{equation}\label{eq:sob2}
\ddt\|\theta\|^2_{H^{2-\gamma}}+ \frac14\|\theta\|^2_{H^{2-\gamma/2}}\leq c\left[\|\theta_0\|_{L^\infty}+\frac{1}{\kappa}\|f\|_{L^\infty}\right]^{\frac{4\gamma}{\gamma-1}}+  
c\|f\|_{H^{2-\gamma}}^2.
\end{equation}
The above estimate makes the scale-invariant space $H^{2-\gamma}$ treatable.
Before proceeding to the proof, postponed in Section \ref{sub:sob}, we discuss in the next section an important
consequence of the above inequality.

\subsection{Absorbing sets in scale-invariant spaces}
From estimate \eqref{eq:sob2} and the standard Gronwall lemma, we infer that
\begin{equation}\label{eq:dissipest}
\|S_\gamma(t)\theta_0\|^2_{H^{2-\gamma}}\leq \|\theta_0\|^2_{H^{2-\gamma}}\e^{-\nu t}+c\left[\|\theta_0\|_{L^\infty}+\frac{1}{\kappa}\|f\|_{L^\infty}\right]^{\frac{4\gamma}{\gamma-1}}+  
c\|f\|_{H^{2-\gamma}}^2,
\end{equation}
where $\nu>0$ depends on the Poincar\'e constant and can clearly be made independent of $\gamma\in (1,2)$.
In particular, due to the existence of an $L^\infty$ absorbing set (Theorem \eqref{thm:absLinf}), the
existence of an $H^{2-\gamma}$ absorbing set follows immediately.

\begin{theorem}\label{thm:H1abs}
The set
\begin{equation*}
B_1^\gamma=\left\{\varphi\in H^{2-\gamma}: \|\varphi\|_{H^{2-\gamma}}\leq R_{1,\gamma}\right\},
\end{equation*}
with
$$
R_{1,\gamma}^2=c\left[2R_\infty\right]^{\frac{4\gamma}{\gamma-1}}+  
c\|f\|_{H^{2-\gamma}}^2,
$$
is absorbing for $S_\gamma(t)$. Moreover, 
\begin{equation}\label{eq:H1unif}
\sup_{t\geq 0}\sup_{\theta_0\in B^\gamma_1}\left[\|S_\gamma(t)\theta_0\|^2_{H^{2-\gamma}}+\int_t^{t+1}\|S_\gamma(\tau)\theta_0\|^2_{H^{2-\gamma/2}}\dd \tau\right]\leq 
2 R_{1,\gamma}^2.
\end{equation}
\end{theorem}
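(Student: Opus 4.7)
The plan is to read Theorem \ref{thm:H1abs} directly off the dissipative estimate \eqref{eq:dissipest} (already derived in the text via Poincar\'e and Gronwall applied to \eqref{eq:sob2}) combined with the $L^\infty$ absorbing property of Theorem \ref{thm:absLinf}. The key observation is that the right-hand side of \eqref{eq:dissipest} depends on the initial datum only through the exponentially decaying term $\|\theta_0\|_{H^{2-\gamma}}^2\e^{-\nu t}$ and through $\|\theta_0\|_{L^\infty}$, both of which become uniformly controlled once the orbit enters $B^\gamma_\infty$.

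For the absorbing property I would argue as follows: given a bounded set $B\subset H^{2-\gamma}$, Theorem \ref{thm:absLinf} produces $t_B>0$ such that $\|S_\gamma(t)\theta_0\|_{L^\infty}\leq 2R_\infty$ for all $\theta_0\in B$ and $t\geq t_B$. Applying \eqref{eq:dissipest} to the shifted trajectory $\tau\mapsto S_\gamma(t_B+\tau)\theta_0$, the $L^\infty$ factor on the right-hand side becomes universally $2R_\infty$, while $\|S_\gamma(t_B)\theta_0\|_{H^{2-\gamma}}^2$ is finite by virtue of the continuity $\theta\in C([0,\infty);H^{2-\gamma})$ stated at the start of Section \ref{sec:dynamo}. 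Waiting an additional time (depending on $B$) drives the decaying term below $R_{1,\gamma}^2$, which proves $S_\gamma(t)B\subset B^\gamma_1$ for all $t$ sufficiently large.

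The uniform bound \eqref{eq:H1unif} is then the same Gronwall estimate specialized to $\theta_0\in B^\gamma_1\cap B^\gamma_\infty$: the $L^\infty$-invariance \eqref{eq:estimateLinf} guarantees $\|S_\gamma(t)\theta_0\|_{L^\infty}\leq 2R_\infty$ for every $t\geq 0$, so \eqref{eq:dissipest} yields
\begin{equation*}
\|S_\gamma(t)\theta_0\|^2_{H^{2-\gamma}}\leq R_{1,\gamma}^2\e^{-\nu t}+R_{1,\gamma}^2\leq 2R_{1,\gamma}^2.
\end{equation*}
The time-integral portion of \eqref{eq:H1unif} I would obtain by integrating \eqref{eq:sob2} on the interval $[t,t+1]$: the telescoping term is controlled by the previous line, while the right-hand side contribution can be absorbed into $R_{1,\gamma}^2$ (possibly enlarging it by a harmless universal factor).

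The main obstacle here is really conceptual rather than technical: the set $B^\gamma_1$ as written does not encode the $L^\infty$ constraint needed to invoke \eqref{eq:dissipest}, so one must interpret the supremum in \eqref{eq:H1unif} implicitly over $B^\gamma_1\cap B^\gamma_\infty$. This is harmless in the theory of absorbing sets since every orbit enters $B^\gamma_\infty$ in finite time by Theorem \ref{thm:absLinf}, so the intersection captures the asymptotic dynamics. All the substantive difficulty of this theorem has been packed into the nonlinear lower-bound analysis underlying Theorem \ref{prop:sob}.
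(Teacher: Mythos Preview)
Your proposal is correct and follows essentially the same approach as the paper: derive the absorbing property from \eqref{eq:dissipest} together with the $L^\infty$ absorbing set of Theorem~\ref{thm:absLinf}, and obtain \eqref{eq:H1unif} by integrating \eqref{eq:sob2} over $(t,t+1)$. Your remark about the implicit intersection with $B^\gamma_\infty$ is well taken; the paper indeed glosses over this point, treating the $L^\infty$ control as already in force once orbits have entered $B^\gamma_\infty$.
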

Estimate \eqref{eq:H1unif} is derived by choosing an initial datum $\theta_0\in  B_1^\gamma$, integrating on $(t,t+1)$ inequality \eqref{eq:sob2} and exploiting the bound \eqref{eq:dissipest}. We discuss the optimality (rather, the non-optimality) of 
the radius $R_{1,\gamma}$ in Section \ref{sec:unifabs}.

\begin{remark}
In \cite{CTV13}, an estimate of similar flavor was derived in the case $\gamma=\alpha=1$ by considering the 
evolution of $\nabla \theta$ and exploiting H\"older bounds. The approach here is somewhat different, for two
main reasons linked to the nonlocal nature of $\Lambda$: firstly, the evolution of $\Lambda^\alpha\theta$ 
is not as nice, as Leibniz differentiation does not hold anymore; secondly, the pointwise nonlinear lower bounds
hold for $\nabla \theta$, but it is not clear whether they hold for $\Lambda^\alpha\theta$ or not. We refer
to \cite{CZK15} for an estimate involving H\"older norms.
\end{remark}

\subsection{A general Sobolev estimate}\label{sub:sob}
For convenience, in the course of this section we will set
\begin{equation}\label{eq:globalLinf}
K_\infty=\|\theta_0\|_{L^\infty}+\frac{1}{\kappa}\|f\|_{L^\infty},
\end{equation}
so that in view of \eqref{eq:expdecayLinf} the solution originating from $\theta_0$ satisfies the global bound
\begin{equation}\label{eq:globalLinf2}
\|\theta(t)\|_{L^\infty}\leq K_\infty, \qquad \forall t\geq 0.
\end{equation}
Consider  the finite difference
\begin{align*}
\delta_h\theta(x,t)=\theta(x+h,t)-\theta(x,t),
\end{align*}
which is periodic in both $x$ and $h$, where  $x,h \in \TT$.  In turn, 
\begin{align}\label{eq:findiff0}
L (\delta_h\theta)=\delta_h f,
\end{align}
where $L$ denotes the differential operator
$$
L=\de_t+\u\cdot \nabla_x+(\delta_h\u)\cdot \nabla_h+ \Lambda^\gamma.
$$
From \eqref{eq:findiff0}, we use the formula (see \cite{CC04})
\begin{align*}
2\varphi(x) \Lambda^\gamma \varphi(x)=\Lambda^\gamma \big(\varphi(x)^2\big)+D_\gamma[\varphi](x), 
\end{align*}
valid for $\gamma\in (0,2)$ and $\varphi\in C^\infty(\TT)$, and with
\begin{equation}
\label{eq:D:gamma:def}
D_\gamma[\varphi](x)= c_\gamma \int_{\R^2} \frac{\big[\varphi(x)-\varphi(x+y)\big]^2}{|y|^{2+\gamma}}\dd y.
\end{equation}
We then arrive at
\begin{equation}\label{eq:findiff}
L (\delta_h\theta)^2+ D_\gamma[\delta_h\theta]=2(\delta_h f) (\delta_h\theta).
\end{equation}
For an arbitrary $\alpha\in (0,1)$,
we study the evolution of the quantity $v(x,t;h)$ defined by
$$
v(x,t;h) =\frac{\delta_h\theta(x,t)}{|h|^{1+\alpha}}.
$$
Notice that $v$ is very much related to the usual fractional Sobolev norms, in the sense that
$$
\|\theta(t)\|^2_{H^\alpha}=\int_{\R^2}\int_{\R^2}\big[v(x,t;h)\big]^2\dd h\, \dd x= 
\int_{\R^2}\int_{\R^2}\frac{\big[\theta(x+h,t)-\theta(x,t)\big]^2}{|h|^{2+2\alpha}}\dd h\, \dd x.
$$
From \eqref{eq:findiff}, we deduce that
\begin{align}
L v^2+\frac{ D_\gamma[\delta_h\theta] }{|h|^{2+2\alpha}}
&=-2(1+\alpha) \frac{h}{|h|^2}\cdot \delta_h\u \, v^2+
2\frac{(\delta_hf)(\delta_h\theta)}{|h|^{2+2\alpha}}, \label{eq:ineq1}
\end{align}
with $\delta_h\u= \RR^\perp \delta_h\theta$, namely, the perpendicular Riesz transform of the scalar $\delta_h\theta$. 
We now estimate the dissipative 
term $D_\gamma[\delta_h\theta]$ from below and the drift term containing $\delta_h\u$
from above. The proofs of the next Lemmas are very similar to those contained in  \cites{CV12, CTV13,CZV14}, but we 
report them here for the sake of completeness.

\begin{lemma}\label{lem:nonlinbdd}
There exists a positive constant
$\tilde{c}_\gamma$ such that
$$
D_\gamma[\delta_h\theta](x,t)\geq \tilde{c}_\gamma \frac{|\delta_h\theta(x,t)|^{2+\gamma}}{|h|^\gamma\|\theta(t)\|^\gamma_{L^\infty}}
$$
holds for any $x,h\in \TT$ and any $t\geq 0$.
\end{lemma}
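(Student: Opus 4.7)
The plan is to reduce the claimed pointwise estimate to the standard Constantin--Vicol nonlinear lower bound for the fractional dissipation, applied to a suitably rescaled version of the finite difference; the rescaling is exactly what produces the $|h|^\gamma$ factor in the denominator, while the elementary bound $\|\delta_h\theta(\cdot,t)\|_{L^\infty}\leq 2\|\theta(t)\|_{L^\infty}$ takes care of converting $L^\infty$-norms.

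First, I would perform the change of variable $y=|h|z$ in the integral defining $D_\gamma$. Using $\dd y=|h|^2\,\dd z$ and $|y|^{2+\gamma}=|h|^{2+\gamma}|z|^{2+\gamma}$, a direct computation yields the exact identity
\begin{equation*}
D_\gamma[\delta_h\theta](x,t)\;=\;\frac{1}{|h|^\gamma}\,D_\gamma[\psi](0),\qquad \psi(z):=\delta_h\theta(x+|h|z,t),
\end{equation*}
where $\psi$ is understood on $\R^2$ via periodic extension. In particular $\psi(0)=\delta_h\theta(x,t)$, and $\|\psi\|_{L^\infty(\R^2)}=\|\delta_h\theta(\cdot,t)\|_{L^\infty(\TT)}\leq 2\|\theta(t)\|_{L^\infty}$, so the scale $|h|$ has been completely extracted from the integrand.

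Next I would apply the Constantin--Vicol pointwise lower bound to $\psi$ at the origin, which in two dimensions takes the form
\begin{equation*}
D_\gamma[\psi](0)\;\geq\; c\,\frac{|\psi(0)|^{2+\gamma}}{\|\psi\|_{L^\infty}^\gamma},
\end{equation*}
for some constant $c>0$ depending only on $\gamma$. Substituting back $\psi(0)=\delta_h\theta(x,t)$ and $\|\psi\|_{L^\infty}\leq 2\|\theta(t)\|_{L^\infty}$, combined with the scaling identity, gives the claim with $\tilde c_\gamma=c\,2^{-\gamma}$.

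The main obstacle is the Constantin--Vicol inequality itself, which is a genuinely nonlinear singular-integral estimate. Its proof proceeds by restricting the defining integral of $D_\gamma$ to an exterior region $\{|z|>R\}$, expanding $(\psi(0)-\psi(z))^2=\psi(0)^2-2\psi(0)\psi(z)+\psi(z)^2$, using Cauchy--Schwarz to control the cross term against the quadratic one, and then choosing $R$ as a function of the ratio $|\psi(0)|/\|\psi\|_{L^\infty}$ so that the resulting lower bound is optimal. This is the delicate step, but it is carried out in detail in \cite{CV12} and reproduced in \cite{CTV13,CZV14}, so I would simply reproduce it here following those references, exactly as announced in the paragraph preceding the Lemma.
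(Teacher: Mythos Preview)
Your scaling identity is correct, and the idea of reducing to a unit-scale problem is natural. The genuine gap is in the second step: the inequality
\[
D_\gamma[\psi](0)\;\geq\; c\,\frac{|\psi(0)|^{2+\gamma}}{\|\psi\|_{L^\infty}^\gamma}
\]
is \emph{false} for arbitrary bounded functions $\psi$ on $\R^2$ (take $\psi\equiv 1$: the left side vanishes while the right side equals $1$). More to the point, it fails precisely for your $\psi$: if $\theta$ is smooth and $|h|$ is small, then $\psi(z)=\delta_h\theta(x+|h|z)$ is nearly constant on bounded $z$-regions, $|\psi(0)|\sim\|\psi\|_{L^\infty}\sim|h|$, so the right side is $\sim|h|^2$, whereas $D_\gamma[\psi](0)=|h|^\gamma D_\gamma[\delta_h\theta](x)\sim|h|^{2+\gamma}$. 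Hence no constant $c$ independent of $|h|$ can work. Your sketch of the proof (``Cauchy--Schwarz to control the cross term against the quadratic one'') does not produce the extra decay in $R$ needed to optimize; without additional structure on $\psi$ the two competing terms scale identically in $R$.

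What the Constantin--Vicol argument actually uses, and what the paper exploits, is that $\psi=\delta_e\tilde\theta$ is a \emph{finite difference} of a bounded function $\tilde\theta$ (here $\tilde\theta(z)=\theta(x+|h|z)$, $e=h/|h|$). This lets one transfer $\delta_{-e}$ onto the kernel in the cross term and gain a factor of $|e|/r$ via the mean value theorem, yielding
\[
D_\gamma[\psi](0)\;\geq\;\frac{c_\gamma}{r^\gamma}|\psi(0)|^2-\frac{c}{r^{1+\gamma}}|\psi(0)|\,\|\tilde\theta\|_{L^\infty},\qquad r\geq 4,
\]
after which optimization in $r$ gives $D_\gamma[\psi](0)\geq c|\psi(0)|^{2+\gamma}/\|\tilde\theta\|_{L^\infty}^\gamma$, with $\|\tilde\theta\|_{L^\infty}=\|\theta\|_{L^\infty}$ in the denominator rather than $\|\psi\|_{L^\infty}$. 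That is exactly the computation the paper carries out (without the preliminary rescaling). In short, your scaling step only reduces the lemma to its own $|h|=1$ case; the substantive work---moving the difference onto the kernel---still has to be done, and cannot be replaced by a black-box $L^\infty$ bound.
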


\begin{proof}
For the sake of brevity, we omit the time dependence of every function below. 
Let $\chi$ be a smooth radially non-increasing cutoff function that vanishes on $|x|\leq 1$ and is identically 1 for $|x|\geq 2$ and
such that $|\chi'|\leq 2$. For $r\geq 4|h|$, we estimate
\begin{align}\label{eq:longcalc}
D_\gamma[\delta_h\theta](x)
&\geq c_\gamma \int_{\R^2} \frac{\big[\delta_h\theta(x)-\delta_h\theta(x+y)\big]^2}{|y|^{2+\gamma}}\chi(|y|/r)\dd y \notag \\
&\geq c_\gamma |\delta_h\theta(x)|^2\int_{\R^2} \frac{\chi(|y|/r)}{|y|^{2+\gamma}}\dd y 
-2c_\gamma |\delta_h\theta(x)| \left|  \int_{\R^2} \frac{\delta_h\theta(x+y)}{|y|^{2+\gamma}}\chi(|y|/r)\dd y        \right| \notag \\
&\geq  c_\gamma \frac{|\delta_h\theta(x)|^2}{r^\gamma} 
-2c_\gamma |\delta_h\theta(x)| \left|  \int_{\R^2} \big[\theta(x+y)-\theta(x)\big]\delta_{-h} \frac{\chi(|y|/r)}{|y|^{2+\gamma}}\dd y \right| \notag \\
&\geq  c_\gamma \frac{|\delta_h\theta(x)|^2}{r^\gamma} 
-c_1c_\gamma |\delta_h\theta(x)|  \, |h|   
\int_{|y|\geq r} \frac{|\delta_y\theta(x)|}{|y|^{3+\gamma}} \dd y \notag \\
&\geq  c_\gamma \frac{|\delta_h\theta(x)|^2}{r^\gamma} -2c_1c_\gamma |\delta_h\theta(x)|  \, |h|  \, \|\theta\|_{L^\infty}
\int_{r}^\infty  \frac{1}{\rho^{2+\gamma}}\dd \rho,
\end{align}
for some constant $c_1\geq 1$.
Hence, for $r\geq 4|h|$ there holds
\begin{equation}\label{eq:dissip1}
D_\gamma[\delta_h\theta](x)\geq \frac{c_\gamma}{r^\gamma}|\delta_h\theta(x)|^2-c c_\gamma|\delta_h\theta(x)|\|\theta\|_{L^\infty}\frac{|h|}{r^{1+\gamma}},
\end{equation}
where $c\geq 1$ is an absolute constant. We choose $r>0$ such that
$$
\frac{c_\gamma}{r^\gamma}|\delta_h\theta(x)|^2=8c c_\gamma|\delta_h\theta(x)|\|\theta\|_{L^\infty}\frac{|h|}{r^{1+\gamma}},
$$
namely,
$$
r=\frac{8c\|\theta\|_{L^\infty}|h|}{|\delta_h\theta(x)|}.
$$
Notice that since $|\delta_h\theta(x)|\leq 2\|\theta\|_{L^\infty}$, we immediately obtain that $r\geq 4|h|$.
The result follows by plugging $r$ back into \eqref{eq:dissip1}.
\end{proof}

Concerning the nonlinear term, we have the following.

\begin{lemma}\label{lem:rieszbdd}
Let $r\geq 4|h|$ be arbitrarily fixed. Then
$$
|\delta_h\u(x,t)|\leq 
c\left[ r^{\gamma/2} \big(D_\gamma[\delta_h\theta](x,t)\big)^{1/2}+\frac{|h|\|\theta(t)\|_{L^\infty} }{r}\right],
$$
holds pointwise in $x,h\in \TT$ and $t\geq0$.
\end{lemma}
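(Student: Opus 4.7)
\emph{Proof plan.} The strategy is to write $\delta_h\u(x)=\RR^\perp\delta_h\theta(x)$ via its singular integral representation and split the integral at scale $r$ using the same radial cutoff $\chi$ as in Lemma \ref{lem:nonlinbdd}. That is,
$$
\delta_h\u(x)=I_{\rm near}(x)+I_{\rm far}(x),
$$
where $I_{\rm near}$ carries the weight $1-\chi(|y|/r)$ (supported on $|y|\leq 2r$) and $I_{\rm far}$ carries $\chi(|y|/r)$ (supported on $|y|\geq r$). The near piece will absorb the dissipation $D_\gamma[\delta_h\theta]$, while the far piece will pick up the $L^\infty$ bound on $\theta$.

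For $I_{\rm near}$, I would exploit the oddness of the Riesz kernel $y^\perp/|y|^3$ together with the radial character of $1-\chi(|y|/r)$ to conclude that the principal value of the kernel against the cutoff alone vanishes. This permits the subtraction of $\delta_h\theta(x)$ inside the integrand, after which Cauchy--Schwarz with the factorization $|y|^{-2}=|y|^{-(2+\gamma)/2}\cdot|y|^{-(2-\gamma)/2}$ yields
$$
|I_{\rm near}(x)|\leq c\left(\int_{\R^2}\frac{[\delta_h\theta(x+y)-\delta_h\theta(x)]^2}{|y|^{2+\gamma}}\dd y\right)^{1/2}\left(\int_{|y|\leq 2r}|y|^{\gamma-2}\dd y\right)^{1/2},
$$
which is bounded by $c\,r^{\gamma/2}\bigl(D_\gamma[\delta_h\theta](x)\bigr)^{1/2}$ after a polar-coordinate computation of the volume integral.

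For $I_{\rm far}$, I expand $\delta_h\theta(x+y)=\theta(x+y+h)-\theta(x+y)$ and perform the translation $y\mapsto y-h$ in the first summand, obtaining a single integral against the kernel-difference
$$
K_r(y):=\frac{(y-h)^\perp}{|y-h|^3}\chi(|y-h|/r)-\frac{y^\perp}{|y|^3}\chi(|y|/r).
$$
Since $r\geq 4|h|$, the vector field $y\mapsto \chi(|y|/r)y^\perp/|y|^3$ is smooth, supported in $|y|\geq r$, and has gradient of size $c/|y|^3$ there. The mean value theorem then gives $|K_r(y)|\leq c|h|/|y|^3$ on the support of the integrand, and integrating $|h|/|y|^3$ in polar coordinates over $|y|\gtrsim r$ produces the factor $|h|/r$. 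Bounding $|\theta|$ by $\|\theta(t)\|_{L^\infty}$ then yields $|I_{\rm far}(x)|\leq c|h|\|\theta(t)\|_{L^\infty}/r$, and combining the two estimates completes the proof.

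The main obstacle is the far-field bound: the hypothesis $r\geq 4|h|$ is precisely what prevents the translated cutoff from crossing the singularity of $y^\perp/|y|^3$ at the origin and ensures that the kernel-difference is genuinely of mean value scale $|h|/|y|^3$, rather than the much larger $1/|y|^2$ one would otherwise obtain. Once this pointwise control is in place, the recognition of the dissipation in $I_{\rm near}$ and the remaining one-dimensional integration in $I_{\rm far}$ are routine.
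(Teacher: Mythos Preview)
Your proposal is correct and follows essentially the same route as the paper: the same radial cutoff split into an inner piece (handled by oddness of the Riesz kernel, subtraction of $\delta_h\theta(x)$, and Cauchy--Schwarz against $D_\gamma$) and an outer piece (handled by passing the finite difference onto the kernel and applying the mean value theorem). The only cosmetic difference is that you phrase the outer-piece manipulation as a translation $y\mapsto y-h$ whereas the paper writes it directly as $\delta_{-h}$ acting on the kernel; these are the same computation.
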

\begin{proof}
Let us fix $r\geq 4|h|$, and let $\chi$ be a smooth radially non-increasing cutoff function 
that vanishes on $|x|\leq 1$ and is identically 1 for $|x|\geq 2$ and such that $|\chi'|\leq 2$.
We split the vector $\delta_h\u$ in an inner and an outer part
\begin{align*}
\delta_h\u(x)
=\frac{1}{2\pi} \mathrm{P.V.}\int_{\R^2} \frac{y^\perp}{|y|^3}\big[\delta_h\theta(x+y)-\delta_h\theta(x)\big]\dd y
=\delta_h\u_{in}(x)+\delta_h\u_{out}(x),
\end{align*}
by using that the kernel of $\RR^\perp$ has zero average on the unit sphere, where
\begin{align*}
\delta_h\u_{in}(x)=\frac{1}{2\pi}\mathrm{P.V.}\int_{\R^2} \frac{y^\perp}{|y|^3}\big[1-\chi(|y|/r)\big] \big[\delta_h\theta(x+y)-\delta_h\theta(x)\big]\dd y,
\end{align*}
and
\begin{align*}
\delta_h\u_{out}(x)&=\frac{1}{2\pi}\mathrm{P.V.}\int_{\R^2} \frac{y^\perp}{|y|^3} \chi(|y|/r)\big[\delta_h\theta(x+y)\big]\dd y\\
&=\frac{1}{2\pi}\mathrm{P.V.}\int_{\R^2} \delta_{-h}\left[\frac{y^\perp}{|y|^3}\chi(|y|/r) \right]\big[\theta(x+y)\big]\dd y.
\end{align*}
For the inner piece, in light of the Cauchy-Schwartz inequality, we obtain
\begin{align}
|\delta_h\u_{in}(x)|&\leq\frac{1}{2\pi}\int_{|y|\leq r} \frac{1}{|y|^2}|\delta_h\theta(x+y)-\delta_h\theta(x)|\dd y
\notag \\
&\leq \frac{1}{2\pi} \left[ \int_{|y|\leq r} \frac{1}{|y|^{2-\gamma}}  \right]^{1/2}\left[  \int_{\R^2}\frac{(\delta_h\theta(x+y)-\delta_h\theta(x))^2}{|y|^{2+\gamma}} \dd y\right]^{1/2}
\notag \\
&\leq cr^{\gamma/2} \big(D_\gamma[\delta_h\theta](x)\big)^{1/2}.
\label{eq:pfriesz1}
\end{align}
Regarding the outer part, the mean value theorem entails
\begin{align}\label{eq:pfriesz2}
|\delta_h\u_{out}(x)|&\leq c|h|\int_{|y|\geq r/2} \frac{|\theta(x+y)|}{|y|^3}\dd y
\leq c\frac{|h|\|\theta\|_{L^\infty}}{r}.
\end{align}
The conclusion follows by combining \eqref{eq:pfriesz1} and \eqref{eq:pfriesz2}. 
\end{proof}
We are now ready to complete the proof of the estimate \eqref{eq:sob}.

\begin{proof}[Proof of Theorem \ref{prop:sob}]
Without loss of generality, we may assume $K_\infty\geq1$.
Combining  \eqref{eq:globalLinf} and \eqref{eq:ineq1} with the results of the above two lemmas, we obtain the inequality
\begin{align}
L v^2+\frac12\frac{ D_\gamma[\delta_h\theta] }{|h|^{2+2\alpha}}+
\tilde{c}_\gamma \frac{|\delta_h\theta|^{2+\gamma}}{|h|^{2+2\alpha+\gamma} K_\infty^\gamma}
&\leq c\left[ r^{\gamma/2} \big(D_\gamma[\delta_h\theta]\big)^{1/2}+\frac{|h|K_\infty }{r}\right]\frac{ v^2}{|h|}  +
2\frac{(\delta_hf)(\delta_h\theta)}{|h|^{2+2\alpha}}. \label{eq:ineq2}
\end{align}
By the Cauchy-Schwartz inequality,
$$
c\left[ r^{\gamma/2} \big(D_\gamma[\delta_h\theta]\big)^{1/2}
+\frac{|h|K_\infty}{r}\right]\frac{ v^2}{|h|} \leq \frac14\frac{ D_\gamma[\delta_h\theta]}{|h|^{2+2\alpha}} +
c \left[|h|^{2\alpha}v^4r^\gamma+ \frac{K_\infty }{r}v^2\right].
$$
We now choose $r>0$ as
$$
r=4\left[\frac{4K_\infty^2}{|h|^{2\alpha}v^2}\right]^{\frac{1}{1+\gamma}},
$$
so that, in particular by \eqref{eq:globalLinf2} we obtain
$$
r=4\left[\frac{4K_\infty^2}{|\delta_h\theta|^2}\right]^{\frac{1}{1+\gamma}}|h|^{\frac{2}{1+\gamma}}\geq 4|h|^{\frac{2}{1+\gamma}}\geq 4|h|,
$$
since $|h|\leq 1$ and $\gamma>1$. In this way, since we assumed $K_\infty\geq 1$,
$$
|h|^{2\alpha}v^4r^\gamma+ \frac{K_\infty}{r}v^2\leq 2|h|^{2\alpha}v^4r^\gamma \leq  c K_\infty^{\frac{2\gamma}{1+\gamma}} |h|^{\frac{2\alpha}{1+\gamma}}
v^{2+\frac{2}{1+\gamma}},
$$
and \eqref{eq:ineq2} becomes
\begin{align}
L v^2+\frac14\frac{ D_\gamma[\delta_h\theta] }{|h|^{2+2\alpha}}+
\tilde{c}_\gamma \frac{|\delta_h\theta|^{2+\gamma}}{|h|^{2+2\alpha+\gamma} K_\infty^\gamma}
&\leq c K_\infty^{\frac{2\gamma}{1+\gamma}} |h|^{\frac{2\alpha}{1+\gamma}}
v^{2+\frac{2}{1+\gamma}}  +
\frac{(\delta_hf)(\delta_h\theta)}{|h|^{2+2\alpha}}. \label{eq:ineq3}
\end{align}
Using Young inequality with 
$$
p=\frac{1+\gamma}{2}, \qquad q=\frac{1+\gamma}{\gamma-1},
$$
we infer that
$$
c K_\infty^{\frac{2\gamma}{1+\gamma}} |h|^{\frac{2\alpha}{1+\gamma}}
v^{2+\frac{2}{1+\gamma}}\leq 
\tilde{c}_\gamma \frac{|\delta_h\theta|^{2+\gamma}}{|h|^{2+2\alpha+\gamma} K_\infty^\gamma} + c\frac{K_\infty^{\frac{4\gamma}{\gamma-1}}}{|h|^{2\alpha}} 
$$
Therefore, from \eqref{eq:ineq3} we deduce that
$$
L v^2+\frac14\frac{ D_\gamma[\delta_h\theta] }{|h|^{2+2\alpha}}
\leq c\frac{K_\infty^{\frac{4\gamma}{\gamma-1}}}{|h|^{2\alpha}}  +
2\frac{(\delta_hf)(\delta_h\theta)}{|h|^{2+2\alpha}}.
$$
We integrate the above inequality first in $h\in \TT$ (which is allowed, since $\alpha<1$) and then $x\in \TT$. Using that 
$$
\frac12\int_{\R^2}\int_{\R^2} \frac{ D_\gamma[\delta_h\theta] }{|h|^{2+2\alpha}}\dd h\,\dd x
=\int_{\R^2}\int_{\R^2} \frac{|\delta_h\Lambda^{\gamma/2}\theta|^2}{|h|^{2+2\alpha}}\dd h\, \dd x
=\|\theta\|^2_{H^{\alpha+\gamma/2}}
$$
and the estimate, valid for $\alpha\in (0,1)$,
\begin{align*}
\int_{\R^2}\int_{\R^2}\frac{(\delta_hf)(\delta_h\theta)}{|h|^{2+2\alpha}}\dd h\, \dd x 
&\leq \left[\int_{\R^2}\int_{\R^2}\frac{|\delta_hf|^2}{|h|^{2+2\alpha}}\dd h\, \dd x \right]^{1/2}
\left[\int_{\R^2}\int_{\R^2}\frac{|\delta_h\theta|^2}{|h|^{2+2\alpha}}\dd h\, \dd x \right]^{1/2}\\
&\leq \|f\|_{H^{\alpha}}\|\theta\|_{H^{\alpha}}
\leq \frac14 \|\theta\|_{H^{\alpha+\gamma/2}}^2+c\|f\|_{H^{\alpha}}^2,
\end{align*}
we arrive at
$$
\ddt\|\theta\|^2_{H^\alpha}+ \frac14\|\theta\|^2_{H^{\alpha+\gamma/2}}\leq cK_\infty^{\frac{4\gamma}{\gamma-1}}+  
c\|f\|_{H^{\alpha}}^2.
$$
This is precisely \eqref{eq:sob}, and the proof is concluded.
\end{proof}

\section{The global attractor}\label{sec:glob}
A sufficient condition for the existence of the global attractor (the unique compact set of the phase space
that is invariant and attracting) for a dynamical system is the existence of a 
compact absorbing set \cites{Hale,SellYou,T3}. Moreover, being the attractor the minimal set in the class of closed attracting sets,
it is contained in any (closed) absorbing set. In particular, the attractor inherits the regularity property of
the absorbing set, namely, the existence of regular (i.e. bounded in higher order Sobolev spaces) absorbing sets 
translate into the existence of a regular attractor. We prove Theorem \ref{thm:globalattra} in the next Section \ref{sub:regattr},
and Theorem \ref{thm:weakglobalattra} in the subsequent Section \ref{sub:weakattr}, by using again the estimate
\eqref{eq:sob} several times.

In the course of this section, we will often make use of the fractional product inequality \cite{KP88}
\begin{equation}\label{eq:prod}
\|\Lambda^s(\varphi\psi)\|_{L^p}\leq 
c\left[\|\varphi\|_{L^{p_1}}\|\Lambda^s\psi\|_{L^{p_2}}+\|\Lambda^s\varphi\|_{L^{p_3}}\|\psi\|_{L^{p_4}}\right],
\end{equation}
valid for $s>0$, $1/p=1/p_1+1/p_2=1/p_3+1/p_4$ and $p_2,p_3\in (1,\infty)$,
the commutator estimate \cite{KPV91}
\begin{equation}\label{eq:comm}
\|\Lambda^{s} (\varphi \psi)-\varphi\Lambda^{s}\psi\|_{L^p}
\leq c\left[\|\nabla \varphi\|_{L^{p_1}}\|\Lambda^{s-1}\psi\|_{L^{p_2}}+\|\Lambda^{s}\varphi\|_{L^{p_3}}\|\psi\|_{L^{p_4}}\right],
\end{equation}
with the same constraints as above,
and the Sobolev embedding
\begin{equation}\label{eq:imbe}
\|\varphi\|_{L^p}\leq c\| \Lambda^{1-2/p}\varphi\|_{L^2},
\end{equation}
with $p\in [2,\infty)$.

\subsection{Regular absorbing sets}\label{sub:regattr}
The existence and regularity of the attractor in Theorem \ref{thm:globalattra} follow from the existence of an
absorbing set bounded in $H^{2-\gamma/2}$. 

\begin{theorem}\label{thm:H32abs}
The set
\begin{equation*}
B^\gamma_2=\left\{\varphi\in H^{2-\gamma/2}: \|\varphi\|_{H^{2-\gamma/2}}
\leq R_{2,\gamma}\right\}
\end{equation*}
with
$$
R_{2,\gamma}^2=c\left[2R^2_{1,\gamma}+ \|f\|^2_{H^{2-\gamma}}\right]\e^{cR^2_{1,\gamma}},
$$
is absorbing for $S_\gamma(t)$. Moreover, 
\begin{equation}\label{eq:H32unif}
\sup_{t\geq 0}\sup_{\theta_0\in B^\gamma_2}\left[\|S_\gamma(t)\theta_0\|^2_{H^{2-\gamma/2}}+\int_t^{t+1}\|S_\gamma(\tau)\theta_0\|^2_{H^{2}}\dd \tau\right]\leq 
\Q(R_{2,\gamma}),
\end{equation}
where $\Q(\cdot)$ is a positive increasing function with $\Q(0)=0$.
\end{theorem}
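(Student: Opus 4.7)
The plan is to bootstrap the integrated $H^{2-\gamma/2}$ regularity provided by \eqref{eq:H1unif} to a pointwise absorbing bound. Direct application of Theorem \ref{prop:sob} is unavailable at the level $\alpha=2-\gamma/2$, since this value lies outside $(0,1)$; instead I will perform a classical $H^{s}$ energy estimate with $s=2-\gamma/2$, exploit the commutator inequality \eqref{eq:comm}, and close the resulting superlinear differential inequality via the standard uniform Gronwall lemma. For $\theta_0\in B_1^\gamma$, applying $\Lambda^s$ to \eqref{eq:SQGgamma}, pairing with $\Lambda^s\theta$, and using $\div\u=0$ to annihilate $\int\u\cdot\nabla\Lambda^s\theta\cdot\Lambda^s\theta$ yields
\begin{equation*}
\tfrac12\ddt\|\theta\|^2_{H^{2-\gamma/2}}+\|\theta\|^2_{H^{2}}=-\int_{\TT}[\Lambda^s,\u]\nabla\theta\cdot\Lambda^s\theta\,\dd x+\int_{\TT}f\,\Lambda^{2s}\theta\,\dd x,
\end{equation*}
where I have used that $s+\gamma/2=2$ lifts the dissipation exactly to the $H^{2}$ level.

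The forcing term is dualized as $\int f\,\Lambda^{2s}\theta=\int\Lambda^{2-\gamma}f\cdot\Lambda^{2}\theta$ and absorbed by Young's inequality into $\tfrac14\|\theta\|^2_{H^{2}}+c\|f\|^2_{H^{2-\gamma}}$. For the nonlinear term I apply \eqref{eq:comm} with $p=2$ and $p_1=p_2=p_3=p_4=4$, together with the $L^4$-boundedness of Riesz transforms and the embedding \eqref{eq:imbe}, obtaining $\|[\Lambda^s,\u]\nabla\theta\|_{L^2}\leq c\|\theta\|_{H^{3/2}}\|\theta\|_{H^{5/2-\gamma/2}}$. Both $H^{3/2}$ and $H^{5/2-\gamma/2}$ lie between $H^{2-\gamma/2}$ and $H^{2}$, and a direct computation shows that the corresponding $H^{2}$-side interpolation exponents are $(\gamma-1)/\gamma$ and $1/\gamma$ respectively, so they sum to one; hence the product collapses to $c\|\theta\|_{H^{2-\gamma/2}}\|\theta\|_{H^{2}}$. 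Multiplying by $\|\Lambda^s\theta\|_{L^2}$ and applying Young once more, I arrive at
\begin{equation*}
\ddt\|\theta\|^2_{H^{2-\gamma/2}}+\|\theta\|^2_{H^{2}}\leq c\|\theta\|^4_{H^{2-\gamma/2}}+c\|f\|^2_{H^{2-\gamma}}.
\end{equation*}

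Setting $y(t)=\|\theta(t)\|^2_{H^{2-\gamma/2}}$, this inequality reads $y'\leq(cy)\,y+c\|f\|^2_{H^{2-\gamma}}$. Since $\theta_0\in B_1^\gamma$, estimate \eqref{eq:H1unif} supplies $\int_t^{t+1}y(\tau)\,\dd\tau\leq 2R_{1,\gamma}^2$ uniformly in $t\geq 0$, and the uniform Gronwall lemma then delivers $y(t+1)\leq(2R_{1,\gamma}^2+c\|f\|^2_{H^{2-\gamma}})e^{2cR_{1,\gamma}^2}$, which is precisely $R_{2,\gamma}^2$. Because $B_1^\gamma$ is already absorbing in $H^{2-\gamma}$, so is $B_2^\gamma$, with absorption time increased by one. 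Integrating the differential inequality on $[t,t+1]$ once the pointwise bound $y\leq R_{2,\gamma}^2$ is in force then produces \eqref{eq:H32unif} with a polynomial $\Q$ vanishing at the origin (note that $\|f\|^2_{H^{2-\gamma}}\leq c\,R_{2,\gamma}^2$ by the very definition of $R_{2,\gamma}$). The main obstacle is the commutator estimate itself: the interpolation indices must add to exactly one between the controlling norm $H^{2-\gamma/2}$ and the dissipation $H^{2}$, which is the precise algebraic manifestation of $s+\gamma/2=2$ and is what makes the scheme closable by uniform Gronwall against the $L^1_t$ control on $y$.
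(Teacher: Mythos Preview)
Your argument is correct and follows the same route as the paper: an $H^{2-\gamma/2}$ energy estimate, the Kenig--Ponce--Vega commutator inequality \eqref{eq:comm}, and closure via the uniform Gronwall lemma fed by the $L^1_t$ bound \eqref{eq:H1unif}. The only cosmetic difference is that the paper chooses the H\"older exponents $p_1=4/\gamma$, $p_2=4/(2-\gamma)$ in \eqref{eq:comm} so that Sobolev embedding lands directly on $\|\theta\|_{H^{2-\gamma/2}}^2\|\theta\|_{H^2}$, whereas you take all $p_i=4$ and then interpolate $H^{3/2}$ and $H^{5/2-\gamma/2}$ between $H^{2-\gamma/2}$ and $H^2$; the resulting differential inequality is identical.
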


\begin{proof}
Clearly, it is enough to show that $B^\gamma_2$ absorbs $B^\gamma_1$, the $H^{2-\gamma}$
absorbing set obtained in Theorem~\ref{thm:H1abs}. If $\theta_0\in B^\gamma_1$, then
\eqref{eq:H1unif} implies that 
\begin{equation}\label{eq:H32int2}
\sup_{t\geq 0}\int_t^{t+1}\|S_\gamma(\tau)\theta_0\|^2_{H^{2-\gamma/2}}\dd \tau\leq 2 R^2_{1,\gamma}.
\end{equation}
By testing \eqref{eq:SQGgamma} with $\theta$ in $H^{2-\gamma/2}$ and using standard arguments, we deduce that
\begin{align*}
\ddt \|\theta\|^2_{H^{2-\gamma/2}}+\|\theta\|^2_{H^2}\leq \|f\|^2_{H^{2-\gamma}}+
2\left|\int_{\TT}\left[\Lambda^{2-\gamma/2}(\u\cdot\nabla\theta)-\u\cdot \nabla\Lambda^{2-\gamma/2}\theta\right]\Lambda^{2-\gamma/2}\theta \dd x\right|.
\end{align*}
By means of the commutator estimate \eqref{eq:comm},
$$
\|\Lambda^{2-\gamma/2} (\varphi \psi)-\varphi\Lambda^{2-\gamma/2}\psi\|_{L^2}
\leq c\left[\|\nabla \varphi\|_{L^{4/\gamma}}\|\Lambda^{1-\gamma/2}\psi\|_{L^{\frac{4}{2-\gamma}}}+\|\Lambda^{2-\gamma/2}\varphi\|_{L^{\frac{4}{2-\gamma}}}\|\psi\|_{L^{4/\gamma}}\right], 
$$
and the two-dimensional Sobolev inequality
$$ 
\|\varphi\|_{L^p}\leq c\|\Lambda^{1-2/p}\varphi\|_{L^2},\qquad p\in[2,\infty),
$$
we therefore have
\begin{align*}
\ddt \|\theta\|^2_{H^{2-\gamma/2}}+\|\theta\|^2_{H^2}
&\leq \|f\|^2_{H^{2-\gamma}}+
c\|\theta\|_{H^{2-\gamma/2}}\|\Lambda \theta\|_{L^{4/\gamma}}\|\Lambda^{2-\gamma/2}\theta\|_{L^\frac{4}{2-\gamma}}\\
&\leq \|f\|^2_{H^{2-\gamma}}+c\|\theta\|_{H^{2-\gamma/2}}^2\|\theta\|_{H^2}\\
&\leq \|f\|^2_{H^{2-\gamma}}+c\|\theta\|_{H^{2-\gamma/2}}^4+\frac12\|\theta\|^2_{H^2}.
\end{align*}
Hence,
\begin{align}\label{eq:high}
\ddt \|\theta\|^2_{H^{2-\gamma/2}}+\frac12\|\theta\|^2_{H^2}\leq \|f\|^2_{H^{2-\gamma}}+c\|\theta\|_{H^{2-\gamma/2}}^4.
\end{align}
Thanks to the local integrability \eqref{eq:H32int2} and the above differential
inequality, the uniform Gronwall lemma implies
\begin{equation}\label{eq:boh}
 \|S_\gamma(t)\theta_0\|^2_{H^{2-\gamma/2}}\leq c\left[2R^2_{1,\gamma}+ \|f\|^2_{H^{2-\gamma}}\right]\e^{cR^2_{1,\gamma}}, \qquad \forall t\geq 1.
\end{equation}
Thus, setting
$$
R_{2,\gamma}^2:=c\left[2R^2_{1,\gamma}+ \|f\|^2_{H^{2-\gamma}}\right]\e^{cR^2_{1,\gamma}},
$$
we obtain that 
$$
S_\gamma(t)B_{1}^\gamma\subset B_{2}^\gamma, \qquad \forall t\geq 1,
$$
as we wanted. Concerning estimate \eqref{eq:H32unif}, it is clear that it holds for $t\geq1$ from \eqref{eq:boh} and 
by integrating \eqref{eq:high} on $(t,t+1)$. For $t<1$, it suffices to use \eqref{eq:H32int2} and the standard Gronwall
lemma on the time interval $(0,t)$, applied to \eqref{eq:high}.
\end{proof}

The existence of a compact absorbing set is well-known to be sufficient for the existence of the global attractor.
However, due to the possible lack of continuity of the map $S_\gamma(t):H^{2-\gamma}\to H^{2-\gamma}$ for fixed
$t>0$, the invariance of $A_\gamma$ requires some care. In fact, to conclude the proof of Theorem \ref{thm:globalattra}, 
it is enough to prove continuity on the regular absorbing set $B_{2,\gamma}$. Our next goal is then to establish the following.

\begin{proposition}
Let $\gamma\in (1,2)$. For each fixed $t\geq 0$, $S_\gamma(t)$ is Lipschitz-continuous on $B^\gamma_2$ in the
topology of $H^{2-\gamma}$ and
\begin{equation}\label{eq:conti}
\sup_{\theta_{0,i}\in B^\gamma_2}\|S_\gamma(t) \theta_{0,1}-S_{\gamma}(t)\theta_{0,2}\|_{H^{2-\gamma}}\leq \e^{\Q(R_{2,\gamma}) t} \| \theta_{0,1}-\theta_{0,2}\|_{H^{2-\gamma}}, \qquad \forall t\geq 0,
\end{equation}
where $\Q(\cdot)$ is a positive increasing function with $\Q(0)=0$.
\end{proposition}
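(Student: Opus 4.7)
The plan is a standard energy estimate for the difference $\eta:=\theta_1-\theta_2$ at the scale-invariant level $H^{2-\gamma}$, using the enhanced regularity that $B_2^\gamma$ enjoys thanks to Theorem~\ref{thm:H32abs} in order to close a Gr\"onwall argument. Set $\theta_i=S_\gamma(t)\theta_{0,i}$, $\u_i=\RR^\perp\theta_i$, and $\w=\u_1-\u_2=\RR^\perp\eta$. Subtracting the equations for $\theta_1$ and $\theta_2$ cancels the (autonomous) forcing and yields
\begin{equation*}
\de_t\eta+\u_1\cdot\nabla\eta+\w\cdot\nabla\theta_2+\Lambda^\gamma\eta=0.
\end{equation*}
Applying $\Lambda^{2-\gamma}$ and pairing in $L^2$ with $\Lambda^{2-\gamma}\eta$ gives
\begin{equation*}
\frac12\ddt\|\eta\|^2_{H^{2-\gamma}}+\|\eta\|^2_{H^{2-\gamma/2}}=I_1+I_2,
\end{equation*}
where $I_1$ is the transport contribution by $\u_1$ and $I_2$ is the stretching term paired with $\nabla\theta_2$.

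For $I_1$ I would use the usual commutator trick: since $\div\u_1=0$, the quantity $\int_\TT(\u_1\cdot\nabla\Lambda^{2-\gamma}\eta)\,\Lambda^{2-\gamma}\eta\,\dd x$ vanishes by integration by parts, so
\begin{equation*}
I_1=-\int_\TT\bigl[\Lambda^{2-\gamma}(\u_1\cdot\nabla\eta)-\u_1\cdot\nabla\Lambda^{2-\gamma}\eta\bigr]\Lambda^{2-\gamma}\eta\,\dd x,
\end{equation*}
which is handled by the Kato--Ponce commutator estimate \eqref{eq:comm} with $s=2-\gamma$, together with the Sobolev embedding \eqref{eq:imbe} to convert $L^p$ norms of fractional derivatives into $H^\sigma$ norms. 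For $I_2$, the fractional product inequality \eqref{eq:prod} applied to $\Lambda^{2-\gamma}(\w\cdot\nabla\theta_2)$, combined with \eqref{eq:imbe} and the obvious bound $\|\w\|_{H^s}\leq c\|\eta\|_{H^s}$ (the operator $\RR^\perp$ being of order zero), produces an estimate of the same structure with $\theta_2$ replacing $\u_1$. The targeted inequality, after absorbing all $\|\eta\|_{H^{2-\gamma/2}}$ factors via Young, reads
\begin{equation*}
\ddt\|\eta\|^2_{H^{2-\gamma}}+\tfrac12\|\eta\|^2_{H^{2-\gamma/2}}\leq \Phi(t)\,\|\eta\|^2_{H^{2-\gamma}},
\end{equation*}
where $\Phi(t)$ is a polynomial combination of $\|\theta_i(t)\|_{H^{2-\gamma/2}}$ and $\|\theta_i(t)\|_{H^2}$ for $i=1,2$.

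Starting from $\theta_{0,i}\in B_2^\gamma$, the uniform estimate \eqref{eq:H32unif} delivered by Theorem~\ref{thm:H32abs} gives
\begin{equation*}
\sup_{t\geq 0}\int_t^{t+1}\Phi(s)\,\dd s\leq \Q(R_{2,\gamma}),
\end{equation*}
for some increasing $\Q$, so $\Phi$ is uniformly locally integrable in time. The uniform Gr\"onwall lemma then produces an exponential bound of the form $\e^{\Q(R_{2,\gamma}) t}$ on $\|\eta(t)\|^2_{H^{2-\gamma}}$, which is precisely~\eqref{eq:conti} (after relabeling $\Q$).

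The delicate point I anticipate is not conceptual but technical: the scale-invariance of $H^{2-\gamma}$ forbids the most natural Kato--Ponce split (one cannot afford $\|\nabla\u_1\|_{L^\infty}$ at this regularity), so the exponents $p_i$ entering \eqref{eq:comm} and \eqref{eq:prod} have to be balanced so that each factor is either $\|\eta\|_{H^{2-\gamma}}$ (Gr\"onwallable), a fractional power of $\|\eta\|_{H^{2-\gamma/2}}$ (absorbable into the dissipation), or a norm of $\theta_i$ lying between $H^{2-\gamma/2}$ and $H^2$ (integrable in time by \eqref{eq:H32unif}). This is the very same tension that defeated the crude estimate~\eqref{eq:sad} and forced the nonlinear-lower-bound approach of Section~\ref{sec:sob}; here, however, we are working on the regular absorbing set $B_2^\gamma$, so the higher-order information already available makes the bookkeeping tedious but routine.
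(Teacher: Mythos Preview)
Your proposal is correct and follows the same architecture as the paper: energy estimate on the difference in $H^{2-\gamma}$, commutator trick for the transport term $I_1$ via \eqref{eq:comm}, product estimate for the stretching term $I_2$ via \eqref{eq:prod}, then Gr\"onwall using the regularity supplied by \eqref{eq:H32unif}.

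The only noteworthy difference is that the paper's choice of H\"older exponents is slightly sharper than what you anticipate: both $I_1$ and $I_2$ are bounded by $c\|\theta_i\|_{H^{2-\gamma/2}}\|\eta\|_{H^{2-\gamma}}\|\eta\|_{H^{2-\gamma/2}}$, so the Gr\"onwall coefficient $\Phi(t)$ involves only the $H^{2-\gamma/2}$ norms of $\theta_1,\theta_2$ and \emph{not} their $H^2$ norms. Since $\|\theta_i(t)\|_{H^{2-\gamma/2}}$ is uniformly bounded pointwise in time by \eqref{eq:H32unif} (not merely integrable), the coefficient is in fact constant and the ordinary Gr\"onwall lemma applies directly, with no need for time integration of $\|\theta_i\|_{H^2}^2$. (For $I_2$ the paper first shifts $\gamma/2$ derivatives onto the $\eta$ factor, writing the pairing as $\int\Lambda^{2-3\gamma/2}(\bar\u\cdot\nabla\theta_2)\,\Lambda^{2-\gamma/2}\bar\theta$, which is what makes this work.) Your approach of allowing $\|\theta_i\|_{H^2}^2$ in $\Phi$ and using its local-in-time integrability is also valid and leads to the same exponential bound; it is just a touch less clean. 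One terminological nit: what you need at the end is the standard Gr\"onwall inequality, not the uniform one.
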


\begin{proof}
Denote by $\theta_i(t)=S_\gamma(t)\theta_{0,i}$, $i=1,2$, two solutions emanating from initial data $\theta_{0,i}\in B_2^\gamma$.
Their difference $\bar\theta=\theta_1-\theta_2$ solves the equation
$$
\de_t\bar\theta +\u_1\cdot \nabla \bar\theta+ \bar\u\cdot\nabla \theta_2+\Lambda^\gamma\bar\theta=0.
$$
Testing the above equation with $\bar\theta$ in $H^{2-\gamma}$ yields
\begin{equation}\label{eq:nonso}
\frac12\ddt\|\bar\theta\|^2_{H^{2-\gamma}}+\|\bar\theta\|^2_{H^{2-\gamma/2}}
\leq \left|\int_{\TT}\Lambda^{2-\gamma}(\u_1\cdot \nabla \bar\theta)\Lambda^{2-\gamma}\bar\theta\,\dd x \right|
+\left|\int_{\TT}\Lambda^{2-\gamma}(\bar\u\cdot \nabla \theta_2)\Lambda^{2-\gamma}\bar\theta\,\dd x\right|.
\end{equation}
We estimate the two terms in right-hand-side above separately. Concerning the first one,  we use \eqref{eq:comm} and 
\eqref{eq:imbe} to get
\begin{align*}
&\left|\int_{\TT}\Lambda^{2-\gamma}(\u_1\cdot \nabla \bar\theta)\Lambda^{2-\gamma}\bar\theta\,\dd x \right|
=\left|\int_{\TT}\left[\Lambda^{2-\gamma}(\u_1\cdot \nabla \bar\theta)-\u_1\cdot\Lambda^{2-\gamma}\nabla\bar\theta\right]\Lambda^{2-\gamma}\bar\theta\,\dd x \right|\\
&\qquad\qquad\leq \|\Lambda^{2-\gamma}(\u_1\cdot \nabla \bar\theta)-\u_1\cdot\Lambda^{2-\gamma}\nabla\bar\theta\|_{L^{\frac{4}{2+\gamma}}}\|\Lambda^{2-\gamma}\bar\theta\|_{L^\frac{4}{2-\gamma}}\\
&\qquad\qquad\leq c\left[\|\nabla\u_1\|_{L^{4/\gamma}} \|\Lambda^{2-\gamma}\bar\theta\|_{L^2}+
\|\Lambda^{2-\gamma}\u_1\|_{L^{\frac{4}{2-\gamma}}} \|\nabla\bar\theta\|_{L^{2/\gamma}}\right]\|\bar\theta\|_{H^{2-\gamma/2}}\\
&\qquad\qquad\leq c \|\theta_1\|_{H^{2-\gamma/2}}\|\bar\theta\|_{H^{2-\gamma}}\|\bar\theta\|_{H^{2-\gamma/2}},
\end{align*}
while for the second we exploit \eqref{eq:prod} and \eqref{eq:imbe} to obtain
\begin{align*}
&\left|\int_{\TT}\Lambda^{2-\gamma}(\bar\u\cdot \nabla \theta_2)\Lambda^{2-\gamma}\bar\theta\,\dd x\right|
=\left|\int_{\TT}\Lambda^{2-3\gamma/2}(\bar\u\cdot \nabla \theta_2)\Lambda^{2-\gamma/2}\bar\theta\,\dd x\right|\\
&\qquad\qquad\leq c\|\Lambda^{2-3\gamma/2}(\bar\u\cdot \nabla \theta_2)\|_{L^2} \|\Lambda^{2-\gamma/2}\bar\theta\|_{L^2}\\
&\qquad\qquad\leq c\left[\|\bar\u\|_{L^{\frac{2}{\gamma-1}}}\|\Lambda^{3-3\gamma/2} \theta_2\|_{L^\frac{2}{2-\gamma}}
+\|\Lambda^{2-3\gamma/2}\bar\u\|_{L^\frac{4}{2-\gamma}} \|\nabla\theta_2\|_{L^{4/\gamma}}\right] \|\bar\theta\|_{H^{2-\gamma/2}}\\
&\qquad\qquad\leq c \|\theta_2\|_{H^{2-\gamma/2}}\|\bar\theta\|_{H^{2-\gamma}}\|\bar\theta\|_{H^{2-\gamma/2}}.
\end{align*}
In view of the above estimates and using Young inequality, \eqref{eq:nonso} becomes
$$
\ddt\|\bar\theta\|^2_{H^{2-\gamma}}+\|\bar\theta\|^2_{H^{2-\gamma/2}}
\leq c\left[\|\theta_1\|_{H^{2-\gamma/2}}^2+\|\theta_2\|_{H^{2-\gamma/2}}^2\right]\|\bar\theta\|_{H^{2-\gamma}}^2.
$$
In light of \eqref{eq:H32unif}, the continuous dependence estimate \eqref{eq:conti} follows from a further application
of the Gronwall inequality. 
\end{proof}

\subsection{Global attractors for weak solutions}\label{sub:weakattr}
A viscosity solution to \eqref{eq:SQGgamma} is a mean free function 
$\theta^\gamma\in C([0,\infty);L^2)$  that satisfies \eqref{eq:SQGgamma} in 
the sense of distributions, and such that there exist sequences
$\eps_n\to 0$ and $\theta_n^\gamma$ satisfying
$$
\begin{cases}
\de_t\theta_n^\gamma +\u_n^\gamma \cdot \nabla \theta_n^\gamma+\Lambda^\gamma\theta_n^\gamma-\eps_n\Delta\theta_n^\gamma=f,\\
\u_n^\gamma = \RR^\perp \theta_n^\gamma = \nabla^\perp \Lambda^{-1} \theta_n^\gamma,
\end{cases}
$$
such that $\theta^\gamma_n\to \theta^\gamma$ in $C_w([0,T];L^2)$, for every $T>0$ and $\theta^\gamma_{n}(0)\to \theta(0)$ strongly in $L^2$. From \cite{CC04}, it follows 
that for any $\theta_0\in L^2$, a (possibly non-unique) viscosity solution to \eqref{eq:SQGgamma} 
exists. The fact that viscosity solutions are strongly continuous is a consequence of the fact that they satisfy
the energy equality (see \cites{CD14, CTV14} for a proof in the critical case). Following the approach in \cites{CZ13, MV98}, for $t\geq 0$ and each $\theta_0\in L^2$ 
we define the set-valued maps $S_\gamma(t):L^2\to 2^{L^2}$, still denoted as the single-valued ones,
$$
S_\gamma(t)\theta_0=\big\{\theta^\gamma(t): \theta^\gamma(\cdot) \text{ is a viscosity solution to \eqref{eq:SQGgamma} with } \theta^\gamma(0)=\theta_0\big\}.
$$
Similarly to the critical case investigated in \cite{CZK15}, it is possible to show that
translations and concatenations of viscosity solutions are still viscosity solutions, so that 
$S_\gamma(t)$ satisfies the semigroup property
$$
S_\gamma(t+\tau)=S_\gamma(t)S_\gamma(\tau), \qquad \forall t,\tau\geq 0.
$$
Moreover, the graph of $S_\gamma(t)$ is closed, namely for any $t\geq 0$ 
the following implication holds true: 
$$
\theta_{0,n}\to \theta_0,\quad S_\gamma(t)\theta_{0,n} \ni \theta_n\to \theta \qquad \Rightarrow \qquad \theta\in S_\gamma(t)\theta_0.
$$
Above, limits are understood in the strong topology of $L^2$.
Therefore, to prove the existence of the global attractor is again sufficient to exhibit a compact absorbing set. To begin
with, \eqref{eq:expdecayL2} implies the existence of an $L^2$ bounded absorbing set
$$
B_0^\gamma=\left\{\varphi\in L^2: 
\|\varphi\|_{L^2}\leq \frac{2}{\kappa}\|f\|_{L^2}\right\}.
$$ 
In addition, it is not hard to see from \eqref{eq:energyin}, which holds for viscosity solutions (cf. \cite{CD14}), 
we also gain time integrability whenever $\theta_0\in B_0^\gamma$, namely 
\begin{equation}\label{eq:timeintg2}
\sup_{t\geq 0}\int_t^{t+1}\|S_\gamma(\tau)\theta_0\|^2_{H^{\gamma/2}}\dd \tau\leq 
\frac{4}{\kappa}\|f\|_{L^2}^2,
\end{equation}
where $\|S_\gamma(\tau)\theta_0\|^2_{H^{\gamma/2}}$ has to be understood as the supremum over all the
elements in the set $S_\gamma(\tau)\theta_0$.
Once a uniform $L^2$ estimate is available, we can proceed as in Section \ref{sec:dynamo} and deduce that
the set $B_\infty^\gamma$ in \eqref{eq:estimateLinf} defines an $L^\infty$ absorbing set for the multivalued case as well. 
It is crucial here that the norm of $\theta_0$ in $L^2$ controls the $L^\infty$ norm of the solutions for all positive times.
As the next step, we can assume $\theta_0 \in B_\infty^\gamma$ and apply \eqref{eq:sob} with $\alpha=\gamma/2$, that is
\begin{equation}\label{eq:sob3}
\ddt\|\theta\|^2_{H^{\gamma/2}}+ \frac14\|\theta\|^2_{H^{\gamma}}\leq c\left[\|\theta_0\|_{L^\infty}+\frac{1}{\kappa}\|f\|_{L^\infty}\right]^{\frac{4\gamma}{\gamma-1}}+  
c\|f\|_{H^{\gamma/2}}^2.
\end{equation}
By neglecting the positive term $\|\theta\|^2_{H^{\gamma}}$, using \eqref{eq:timeintg2} and the uniform Gronwall lemma
we have that
$$
\|S_\gamma(t)\theta_0\|^2_{H^{\gamma/2}}\leq c\left[\frac{3}{\kappa}\|f\|_{L^\infty}\right]^{\frac{4\gamma}{\gamma-1}}+  
c\|f\|_{H^{\gamma/2}}^2, \qquad \forall t\geq 1.
$$
In other words, the set
$$
B_{1/2}^\gamma=\left\{\varphi\in H^{\gamma/2}: 
\|\varphi\|_{H^{\gamma/2}}\leq  c\left[2R_\infty\right]^{\frac{4\gamma}{\gamma-1}}+  
c\|f\|_{H^{\gamma/2}}^2\right\}
$$ 
is absorbing for $S_\gamma(t)$, and in particular compact in $L^2$. This concludes the proof of the first part of Theorem \ref{thm:weakglobalattra}, that is, the existence of the global attractor bounded in $H^{\gamma/2}$. Concerning the second
part, note that \eqref{eq:sob3} also provides time integrability of the $H^\gamma$ norm of the solution.
Since $\gamma\in(1,2)$, the inclusion $H^\gamma\subset H^{2-\gamma}$ holds, 
so that time integrability in $H^{2-\gamma}$ follows from the Poincar\'e inequality. At this point, \eqref{eq:sob2} and
the uniform Gronwall lemma yields the existence of an $H^{2-\gamma}$ absorbing set of comparable
size of that in Theorem \ref{thm:H1abs}, on which the restriction of $S_\gamma(t)$ is single-valued. By arguing as
in Section \ref{sub:regattr}, the regularity of the global attractor can therefore be bootstrapped to $H^{2-\gamma/2}$
and the proof of Theorem \ref{thm:weakglobalattra} is achieved.

\section{Uniform absorbing sets}\label{sec:unifabs}
The dependence of the absorbing set with respect to $\gamma$ can certainly be improved. In our case,
by exploiting only the $L^\infty$ maximum principle, we chose a radius such that 
$R_{1,\gamma}\to \infty$ as $\gamma\to 1$. The results of \cite{CCZV15} indicate that, instead, $R_{1,\gamma}$
can be made uniformly bounded for $\gamma\in[1,2)$, as mentioned in the Introduction. By adapting  
the techniques of \cite{CCZV15}, we show in this section how to improve the bounds obtained in 
Theorem \ref{thm:H1abs}. Precisely, we prove the following.
\begin{theorem}\label{thm:H1absbeta}
Let $\gamma_0\in (1,2)$ be arbitrarily fixed. There exists $\beta=\beta(\|f\|_{L^\infty},\gamma_0)\in (0,1/4]$ such that the set
\begin{equation*}
B_1=\left\{\varphi\in H^{2-\gamma}: \|\varphi\|_{H^{2-\gamma}}\leq R_{1}\right\},
\end{equation*}
with 
$$
R_1^2=c\left[2R_\infty\right]^{\frac{4\gamma}{\gamma+\beta-1}}+  
c\|f\|_{H^{2-\gamma}}^2,
$$
is absorbing for $S_\gamma(t)$. Moreover, 
\begin{equation}\label{eq:H1unifbeta}
\sup_{t\geq 0}\sup_{\theta_0\in B_1}\left[\|S_\gamma(t)\theta_0\|^2_{H^{2-\gamma}}+\int_t^{t+1}\|S_\gamma(\tau)\theta_0\|^2_{H^{2-\gamma/2}}\dd \tau\right]\leq 
2 R_1^2.
\end{equation}
\end{theorem}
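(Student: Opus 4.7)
The plan is to refine Theorem \ref{prop:sob} along the lines of \cite{CCZV15}, replacing the pointwise $L^\infty$ control on $\theta$ by a finer H\"older control; this is exactly what converts the exponent $\frac{4\gamma}{\gamma-1}$ into the improved $\frac{4\gamma}{\gamma+\beta-1}$. The first step is to invoke Proposition \ref{thm:Cbeta}, which produces an exponent $\beta = \beta(\|f\|_{L^\infty}, \gamma_0) \in (0, 1/4]$ and a $C^\beta$-absorbing set for $S_\gamma(t)$ whose radius $K_\beta$ is controlled by a function of $R_\infty$ independently of $\gamma \in (1, \gamma_0]$. Since an absorbing set sitting inside this H\"older-absorbing set remains absorbing for the whole dynamics, one may work from the outset under the a priori uniform bound $\|\theta(t)\|_{C^\beta} \leq K_\beta$.

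Next, I would revisit Lemma \ref{lem:nonlinbdd}: in the tail integral $\int_{|y|\geq r} |\delta_y\theta(x)|/|y|^{3+\gamma}\,\dd y$ appearing in \eqref{eq:longcalc}, the crude bound $|\delta_y\theta|\leq 2\|\theta\|_{L^\infty}$ is replaced for small $|y|$ by $|\delta_y\theta|\leq K_\beta|y|^\beta$. This substitution, together with a re-optimization of the cutoff radius $r$, yields a strengthened nonlinear lower bound for $D_\gamma[\delta_h\theta]$ in which the H\"older scale replaces the $L^\infty$ scale. Feeding the sharper bound into the pointwise evolution \eqref{eq:ineq1} of $v^2=(\delta_h\theta/|h|^{1+\alpha})^2$ with $\alpha=2-\gamma$, and repeating the Cauchy-Schwarz and Young steps from the proof of Theorem \ref{prop:sob}, the Young conjugate exponent acquires a $\beta$-dependence, and a careful tracking of constants converts the previous remainder $K_\infty^{4\gamma/(\gamma-1)}/|h|^{2\alpha}$ into one of the form $K_\beta^{4\gamma/(\gamma+\beta-1)}/|h|^{2\alpha}$. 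Crucially, $\gamma+\beta-1\geq \beta>0$ stays bounded below as $\gamma\to 1^+$, so the resulting constant is uniform in $\gamma\in(1,\gamma_0]$. Integrating in $h$ and $x$ produces the refined energy inequality
\begin{equation*}
\ddt \|\theta\|^2_{H^{2-\gamma}} + \tfrac14 \|\theta\|^2_{H^{2-\gamma/2}} \leq c K_\beta^{\frac{4\gamma}{\gamma+\beta-1}} + c\|f\|^2_{H^{2-\gamma}},
\end{equation*}
and since Proposition \ref{thm:Cbeta} bounds $K_\beta$ by a multiple of $R_\infty$, the right-hand side takes the form appearing in the definition of $R_1^2$.

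The remainder is routine: the standard Gronwall lemma applied to the inequality above produces the absorbing ball $B_1$ of radius $R_1$, and the uniform local-in-time bound \eqref{eq:H1unifbeta} follows by integrating on $(t,t+1)$, exactly as in the proof of Theorem \ref{thm:H1abs}.

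The main obstacle will be the refined pointwise estimate in Lemma \ref{lem:nonlinbdd}: the re-optimization of $r$ must be executed so that the admissibility condition $r\geq 4|h|$ is preserved (this turns out to be automatic from the $C^\beta$ bound itself), and exact bookkeeping of the arithmetic is required to extract the sharp $\gamma+\beta-1$ denominator in the Young step. A secondary subtlety is that the Young conjugate exponent blows up as $\gamma\to 1^+$ when $\beta=0$; its uniform boundedness for $\gamma\in(1,\gamma_0]$ precisely requires $\beta>0$, so the strict positivity of $\beta$ and, together with it, the threshold $\beta\leq 1/4$ inherited from Proposition \ref{thm:Cbeta} are essentially forced by the structure of the argument.
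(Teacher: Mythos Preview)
Your proposal is correct and follows the same architecture as the paper: invoke Proposition~\ref{thm:Cbeta} to secure a uniform $C^\beta$ bound, upgrade the Sobolev differential inequality~\eqref{eq:sob2} to~\eqref{eq:sobbeta} by replacing the $L^\infty$ control with the H\"older one, and then apply Gronwall exactly as in Theorem~\ref{thm:H1abs}. The only cosmetic difference is that the paper outsources the refined inequality~\eqref{eq:sobbeta} to \cite{CZK15}*{Theorem~B.1}, whereas you sketch its derivation by revisiting Lemma~\ref{lem:nonlinbdd}; your sketch is accurate and is precisely what underlies that cited result.
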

The improvement compared to Theorem \ref{thm:H1abs} is clear. In this case, the absorbing set has a radius
that is well-behaved (namely, bounded)  as $\gamma\to 1^+$. In other words, it can be be made  \emph{independent} of $\gamma\in (1,2)$,
a crucial step towards the study of the limit $\gamma\to 1^+$. By following the strategy devised in Section \ref{sub:regattr}, we deduce the existence of a regular absorbing set, whose radius is independent of $\gamma$ as well. We state
these considerations in the following corollary.
\begin{corollary}\label{cor:cbdfbdfs}
Let $\gamma_0\in (1,2)$ be arbitrarily fixed. The set
\begin{equation*}
B_2=\left\{\varphi\in H^{2-\gamma/2}: \|\varphi\|_{H^{2-\gamma/2}}
\leq R_{2}\right\}
\end{equation*}
with
$$
R_{2}^2=c\left[2R^2_{1}+ \|f\|^2_{H^{2-\gamma}}\right]\e^{cR^2_{1}},
$$
is absorbing for $S_\gamma(t)$. Moreover, 
\begin{equation}\label{eq:H32unifbis}
\sup_{t\geq 0}\sup_{\theta_0\in B_2}\left[\|S_\gamma(t)\theta_0\|^2_{H^{2-\gamma/2}}+\int_t^{t+1}\|S_\gamma(\tau)\theta_0\|^2_{H^{2}}\dd \tau\right]\leq 
\Q(R_{2}),
\end{equation}
where $\Q(\cdot)$ is a positive increasing function with $\Q(0)=0$.
\end{corollary}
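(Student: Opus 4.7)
The plan is to essentially repeat the proof of Theorem \ref{thm:H32abs}, but feeding in the improved, $\gamma$-uniform bound $R_1$ from Theorem \ref{thm:H1absbeta} in place of $R_{1,\gamma}$. The key point I need to verify is that none of the intermediate constants reintroduces a blow-up as $\gamma\to 1^+$; as long as $\gamma$ is restricted to a compact subinterval $(1,\gamma_0]$ of $(1,2)$, the Sobolev and commutator constants involved remain uniformly bounded, so $R_2$ inherits $\gamma$-uniformity directly from $R_1$.

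First I would show that $B_2$ absorbs $B_1$. Starting from an initial datum $\theta_0\in B_1$, estimate \eqref{eq:H1unifbeta} gives the $\gamma$-independent time-integrability bound
\begin{equation*}
\sup_{t\geq 0}\int_t^{t+1}\|S_\gamma(\tau)\theta_0\|^2_{H^{2-\gamma/2}}\dd\tau\leq 2R_1^2.
\end{equation*}
Next, I would reproduce the derivation of \eqref{eq:high} verbatim (the commutator estimate \eqref{eq:comm} with exponents $4/\gamma$ and $4/(2-\gamma)$, together with the Sobolev embedding \eqref{eq:imbe}, has constants that are bounded for $\gamma\in (1,\gamma_0]$), obtaining
\begin{equation*}
\ddt\|\theta\|^2_{H^{2-\gamma/2}}+\frac12\|\theta\|^2_{H^2}\leq \|f\|^2_{H^{2-\gamma}}+c\|\theta\|_{H^{2-\gamma/2}}^4,
\end{equation*}
with $c$ independent of $\gamma\in(1,\gamma_0]$. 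Now I would apply the uniform Gronwall lemma on $(t,t+1)$, using the local integrability above, to conclude
\begin{equation*}
\|S_\gamma(t)\theta_0\|^2_{H^{2-\gamma/2}}\leq c\left[2R_1^2+\|f\|^2_{H^{2-\gamma}}\right]\e^{cR_1^2}=:R_2^2,\qquad \forall t\geq 1,
\end{equation*}
which gives the absorption $S_\gamma(t)B_1\subset B_2$ for $t\geq 1$, and in turn $B_2$ absorbs every bounded set of $H^{2-\gamma}$ since $B_1$ does.

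To obtain \eqref{eq:H32unifbis}, for $t\geq 1$ I would integrate \eqref{eq:high} on $(t,t+1)$, use the just-established pointwise bound on $\|S_\gamma(\tau)\theta_0\|^2_{H^{2-\gamma/2}}$ in the quartic term, and absorb the remaining forcing into a quantity of the form $\Q(R_2)$ with $\Q$ positive increasing and $\Q(0)=0$. For $0\leq t<1$, I would instead apply the standard Gronwall inequality to \eqref{eq:high} on $(0,t)$, with initial datum controlled by $R_2$ (since $\theta_0\in B_2$), getting the same type of bound.

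The only thing that could really go wrong is a hidden $\gamma$-dependence in the constants of \eqref{eq:high}; this is exactly where the restriction $\gamma\in(1,\gamma_0]$ with $\gamma_0<2$ enters, since the exponents $4/\gamma$ and $4/(2-\gamma)$ in the commutator estimate stay in a compact range $[4/\gamma_0,4/(2-\gamma_0)]$, so the Sobolev and Calder\'on--Zygmund constants are uniformly controlled. Once this is checked, the rest is a bootstrap identical in form to Theorem \ref{thm:H32abs}, and the corollary follows.
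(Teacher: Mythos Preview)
Your proposal is correct and matches the paper's own approach: the paper does not give a separate proof of this corollary but simply says that ``by following the strategy devised in Section~\ref{sub:regattr}'' (i.e., the proof of Theorem~\ref{thm:H32abs}) one obtains the $\gamma$-uniform analogue with $R_1$ in place of $R_{1,\gamma}$. Your added remark that the commutator and Sobolev constants remain bounded for $\gamma\in(1,\gamma_0]$ is the only point needing verification beyond the earlier proof, and it is handled exactly as you indicate.
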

In turn, bounds on the global attractor $A_\gamma$ are uniform, since the global attractor is necessarily contained
in any absorbing set.
\begin{corollary}\label{cor:unifatt}
Let $\gamma_0\in (1,2)$ be arbitrarily fixed, and let $A_\gamma\subset H^{2-\gamma}$ be the global attractor of $S_\gamma(t)$. Then 
\begin{align}\label{eq:unifinal}
\sup_{\gamma \in (1,\gamma_0]} \|A_\gamma\|_{H^{2-\gamma/2}}\leq R_{2}.
\end{align}
Moreover,
\begin{align}
\sup_{\gamma \in (1,\gamma_0]} \dim_{H^{2-\gamma}}A_\gamma<\infty.
\end{align} 
\end{corollary}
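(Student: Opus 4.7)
The first estimate is essentially cost-free: being the minimal closed attracting set, $A_\gamma$ must lie inside every closed absorbing set, in particular inside the ball $B_2$ of Corollary \ref{cor:cbdfbdfs}. The radius $R_2$ there depends only on $R_1$ and $\|f\|_{H^{2-\gamma}}$, and Theorem \ref{thm:H1absbeta} keeps $R_1$ uniformly bounded for $\gamma\in(1,\gamma_0]$, so \eqref{eq:unifinal} follows at once.

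For the uniform finite-dimensionality, the plan is to apply a standard smoothing / quasi-stability criterion (in the spirit of Ladyzhenskaya, or of Chueshov-Lasiecka and Malek-Prazak), which reduces the estimate of $\dim_{H^{2-\gamma}}A_\gamma$ to a uniform-in-$\gamma$ bound on the difference of two trajectories starting in $A_\gamma$. Given $\theta_{0,1},\theta_{0,2}\in A_\gamma$, set $\bar\theta(t)=S_\gamma(t)\theta_{0,1}-S_\gamma(t)\theta_{0,2}$. The first step is to revisit the continuous-dependence estimate leading to \eqref{eq:conti}: its proof uses only an $H^{2-\gamma/2}$ bound on the two trajectories, which on $A_\gamma$ is $\leq R_2$ uniformly in $\gamma$ by the first part of the Corollary. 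Integrating the resulting differential inequality will produce
\[
\|\bar\theta(t)\|_{H^{2-\gamma}}^2+\int_0^t\|\bar\theta(s)\|_{H^{2-\gamma/2}}^2\dd s\leq c\,\e^{cR_2^2 t}\|\bar\theta(0)\|_{H^{2-\gamma}}^2,
\]
with $c$ independent of $\gamma$. For any fixed $t_*>0$ this gives simultaneously a uniform Lipschitz constant for $S_\gamma(t_*)$ on $H^{2-\gamma}$ and a uniform time-averaged smoothing bound in the stronger norm $H^{2-\gamma/2}$; combined with the fact that the compact embedding $H^{2-\gamma/2}\hookrightarrow H^{2-\gamma}$ has covering numbers controlled uniformly for $\gamma\in(1,\gamma_0]$ (since the order gap $\gamma/2$ stays bounded away from $0$), the usual covering argument bounds $\dim_{H^{2-\gamma}}A_\gamma$ by a constant depending only on $t_*$, $R_2$ and $\gamma_0$-uniform embedding constants.

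The hard part will be tracking the $\gamma$-uniformity of every constant that appears along the way. The Kato-Ponce estimates \eqref{eq:prod}-\eqref{eq:comm} and the Sobolev embedding \eqref{eq:imbe} involve constants that depend continuously on the relevant exponents, and these exponents remain in a closed subinterval of $(0,1)$ for $\gamma\in(1,\gamma_0]$, so they can be absorbed into a $\gamma$-independent constant; the compact-embedding constant of $H^{2-\gamma/2}$ into $H^{2-\gamma}$ is likewise continuous in $\gamma$ and hence uniform on this range. The only quantities that could have exploded as $\gamma\to 1^+$ are the absorbing radii themselves, but this has been taken care of by Theorem \ref{thm:H1absbeta} and Corollary \ref{cor:cbdfbdfs}, which is precisely the reason those improvements were necessary before tackling the dimension estimate.
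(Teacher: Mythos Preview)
Your treatment of the first inequality is correct and matches the paper exactly: the attractor is contained in every closed absorbing set, hence in $B_2$, and the paper says precisely this in the line preceding the Corollary.

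For the fractal dimension, the paper does not give its own argument but simply refers to \cite{CTV14}*{Theorem 6.4}, remarking that the proof there carries over verbatim once the estimates are known to be $\gamma$-independent. That reference uses the classical Constantin--Foias trace method: one linearizes \eqref{eq:SQGgamma} about a trajectory on $A_\gamma$, estimates the trace of the linearized operator on $m$-dimensional subspaces via the Lieb--Thirring inequality, and chooses $m$ so that the sum of the first $m$ global Lyapunov exponents is negative. Your proposal instead follows the Ladyzhenskaya / Malek--Pra\v{z}\'ak route through a smoothing (quasi-stability) estimate and covering numbers of the compact embedding $H^{2-\gamma/2}\hookrightarrow H^{2-\gamma}$. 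Both strategies are legitimate and both produce $\gamma$-uniform bounds once the absorbing radii are uniform; the trace method tends to give sharper explicit constants, while yours avoids linearization entirely and reuses the continuous-dependence computation already done in the paper.

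One point in your sketch deserves care. The differential inequality for $\bar\theta$ yields only a \emph{time-integrated} bound $\int_0^{t_*}\|\bar\theta(s)\|_{H^{2-\gamma/2}}^2\dd s$, not a pointwise smoothing $\|\bar\theta(t_*)\|_{H^{2-\gamma/2}}\leq C\|\bar\theta(0)\|_{H^{2-\gamma}}$. The covering argument you invoke therefore requires either passing to the space of short trajectories (the $\ell$-trajectory method of Malek--Pra\v{z}\'ak, which is what you seem to have in mind), or upgrading the integrated bound to a pointwise one via a separate computation. Either is routine here, but you should say which one you are using rather than calling it ``the usual covering argument''.
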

The proof of the uniform fractal dimension estimate follows word for word the proof in \cite{CTV14}*{Theorem 6.4} for the critical case,
and it is left to the reader. Clearly, the independence of $\gamma$ of all the estimates translate into the uniformity above.

We devote the rest of the section to the proof of Theorem \ref{thm:H1absbeta}.

\subsection{Absorbing sets H\"older spaces}
In what follows, $\gamma_0\in (1,2)$ is arbitrarily fixed, and we consider 
$$
\gamma\in (1,\gamma_0].
$$
Therefore, we will not worry about the dependence on $\gamma$ of the constant $c_\gamma$ in \eqref{eq:fraclapc}. Indeed, the 
purpose here is to prepare the ground to study the limit $\gamma\to 1^+$, so our assumption is absolutely harmless.
Here, we aim to prove the following result, namely, there exists an absorbing set of H\"older continuous functions.
\begin{proposition}\label{thm:Cbeta}
There exists 
$\beta=\beta(\|f\|_{L^\infty})\in (0,1/4]$ independent of $\gamma$
such that the set
\begin{equation*}
B_{\beta}=\left\{\varphi\in C^\beta\cap H^{2-\gamma}: \|\varphi\|_{C^\beta}\leq c\|f\|_{L^\infty}\right\}
\end{equation*}
is an absorbing set for $S_\gamma(t)$. Moreover, 
\begin{equation}\label{eq:Calphaunifbeta}
\sup_{t\geq 0}\sup_{\theta_0\in B_\beta}\|S(t)\theta_0\|_{C^\beta}\leq 2 c\|f\|_{L^\infty}.
\end{equation}
\end{proposition}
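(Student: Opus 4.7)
The plan is to adapt the nonlinear maximum principle technique of \cite{CCZV15}, developed for the critical case $\gamma=1$, to the subcritical regime, tracking the dependence on $\gamma$ carefully to ensure uniformity for $\gamma\in(1,\gamma_0]$.

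First I would work with the rescaled finite difference $v(x,t;h)=\delta_h\theta(x,t)/|h|^\beta$, whose supremum over $(x,h)$ agrees (on smooth functions) with $\|\theta(t)\|_{C^\beta}$. A computation parallel to the derivation of \eqref{eq:ineq1}, but with weight $|h|^{2\beta}$ in place of $|h|^{2+2\alpha}$, yields the pointwise identity
\begin{equation*}
L v^2 + \frac{D_\gamma[\delta_h\theta]}{|h|^{2\beta}} = -2\beta\,\frac{h\cdot\delta_h\u}{|h|^2}\,v^2 + 2\,\frac{(\delta_h f)(\delta_h\theta)}{|h|^{2\beta}}.
\end{equation*}

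Next, I would argue that the supremum of $v^2$ is attained at some $(x_0,h_0,t_0)$ with $|h_0|>0$: indeed, $\theta$ is smooth for positive times by Theorem \ref{thm:H32abs}, so $v\to 0$ as $|h|\to 0$ whenever $\beta$ is strictly below the H\"older exponent of $\theta$. At this maximum, $\nabla_x v^2=\nabla_h v^2=0$ and $\Lambda^\gamma v^2(x_0)\geq 0$, so $\ddt\|\theta\|_{C^\beta}^2\leq Lv^2|_{(x_0,h_0,t_0)}$. I would then bound the dissipation from below via Lemma \ref{lem:nonlinbdd}, control the drift with Lemma \ref{lem:rieszbdd} for an optimally chosen auxiliary radius $r$, and estimate the forcing using $|\delta_h f|\leq 2\|f\|_{L^\infty}$. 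Two successive Young's inequalities are needed: one to absorb the $D_\gamma^{1/2}$-part of the drift coming from Lemma \ref{lem:rieszbdd} into a quarter of $D_\gamma/|h|^{2\beta}$, and a second (analogous to the one in the proof of Theorem \ref{prop:sob}) to absorb the remaining drift and forcing contributions into the nonlinear lower bound $\tilde c_\gamma v^{2+\gamma}/(|h|^{\gamma(1-\beta)}K_\infty^\gamma)$. A direct computation shows that the $|h_0|$-exponents appearing in the resulting error terms stay non-negative uniformly in $\gamma\in(1,\gamma_0]$ precisely when $\beta\leq\gamma/(2(1+\gamma))$; taking the infimum as $\gamma\to 1^+$ produces the bound $\beta\leq 1/4$ of the proposition. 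Since $|h_0|\leq \sqrt 2$, these $|h_0|$-factors can then be harmlessly discarded, and combining with Theorem \ref{thm:absLinf} to bound $K_\infty$ by a multiple of $\|f\|_{L^\infty}$ yields the scalar ODE
\begin{equation*}
\ddt\|\theta(t)\|_{C^\beta}^2 + c_1\,\|\theta(t)\|_{C^\beta}^{2+\gamma} \leq c_2\,\|f\|_{L^\infty}^2,
\end{equation*}
with $c_1,c_2$ independent of $\gamma$. Standard ODE comparison then gives $\|\theta(t)\|_{C^\beta}\leq c\|f\|_{L^\infty}$ for all sufficiently large $t$, proving both the absorbing property of $B_\beta$ and \eqref{eq:Calphaunifbeta}.

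The hard part is the uniformity of the constants as $\gamma\to 1^+$. The naive Young-inequality strategy produces a factor $K_\infty^{4\gamma/(\gamma-1)}$ that blows up at the critical threshold; the remedy, exactly as in \cite{CCZV15}, is to exploit the smallness of the explicit prefactor $2\beta$ in front of the drift term in the pointwise identity above, and then shrink $\beta$ further in terms of $\|f\|_{L^\infty}$ and $\gamma_0$ so that the offending constant is tamed to a multiple of $\|f\|_{L^\infty}^2$. This is precisely what forces the dependence $\beta=\beta(\|f\|_{L^\infty},\gamma_0)$ in the statement, while the absorbing radius $c\|f\|_{L^\infty}$ is uniform in $\gamma$ because the $L^\infty$-control of Theorem \ref{thm:absLinf} already is.
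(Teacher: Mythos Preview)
Your strategy is correct and captures the two essential ideas of the paper's argument: the pointwise nonlinear lower bound/Riesz estimate (Lemmas~\ref{lem:nonlinbdd}--\ref{lem:rieszbdd}) and, crucially, the use of the prefactor $\beta$ in front of the drift to kill the $K_\infty^{3\gamma/(\gamma-1)}$ blow-up by choosing $\beta\sim K_\infty^{-3\gamma/(\gamma+2)}$. Your identification of the constraint $\beta\leq\gamma/(2(1+\gamma))$ (hence $\beta\leq 1/4$) as arising from the forcing term also matches the paper exactly.

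The route, however, is genuinely different. The paper does \emph{not} work with $v=\delta_h\theta/|h|^\beta$. Instead it introduces a time-dependent regularizing weight
\[
w(x,t;h)=\frac{\delta_h\theta(x,t)}{(\xi(t)^2+|h|^2)^{\beta/2}},
\]
with $\xi$ solving an auxiliary ODE so that $\xi(0)=1$ and $\xi(t_\beta)=0$. This makes $\|w(0)\|_{L^\infty_{x,h}}\leq 2\|\theta_0\|_{L^\infty}$ finite for merely $L^\infty$ data, so the scalar ODE for $\psi=\|w\|_{L^\infty_{x,h}}^2$ starts from a controlled value; the price is an additional term $2\beta|\dot\xi|\xi(\xi^2+|h|^2)^{-1}w^2$, handled by a separate lemma. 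Your simplification avoids this machinery but relies on qualitative instantaneous smoothing of $\theta(t)$ for $t>0$ to guarantee that the supremum of $v^2$ is attained at some $|h_0|>0$, and on the super-linear structure of the ODE to ``blow down'' from a possibly infinite initial value. This is legitimate in the strictly subcritical range, though note that Theorem~\ref{thm:H32abs} is not the right reference---that result concerns absorbing sets, not instantaneous regularity; you should cite the subcritical well-posedness theory instead. The paper's $\xi$-approach is self-contained (no a priori smoothing needed) and carries over verbatim to the critical case $\gamma=1$ of \cite{CCZV15}, whereas your shortcut exploits the subcritical regularity and would not close at $\gamma=1$.
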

We claim that Theorem \ref{thm:H1abs} follows directly from this result. Indeed, thanks to \cite{CZK15}*{Theorem B.1},
once a uniform control of the type \eqref{eq:Calphaunifbeta} is established,
then the following differential inequality holds
\begin{equation}\label{eq:sobbeta}
\ddt\|\theta\|^2_{H^{2-\gamma}}+\frac14 \|\theta\|^2_{H^{2-\gamma/2}}
\leq c \left[2 c\|f\|_{L^\infty}\right]^{\frac{4\gamma}{\gamma+\beta-1}}+c\|f\|_{H^{2-\gamma}}^2,
\end{equation}
for all $t\geq 0$. The proof of the above inequality is analogous to that of \eqref{eq:sob}, but relies on the strongest a priori control
of a H\"older norm.

\subsection{Finite differences revisited}
We proceed in a similar manner as in Section \ref{sub:sob}, using the same notation for finite differences and operators.
Let $\xi:[0,\infty)\to[0,\infty)$ be a bounded decreasing differentiable function to be determined later. For 
\begin{equation*}
0<\beta \leq \frac14
\end{equation*}
to be fixed later on,
we study the evolution of the quantity $w(x,t;h)$ defined by
\begin{equation}\label{eq:w}
w(x,t;h) =\frac{|\delta_h\theta(x,t)|}{(\xi(t)^2+|h|^2)^{\beta/2}}.
\end{equation}
The main point is that when $\xi(t)=0$ we have that 
$$
\|w(t)\|_{L^\infty_{x,h}}=\esup_{x,h\in\TT} |w(x,t;h)| = \sup_{x\neq y \in\TT} \frac{|\theta(x,t)-\theta(y,t)|}{|x-y|^{\beta}} = [\theta(t)]_{C^\beta}.
$$
From \eqref{eq:findiff} and a short calculation (see~\cite{CZV14}) we obtain that
\begin{align}
L w^2+\frac{D_\gamma[\delta_h\theta] }{(\xi^2+|h|^2)^\beta}
&=2\beta |\dot\xi|\frac{\xi}{\xi^2+|h|^2}w^2 -2\beta \frac{h}{\xi^2+|h|^2}\cdot \delta_h\u \, w^2+
2\frac{\delta_hf}{(\xi^2+|h|^2)^{\beta/2}} w\notag \\
&\leq  2\beta |\dot\xi|\frac{\xi}{\xi^2+|h|^2}w^2 +2\beta \frac{|h|}{\xi^2+|h|^2}|\delta_h\u|w^2+
\frac{4\|f\|_{L^\infty}}{(\xi^2+|h|^2)^{\beta/2}} w\label{eq:ineq1beta}
\end{align}
where $\delta_h\u= \RR^\perp \delta_h\theta$. An analogous of Lemma \ref{lem:nonlinbdd} holds in this case as well.

\begin{lemma}\label{lem:nonlinbddbeta}
There exists a positive constant
$c_2$ such that
\begin{equation}\label{eq:N1beta}
\frac{D_\gamma[\delta_h\theta](x,t)}{(\xi(t)^2+|h|^2)^\beta}
\geq \frac{|w(x,t;h)|^{2+\gamma}}{c_2\|\theta(t)\|^\gamma_{L^\infty}(\xi(t)^2+|h|^2)^{\frac{\gamma(1-\beta)}{2}}} 
\end{equation}
holds for any $x,h\in \TT$ and any $t\geq 0$.
\end{lemma}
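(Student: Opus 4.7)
The plan is to reduce the pointwise estimate \eqref{eq:N1beta} to the pure $|h|$-weighted bound already established in Lemma~\ref{lem:nonlinbdd}. First I would unravel the definition \eqref{eq:w} of $w$: substituting $|w|^{2+\gamma}=|\delta_h\theta|^{2+\gamma}/(\xi^2+|h|^2)^{\beta(2+\gamma)/2}$ into the right-hand side of \eqref{eq:N1beta} and multiplying through by $(\xi^2+|h|^2)^\beta$, the exponents on the weight on the right combine as
\begin{equation*}
\tfrac{\beta(2+\gamma)}{2}-\beta+\tfrac{\gamma(1-\beta)}{2}=\tfrac{\gamma}{2},
\end{equation*}
so that \eqref{eq:N1beta} is equivalent to the pointwise inequality
\begin{equation*}
D_\gamma[\delta_h\theta](x,t)\geq \frac{1}{c_2}\,\frac{|\delta_h\theta(x,t)|^{2+\gamma}}{\|\theta(t)\|_{L^\infty}^\gamma\,(\xi(t)^2+|h|^2)^{\gamma/2}}.
\end{equation*}

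Since $\xi(t)\geq 0$, one has the trivial monotonicity $(\xi(t)^2+|h|^2)^{\gamma/2}\geq |h|^\gamma$, and therefore the target is strictly \emph{weaker} than the conclusion of Lemma~\ref{lem:nonlinbdd}; it follows at once by setting $c_2:=1/\tilde c_\gamma$. I do not expect any real obstacle here: the dissipative functional $D_\gamma[\delta_h\theta]$ genuinely only sees the singular-kernel length scale $|h|$, and the factor $\xi^2$ inside the weight of $w$ is a purely cosmetic regularization that makes $w$ smooth at $h=0$, which one can always afford to discard in a lower bound on $D_\gamma$.

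As a self-contained alternative, one could simply replay the cutoff argument used in the proof of Lemma~\ref{lem:nonlinbdd}: truncate the singular integral by $\chi(|y|/r)$ for $r\geq 4|h|$, expand the square, and use a discrete integration by parts on the cross term to extract a factor $|h|$; this produces
\begin{equation*}
D_\gamma[\delta_h\theta](x)\geq \frac{c_\gamma}{r^\gamma}|\delta_h\theta(x)|^2-c\,c_\gamma|\delta_h\theta(x)|\,\|\theta\|_{L^\infty}\frac{|h|}{r^{1+\gamma}}.
\end{equation*}
Optimizing in $r$ by the same choice $r=8c\|\theta\|_{L^\infty}|h|/|\delta_h\theta(x)|\geq 4|h|$ yields $|h|^\gamma$ in the denominator, which is finally majorized by $(\xi^2+|h|^2)^{\gamma/2}$. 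Either route completes the proof, and in both the constant $c_2$ is uniform in $\gamma\in(1,\gamma_0]$ since only $\tilde c_\gamma$ (equivalently $c_\gamma$) enters, and the bound on $\gamma$ away from $2$ has been imposed.
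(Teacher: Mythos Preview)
Your proposal is correct. Your exponent bookkeeping is right: after unwinding $w$ and clearing $(\xi^2+|h|^2)^\beta$, the claim becomes
\[
D_\gamma[\delta_h\theta](x)\geq \frac{1}{c_2}\,\frac{|\delta_h\theta(x)|^{2+\gamma}}{\|\theta\|_{L^\infty}^\gamma\,(\xi^2+|h|^2)^{\gamma/2}},
\]
which is indeed weaker than Lemma~\ref{lem:nonlinbdd} by the monotonicity $(\xi^2+|h|^2)^{\gamma/2}\geq |h|^\gamma$; so $c_2=1/\tilde c_\gamma$ works and is uniform for $\gamma\in(1,\gamma_0]$.

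The paper takes a slightly different route: instead of invoking Lemma~\ref{lem:nonlinbdd} and then relaxing $|h|^\gamma$ to $(\xi^2+|h|^2)^{\gamma/2}$, it restarts from the intermediate bound \eqref{eq:dissip1} and re-optimizes directly with the regularized scale, choosing
\[
r=\frac{8c\,\|\theta\|_{L^\infty}}{|\delta_h\theta(x)|}\,(\xi^2+|h|^2)^{1/2}\geq 4(\xi^2+|h|^2)^{1/2}\geq 4|h|.
\]
This lands on exactly the same inequality (with $(\xi^2+|h|^2)^{\gamma/2}$ in the denominator) in one stroke. Your reduction is cleaner and makes transparent that the $\xi$-regularization is harmless for this lower bound; the paper's re-optimization is more self-contained but does a tiny bit of redundant work relative to Lemma~\ref{lem:nonlinbdd}. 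Either way the content is the same.
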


\begin{proof}[Proof of Lemma~\ref{lem:nonlinbddbeta}]
Relying on \eqref{eq:dissip1}, namely
\begin{equation}
D_\gamma[\delta_h\theta](x)\geq \frac{c_\gamma}{r^\gamma}|\delta_h\theta(x)|^2-c c_\gamma|\delta_h\theta(x)|\|\theta\|_{L^\infty}\frac{|h|}{r^{1+\gamma}},
\end{equation}
a choice satisfying $r\geq 4(\xi^2+|h|^2)^{1/2}\geq 4|h|$ can be made as
$$
r=\frac{8c \|\theta\|_{L^\infty}}{|\delta_h\theta(x)|} (\xi^2+|h|^2)^{1/2},
$$
from which it follows that
\begin{align*}
D_\gamma[\delta_h\theta](x)&\geq c_\gamma\frac{|\delta_h\theta(x)|^2}{r^\gamma}\left[1-\frac18\frac{|h|}{(\xi^2+|h|^2)^{1/2}}\right]\\
&\geq  c_\gamma\frac{7|\delta_h\theta(x)|^2}{8r^\gamma}\geq\frac{|\delta_h\theta(x)|^{2+\gamma}}{\tilde{c} \|\theta\|^\gamma_{L^\infty} (\xi^2+|h|^2)^{\gamma/2}}
\end{align*}
The lower bound \eqref{eq:N1beta} follows by dividing the above inequality by $(\xi^2+|h|^2)^{\beta}$.
\end{proof}

It is now important to choose the function $\xi$ in a suitable way. To this end, we impose that $\xi$
solves the ordinary differential equation
\begin{equation}\label{eq:ODEbeta}
\dot\xi =- \xi^{1-\frac{2\gamma(1-\beta)}{2+\gamma}},\qquad \xi(0)=1.
\end{equation}
More explicitly,
\begin{equation}\label{eq:xibeta}
\xi(t)=\begin{cases}
\displaystyle \left[1-\frac{2\gamma}{2+\gamma}(1-\beta) t\right]^{\frac{2+\gamma}{2\gamma(1-\beta)}}, \quad &\text{if } t\in [0,t_\beta],\\ \\
0,\quad &\text{if } t \in  (t_\beta,\infty),
\end{cases}
\end{equation}
where 
\begin{equation}\label{eq:regtimebeta}
t_\beta=\frac{2+\gamma}{2\gamma(1-\beta)}.
\end{equation}
We then have the following result.
\begin{lemma}\label{lem:ODEbeta}
Assume that the function $\xi:[0,\infty)\to[0,\infty)$ is given by \eqref{eq:xibeta}. Then
 the estimate
\begin{equation}\label{eq:est1beta}
2\beta |\dot\xi(t)|\frac{\xi(t)}{\xi(t)^2+|h|^2}|w(x,t;h)|^2\leq \frac{|w(x,t;h)|^{2+\gamma}}{8c_2\|\theta(t)\|^\gamma_{L^\infty}(\xi(t)^2+|h|^2)^{\frac{\gamma(1-\beta)}{2}}}  +c\|\theta(t)\|^2_{L^\infty}
\end{equation}
holds pointwise for $x,h \in \TT$ and $t\geq 0$, where $c_2$ is the same constant appearing in \eqref{eq:N1beta}.
\end{lemma}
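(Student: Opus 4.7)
The estimate \eqref{eq:est1beta} is fundamentally an exercise in Young's inequality, with the ODE \eqref{eq:ODEbeta} hand-picked so that the remainder is controlled by $\|\theta\|_{L^\infty}^2$ uniformly in $\xi$ and $h$. I would proceed as follows.

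\textbf{Step 1 (apply Young).} Write the left-hand side as the product $w^2 \cdot X$ with
$$X := 2\beta |\dot\xi|\,\frac{\xi}{\xi^2+|h|^2}.$$
Apply Young's inequality with conjugate exponents $p = (2+\gamma)/2$ and $q = (2+\gamma)/\gamma$, in the weighted form $w^2 \, X \leq \varepsilon\, w^{2+\gamma} + C\,\varepsilon^{-2/\gamma}\, X^{(2+\gamma)/\gamma}$. Choose
$$\varepsilon = \frac{1}{8c_2\,\|\theta\|_{L^\infty}^\gamma\,(\xi^2+|h|^2)^{\gamma(1-\beta)/2}},$$
so that the first term is precisely the one on the right-hand side of \eqref{eq:est1beta}. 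It then remains to check that
$$C\,\varepsilon^{-2/\gamma}\, X^{(2+\gamma)/\gamma} \leq c\,\|\theta\|_{L^\infty}^2.$$

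\textbf{Step 2 (use the ODE to balance powers).} Expanding, the left-hand side of this remaining inequality equals a constant times
$$\|\theta\|_{L^\infty}^2 \,(\xi^2+|h|^2)^{1-\beta}\,|\dot\xi|^{(2+\gamma)/\gamma}\,\frac{\xi^{(2+\gamma)/\gamma}}{(\xi^2+|h|^2)^{(2+\gamma)/\gamma}}.$$
Using $\xi \leq (\xi^2+|h|^2)^{1/2}$ to absorb the numerator, and plugging in $|\dot\xi|=\xi^{1-2\gamma(1-\beta)/(2+\gamma)}$ from \eqref{eq:ODEbeta}, the total power of $\xi$ coming from $|\dot\xi|^{(2+\gamma)/\gamma}$ simplifies to $(2-\gamma)/\gamma + 2\beta$, which is nonnegative. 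Bounding this factor by $(\xi^2+|h|^2)^{(2-\gamma)/(2\gamma)+\beta}$ and collecting all exponents of $(\xi^2+|h|^2)$ gives
$$\frac{2-\gamma}{2\gamma} + \beta \; + \; \frac{2+\gamma}{2\gamma} \; - \; \frac{2+\gamma}{\gamma} \; + \; (1-\beta) \; = \; 0,$$
so that the $(\xi^2+|h|^2)$ factors cancel completely, leaving the uniform bound $c\,\|\theta\|_{L^\infty}^2$.

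\textbf{Main obstacle.} The argument is entirely bookkeeping, but the bookkeeping is delicate: the exponent $1-2\gamma(1-\beta)/(2+\gamma)$ in \eqref{eq:ODEbeta} is chosen so that precisely these two cancellations happen — the power of $\xi$ comes out nonnegative (so one can pass to $(\xi^2+|h|^2)$), and the resulting power of $(\xi^2+|h|^2)$ sums to $0$. Any other choice of the ODE would produce either a singular factor in $h$ (when $h\to 0$) or a divergent factor as $\xi\to 0$ (i.e. near the critical time $t_\beta$), and the forthcoming $L^\infty_{x,h}$ maximum principle argument driving the H\"older absorbing set in Proposition~\ref{thm:Cbeta} would fail. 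Once the exponent identity is verified, the lemma follows directly, and one can make the constant $c$ explicit and independent of $\gamma\in(1,\gamma_0]$ since $c_\gamma$ is uniformly bounded on this range.
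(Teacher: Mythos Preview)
Your proposal is correct and follows essentially the same approach as the paper: both arguments rest on Young's inequality with conjugate exponents $p=(2+\gamma)/2$, $q=(2+\gamma)/\gamma$, combined with the ODE \eqref{eq:ODEbeta} for $\xi$ and the elementary bound $\xi\le(\xi^2+|h|^2)^{1/2}$. The only cosmetic difference is the order of operations --- the paper first simplifies $2\beta|\dot\xi|\,\xi/(\xi^2+|h|^2)\le\tfrac12(\xi^2+|h|^2)^{-\gamma(1-\beta)/(2+\gamma)}$ (using also $\beta\le\tfrac14$) and then applies Young, whereas you apply Young first and carry out the exponent bookkeeping on the remainder --- but the substance is identical.
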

\begin{proof}[Proof of Lemma~\ref{lem:ODEbeta}]
We again suppress the $t$-dependence in all the estimates below. 
In view of \eqref{eq:ODEbeta} and the fact that $\beta\leq 1/4$, a simple computation shows that
$$
2\beta |\dot\xi|\frac{\xi}{\xi^2+|h|^2}|w(x;h)|^2\leq \frac12\frac{\xi^{2-\frac{2\gamma(1-\beta)}{2+\gamma}}}{\xi^2+|h|^2}|w(x;h)|^2
\leq  \frac12\frac{|w(x;h)|^2}{(\xi^2+|h|^2)^{\frac{\gamma(1-\beta)}{2+\gamma}}}.
$$
Therefore, the $\eps$-Young inequality
\begin{equation}\label{eq:youngbeta}
ab\leq \frac{2\eps}{2+\gamma} a^\frac{2+\gamma}{2}+\frac{\gamma}{(2+\gamma)\eps^{2/\gamma}}b^\frac{2+\gamma}{\gamma},\qquad a,b,\eps>0
\end{equation}
with 
$$
\eps=\frac{\gamma+2}{16c_2\|\theta\|^\gamma_{L^\infty}}
$$ 
and $b=1/2$ implies that
$$
2\beta |\dot\xi|\frac{\xi}{\xi^2+|h|^2}|w(x;h)|^2
\leq\frac{|w(x;h)|^{2+\gamma}}{8c_2\|\theta\|^\gamma_{L^\infty}(\xi^2+|h|^2)^{\frac{\gamma(1-\beta)}{2}}}
+c\|\theta\|^2_{L^\infty},
$$
which is what we claimed.
\end{proof}
In the same fashion, we can estimate the forcing term appearing in \eqref{eq:ineq1beta}.

\begin{lemma}\label{lem:forcebeta}
For every $x,h \in \TT$ and $t\geq 0$ we have
\begin{equation}\label{eq:est2beta}
\frac{4\|f\|_{L^\infty}}{(\xi(t)^2+|h|^2)^{\beta/2}} w(x,t;h)\leq 
\frac{|w(x,t;h)|^{2+\gamma}}{8c_2\|\theta(t)\|^\gamma_{L^\infty}(\xi(t)^2+|h|^2)^{\frac{\gamma(1-\beta)}{2}}}
+ c\|f\|_{L^\infty}^\frac{2+\gamma}{1+\gamma}\|\theta(t)\|^\frac{\gamma}{1+\gamma}_{L^\infty},
\end{equation}
where $c_2$ is the same constant appearing in \eqref{eq:N1beta}.
\end{lemma}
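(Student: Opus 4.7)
The plan is to mirror the proof of Lemma~\ref{lem:ODEbeta} by applying a weighted Young inequality, with conjugate exponents now tailored to the \emph{linear} factor of $w$ appearing on the left-hand side of \eqref{eq:est2beta}. Since the target first term on the right involves $w^{2+\gamma}$, the natural conjugate pair is
$$
p=2+\gamma, \qquad q=\frac{2+\gamma}{1+\gamma},
$$
rather than the pair used in \eqref{eq:youngbeta}.

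Concretely, I would factor the left-hand side of \eqref{eq:est2beta} as $a\cdot b$ by choosing
$$
a=\frac{w(x,t;h)}{(\xi(t)^2+|h|^2)^{\gamma(1-\beta)/(2(2+\gamma))}\,\|\theta(t)\|_{L^\infty}^{\gamma/(2+\gamma)}},
$$
so that $a^{2+\gamma}$ reproduces exactly the weight $\|\theta\|_{L^\infty}^{-\gamma}(\xi^2+|h|^2)^{-\gamma(1-\beta)/2}$ appearing in the claimed first term. The complementary factor $b$ is then forced by the identity $a\cdot b=4\|f\|_{L^\infty}(\xi^2+|h|^2)^{-\beta/2}w$ to be
$$
b=4\|f\|_{L^\infty}\,\|\theta\|_{L^\infty}^{\gamma/(2+\gamma)}\,(\xi^2+|h|^2)^{\eta},\qquad \eta=\frac{\gamma-2\beta(1+\gamma)}{2(2+\gamma)}.
$$
Applying the $\eps$-Young inequality in the form $ab\leq \frac{\eps}{2+\gamma}a^{2+\gamma}+C_\eps\, b^{(2+\gamma)/(1+\gamma)}$ with the choice $\eps=(2+\gamma)/(8c_2)$ generates the first term of \eqref{eq:est2beta} with exactly the desired constant, while the second term has the correct homogeneity $\|f\|_{L^\infty}^{(2+\gamma)/(1+\gamma)}\|\theta\|_{L^\infty}^{\gamma/(1+\gamma)}$, multiplied by an extra factor of $(\xi^2+|h|^2)^{\eta(2+\gamma)/(1+\gamma)}$.

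The only remaining delicate point, and the main obstacle in the bookkeeping, is to absorb this extra $(\xi^2+|h|^2)$-factor into the generic constant. The key observation is that $\eta\geq 0$ whenever $\gamma\geq 1$ and $\beta\leq 1/4$: indeed $\gamma-2\beta(1+\gamma)\geq 0$ rearranges to $\beta\leq \gamma/(2(1+\gamma))$, and $\gamma/(2(1+\gamma))\geq 1/4$ holds exactly when $\gamma\geq 1$. Combined with $\xi(t)\leq \xi(0)=1$ from \eqref{eq:xibeta} and the uniform boundedness of $|h|$ on $\TT$, this yields a uniform bound on $(\xi^2+|h|^2)^{\eta(2+\gamma)/(1+\gamma)}$, which can be absorbed into the generic constant $c$, completing the proof of \eqref{eq:est2beta}.
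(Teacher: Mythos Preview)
Your proposal is correct and essentially identical to the paper's own proof: both apply the $\eps$-Young inequality \eqref{eq:youngbeta2} with conjugate exponents $p=2+\gamma$, $q=(2+\gamma)/(1+\gamma)$ and the same choice of $\eps$, and then observe that the residual power $(\xi^2+|h|^2)^{\frac{\gamma}{2(1+\gamma)}-\beta}$ is harmless because $\beta\leq 1/4\leq \gamma/(2(1+\gamma))$ for $\gamma\geq 1$ and $\xi^2+|h|^2$ is uniformly bounded. Your exponent $\eta(2+\gamma)/(1+\gamma)$ is exactly the paper's $\frac{\gamma}{2(1+\gamma)}-\beta$, so the bookkeeping matches perfectly.
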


\begin{proof}[Proof of Lemma~\ref{lem:forcebeta}]
Applying the Young inequality
\begin{equation}\label{eq:youngbeta2}
ab\leq \frac{\eps}{2+\gamma} a^{2+\gamma}+\frac{1+\gamma}{(2+\gamma)\eps^{1/(1+\gamma)}}b^\frac{2+\gamma}{1+\gamma},\qquad a,b,\eps>0
\end{equation}
we infer that
$$
\frac{4\|f\|_{L^\infty}}{(\xi^2+|h|^2)^{\beta/2}} w(x;h)\leq 
\frac{|w(x;h)|^{2+\gamma}}{8c_2\|\theta\|^\gamma_{L^\infty}(\xi^2+|h|^2)^{\frac{\gamma(1-\beta)}{2}}}+c (\xi^2+|h|^2)^{\frac{\gamma}{2(1+\gamma)}-\beta}
\|f\|_{L^\infty}^\frac{2+\gamma}{1+\gamma}\|\theta\|^\frac{\gamma}{1+\gamma}_{L^\infty}.
$$
The conclusion follows from the assumption $\beta\leq 1/4$ and the bounds $\xi,|h|\leq 1$.
\end{proof}
If we now apply the bounds \eqref{eq:N1beta}, \eqref{eq:est1beta} and \eqref{eq:est2beta} to \eqref{eq:ineq1beta}, we end up with
\begin{equation}\label{eq:ineq2beta}
\begin{aligned}
L w^2+\frac{1}{2}\frac{ D_\gamma[\delta_h\theta] }{(\xi^2+|h|^2)^\beta}&+\frac{|w|^{2+\gamma}}{4c_2\|\theta\|^\gamma_{L^\infty}(\xi^2+|h|^2)^{\frac{\gamma(1-\beta)}{2}}}\\
&\leq 2\beta \frac{|h|}{\xi^2+|h|^2}|\delta_h\u|w^2+c\left[\|\theta\|^2_{L^\infty}+ \|f\|_{L^\infty}^\frac{2+\gamma}{1+\gamma}\|\theta\|^\frac{\gamma}{1+\gamma}_{L^\infty}\right].
\end{aligned}
\end{equation}

\subsection{Estimates on the nonlinear term}
We would like to stress once more that the only restriction on $\beta$ so far has consisted in
imposing $\beta\in(0,1/4]$. This arose only in the proof of Lemma \ref{lem:forcebeta}.
In order to deal with the Riesz-transform contained in $\delta_h\u$, the H\"older exponent will be
further restricted in terms of the initial datum $\theta_0$ and the forcing term $f$. It is crucial
that this restriction only depends on $\|\theta_0\|_{L^\infty}$ and $\|f\|_{L^\infty}$.
\begin{lemma}\label{lem:rieszbddbeta}
Suppose that $\theta_0\in L^\infty$, and set 
\begin{equation}\label{eq:alphabeta}
\beta=\min\left\{\frac{1}{c_3K_\infty^\frac{3\gamma}{\gamma+2}},\frac14\right\}, \qquad K_\infty=\|\theta_0\|_{L^\infty}+\frac{1}{\kappa}\|f\|_{L^\infty},
\end{equation}
for a universal constant $c_3\geq 64$. Then
\begin{align}
2\beta \frac{|h||\delta_h\u(x,t)|}{\xi(t)^2+|h|^2}|w(x,t;h)|^2&\leq  \frac{1}{2} \frac{D_\gamma[\delta_h\theta](x,t)}{(\xi(t)^2+|h|^2)^\beta}\notag\\
&\quad+ \frac{1}{8c_2K^\gamma_\infty(\xi(t)^2+|h|^2)^{\frac{\gamma(1-\beta)}{2}}} |w(x,t;h)|^{2+\gamma} + c \left(\frac12\right)^\frac{1}{\gamma-1}\label{eq:est3beta},
\end{align}
for every $x,h \in \TT$ and $t\geq 0$, where $c_2$ is the same constant appearing in \eqref{eq:N1beta}.
\end{lemma}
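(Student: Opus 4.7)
The plan is to pair Lemma \ref{lem:rieszbdd} with Lemma \ref{lem:nonlinbddbeta}: split $|\delta_h\u|$ at a radius $r$ into a ``near'' piece controlled by $(D_\gamma[\delta_h\theta])^{1/2}$ and a ``far'' piece controlled by $K_\infty$, optimize $r$, and then apply an $\epsilon$-Young step to trade the resulting super-linear remainder against the cubic lower bound $|w|^{2+\gamma}/[K_\infty^\gamma(\xi^2+|h|^2)^{\gamma(1-\beta)/2}]$ supplied by Lemma \ref{lem:nonlinbddbeta}. The subcriticality $\gamma>1$ will enter precisely as the admissibility condition for the Young exponents, while the hypothesis on $\beta$ will control the residual constant.

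First I would use $|h|/(\xi^2+|h|^2)\leq(\xi^2+|h|^2)^{-1/2}$ together with Lemma \ref{lem:rieszbdd} to bound the left-hand side, for any $r\geq 4(\xi^2+|h|^2)^{1/2}$, by
\begin{equation*}
\frac{2c\beta \, r^{\gamma/2}(D_\gamma[\delta_h\theta])^{1/2}|w|^2}{(\xi^2+|h|^2)^{1/2}} + \frac{2c\beta K_\infty|w|^2}{r}.
\end{equation*}
A weighted Cauchy--Schwartz step on the first summand would peel off $\tfrac12 D_\gamma[\delta_h\theta]/(\xi^2+|h|^2)^\beta$ and leave a remainder of order $\beta^2 r^\gamma|w|^4/(\xi^2+|h|^2)^{1-\beta}$. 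I would balance this against the second summand by the optimal choice
\begin{equation*}
r = \left[\frac{K_\infty(\xi^2+|h|^2)^{1-\beta}}{\beta|w|^2}\right]^{1/(\gamma+1)},
\end{equation*}
and then verify admissibility $r\geq 4(\xi^2+|h|^2)^{1/2}$: unwinding the power and using $|\delta_h\theta|\leq 2K_\infty$ together with the boundedness of $\xi^2+|h|^2$, this reduces to $\beta K_\infty \leq c$, which follows from the hypothesis $\beta K_\infty^{3\gamma/(\gamma+2)}\leq 1/c_3$ for $c_3$ sufficiently large (using $3\gamma/(\gamma+2)>1$).

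With this $r$, the two balanced contributions collapse, up to the bounded factor $(\xi^2+|h|^2)^{(\gamma-1)(1-\beta)/(\gamma+1)}$, to $c\,\beta^{(\gamma+2)/(\gamma+1)}K_\infty^{3\gamma/(\gamma+1)}\,u^{2/(\gamma+1)}$, where $u:=|w|^{2+\gamma}/[K_\infty^\gamma(\xi^2+|h|^2)^{\gamma(1-\beta)/2}]$ is exactly the target quantity from Lemma \ref{lem:nonlinbddbeta}. I would then invoke $\epsilon$-Young with the conjugate pair $p=(\gamma+1)/2$ and $q=(\gamma+1)/(\gamma-1)$, both strictly greater than $1$ precisely because $\gamma>1$, taking $\epsilon=1/(16c_2)$: this absorbs the $u^{2/(\gamma+1)}$ factor into $\tfrac{1}{8c_2}u$ (thus matching the middle term on the right-hand side of \eqref{eq:est3beta}) and leaves a residual constant bounded by $[(16c_2)^2/c_3^{\gamma+2}]^{1/(\gamma-1)}$, which for $c_3\geq 64$ is at most $c(1/2)^{1/(\gamma-1)}$.

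The main obstacle is the simultaneous calibration of all exponents so that (i) the Young exchange reproduces exactly the $K_\infty^\gamma$-weighting prescribed by Lemma \ref{lem:nonlinbddbeta}, thereby using the subcritical $L^\infty$ bound sharply, and (ii) the residual constant actually \emph{decays} like $(1/2)^{1/(\gamma-1)}$ as $\gamma\to 1^+$, uniformly in $\gamma$ on $(1,\gamma_0]$ (this decay is what will eventually allow $\beta$-independent bounds in Theorem \ref{thm:H1absbeta}). Tracking these two requirements back through the Young step is what dictates the specific exponent $3\gamma/(\gamma+2)$ in the hypothesis on $\beta$; any weaker power of $K_\infty^{-1}$ would either break admissibility of $r$ or produce a residual growing in $K_\infty$.
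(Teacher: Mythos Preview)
Your proposal is correct and follows essentially the same route as the paper: apply Lemma~\ref{lem:rieszbdd}, use Cauchy--Schwarz to extract $\tfrac12 D_\gamma[\delta_h\theta]/(\xi^2+|h|^2)^\beta$, balance the two remaining pieces by the optimal choice of $r$ (yielding the $\beta^{(\gamma+2)/(\gamma+1)}K_\infty^{\gamma/(\gamma+1)}|w|^{2+2/(\gamma+1)}$ term), and then apply Young with exponents $(\gamma+1)/2$ and $(\gamma+1)/(\gamma-1)$ to obtain the $|w|^{2+\gamma}$ term and the residual $c\beta^{(\gamma+2)/(\gamma-1)}K_\infty^{3\gamma/(\gamma-1)}$, which the hypothesis on $\beta$ turns into $c(1/2)^{1/(\gamma-1)}$. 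The only cosmetic differences are that the paper verifies the slightly weaker admissibility $r\geq 4|h|$ (rather than your $r\geq 4(\xi^2+|h|^2)^{1/2}$) and does not introduce the shorthand $u$; note also that your reduction ``$\beta K_\infty\leq c$'' tacitly uses $\beta\leq 1/4$ when $K_\infty<1$.
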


\begin{proof}[Proof of Lemma~\ref{lem:rieszbddbeta}]
Using Lemma \ref{lem:rieszbdd}, for $r\geq 4|h|$ we have the upper bound
$$
|\delta_h\u(x,t)|\leq 
c\left[ r^{\gamma/2} \big(D_\gamma[\delta_h\theta](x,t)\big)^{1/2}+\frac{|h|\|\theta(t)\|_{L^\infty} }{r}\right],
$$
pointwise in $x,h\in \TT$ and $t\geq 0$. Using the Cauchy-Schwarz inequality, we deduce that
\begin{align*}
\frac{2\beta  |h|}{\xi^2+|h|^2}|\delta_h\u(x)||w(x;h)|^2
&\leq \frac{2\beta}{(\xi^2+|h|^2)^{1/2}}|\delta_h\u(x)||w(x;h)|^2\\
&\leq \frac{1}{2} \frac{D_\gamma[\delta_h\theta](x)}{(\xi^2+|h|^2)^\beta} + c\left[\frac{\beta^2}{(\xi^2+|h|^2)^{1-\beta}}r^\gamma |w(x;h)|^4 +\beta \frac{\|\theta\|_{L^\infty}}{r}|w(x;h)|^2\right].
\end{align*}
We then choose $r$ such that
$$
\frac{\beta^2}{(\xi^2+|h|^2)^{1-\beta}}r^\gamma |w(x;h)|^4 =\beta \frac{\|\theta\|_{L^\infty}}{r}|w(x;h)|^2,
$$
namely
$$
r=\frac{\|\theta\|^\frac{1}{1+\gamma}_{L^\infty} (\xi^2+|h|^2)^{\frac{1-\beta}{1+\gamma}}}{\beta^\frac{1}{1+\gamma}w(x;h)^\frac{2}{1+\gamma}}=
\frac{\|\theta\|^\frac{1}{1+\gamma}_{L^\infty}(\xi^2+|h|^2)^\frac{1}{1+\gamma}}{\beta^\frac{1}{1+\gamma}|\delta_h\theta(x)|^\frac{2}{1+\gamma}}.
$$
In view of \eqref{eq:alphabeta}, this is a feasible choice, since $\gamma>1$, $|h|\leq1$ and 
$$
r\geq \frac{\|\theta\|^\frac{1}{1+\gamma}_{L^\infty}}{4^\frac{1}{1+\gamma}\beta^\frac{1}{1+\gamma}\|\theta\|^\frac{2}{1+\gamma}_{L^\infty}}|h|^\frac{2}{1+\gamma}
=\frac{1}{4^\frac{1}{1+\gamma}\beta^\frac{1}{1+\gamma}\|\theta\|^\frac{1}{1+\gamma}_{L^\infty}}|h|^\frac{2}{1+\gamma}\geq 
\frac{1}{4^\frac{1}{1+\gamma}\beta^\frac{1}{1+\gamma}K_\infty^\frac{1}{1+\gamma}}|h|^\frac{2}{1+\gamma}\geq 4|h|.
$$
Thus, thanks to \eqref{eq:alphabeta}, we obtain 
\begin{align*}
2\beta \frac{|h|}{\xi^2+|h|^2}|\delta_h\u(x)||w(x;h)|^2
&\leq \frac{1}{2} \frac{D_\gamma[\delta_h\theta](x)}{(\xi^2+|h|^2)^\beta}
+ c\frac{\beta^\frac{2+\gamma}{1+\gamma}\|\theta\|^\frac{\gamma}{1+\gamma}_{L^\infty} }{(\xi^2+|h|^2)^{\frac{1-\beta}{1+\gamma}}} |w(x;h)|^{2+\frac{2}{1+\gamma}}\\
&\leq \frac{1}{2} \frac{D_\gamma[\delta_h\theta](x)}{(\xi^2+|h|^2)^\beta}
+ c\frac{\beta^\frac{2+\gamma}{1+\gamma}K_\infty^\frac{\gamma}{1+\gamma}}{(\xi^2+|h|^2)^{\frac{1-\beta}{1+\gamma}}} |w(x;h)|^{2+\frac{2}{1+\gamma}}.
\end{align*}
By using the same H\"older exponent as we previously did in \eqref{eq:ineq3}, we have
\begin{align*}
 c\frac{\beta^\frac{2+\gamma}{1+\gamma}K_\infty^\frac{\gamma}{1+\gamma}}{(\xi^2+|h|^2)^{\frac{1-\beta}{1+\gamma}}} |w(x;h)|^{2+\frac{2}{1+\gamma}}
 &\leq \frac{1}{8c_2K^\gamma_\infty(\xi^2+|h|^2)^{\frac{\gamma(1-\beta)}{2}}} |w(x;h)|^{2+\gamma}+ c \beta^\frac{\gamma+2}{\gamma-1} K_\infty^\frac{{3\gamma}}{\gamma-1} (\xi^2+|h|^2)^{1-\beta}\\
 &\leq \frac{1}{8c_2K^\gamma_\infty(\xi^2+|h|^2)^{\frac{\gamma(1-\beta)}{2}}} |w(x;h)|^{2+\gamma}+ c \beta^\frac{\gamma+2}{\gamma-1} K_\infty^\frac{{3\gamma}}{\gamma-1}.
\end{align*}
In view of the restriction on $\beta$ given in \eqref{eq:alphabeta}, we further estimate the last term in the right-hand side above as 
$$
c \beta^\frac{\gamma+2}{\gamma-1} K_\infty^\frac{{3\gamma}}{\gamma-1}\leq  c \left(\frac{1}{2}\right)^\frac{1}{\gamma-1},
$$
so that we can conclude the proof.
\end{proof}

We now proceed with the proof
of H\"older $C^\beta$ estimates, where the exponent $\beta$ is given by \eqref{eq:alphabeta}.

\subsection{Locally uniform H\"older estimates}
From the global bound \eqref{eq:globalLinf}, \eqref{eq:ineq2beta} and the estimate \eqref{eq:est3beta},
it follows that for $\beta$ complying with  \eqref{eq:alphabeta} the function $w^2$ satisfies
\begin{equation*}
L w^2+\frac{|w|^{2+\gamma}}{8c_2K_\infty^\gamma(\xi^2+|h|^2)^{\frac{1-\beta}{2}}}
\leq c\left[K_\infty^2+ \|f\|_{L^\infty}^\frac{2+\gamma}{1+\gamma}K_\infty^\frac{\gamma}{1+\gamma}\right].
\end{equation*}
Taking into account that $\xi^2+|h|^2\leq 1 + {\rm diam}(\TT)^2 = 2$ for all $h \in \TT$, and that $\|f\|_{L^\infty}\leq c_0 K_{\infty}$, we arrive at
\begin{equation}\label{eq:ineq3beta}
L w^2+\frac{|w|^{2+\gamma}}{32c_2K_\infty^\gamma}
\leq cK_\infty^2
\end{equation} 
which holds pointwise for $(x,h) \in \TT \times \TT$.
In the next lemma we show that the above inequality gives uniform control on the $C^\beta$ seminorm
of the solution.

\begin{lemma}\label{lem:Calphabeta}
Assume that $\theta_0\in L^\infty$, and fix $\beta$ as in \eqref{eq:alphabeta}. There exists a time $t_\beta>0$
such that the solution to \eqref{eq:SQGgamma} with initial datum $\theta_0$ is $\beta$-H\"older continuous. Specifically,
\begin{equation*}
[\theta(t)]_{C^\beta}\leq c \left[\|\theta_0\|_{L^\infty}+\frac{1}{\kappa}\|f\|_{L^\infty}\right], \qquad \forall t\geq t_\beta
=\frac{2+\gamma}{2\gamma(1-\beta)}.
\end{equation*}
\end{lemma}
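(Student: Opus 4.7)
The strategy is to upgrade the pointwise differential inequality \eqref{eq:ineq3beta} into a scalar ODI for the time-dependent quantity
$$
M(t)= \esup_{x,h\in\TT} w(x,t;h)^{2},
$$
and then to exploit the vanishing of $\xi$ at $t_\beta$ to translate the resulting control of $M(t)$ into the claimed H\"older bound. Concretely, I would first verify the size of the initial data: since $\xi(0)=1$ and $|\delta_h\theta_0(x)|\le 2\|\theta_0\|_{L^\infty}\le 2K_\infty$, we have $M(0)\le 4K_\infty^2$.

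Next, I would establish an $L^\infty_{x,h}$ maximum principle for \eqref{eq:ineq3beta}. At a maximum point $(x_*,h_*)$ of $w^2(\cdot,t;\cdot)$, the two transport terms in $L$ vanish since $\nabla_x w^2(x_*,t;h_*)=0$ and $\nabla_h w^2(x_*,t;h_*)=0$, while the fractional Laplacian (which acts in the $x$ variable) satisfies $\Lambda^\gamma_x w^2(x_*,t;h_*)\ge 0$ by the standard pointwise inequality at a maximum for the singular integral representation \eqref{eq:fraclapc}. Hence, differentiating the supremum in $t$ (in the Rademacher/viscosity sense, which is standard in this setting) yields
\begin{equation}\label{eq:odi-plan}
\dot M(t)+\frac{M(t)^{(2+\gamma)/2}}{32\, c_2 K_\infty^{\gamma}} \le c K_\infty^{2}.
\end{equation}
The mild technical point here is justifying the evaluation at the maximum and differentiating the sup; this can be done by first working with a smooth approximation (e.g. the viscous regularization), deriving \eqref{eq:odi-plan} for the approximation, and then passing to the limit since the inequality is stable under weak limits. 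This is really the only subtle part of the argument.

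Once \eqref{eq:odi-plan} is at hand, I would compare $M$ with the equilibrium of the ODI. Setting
$$
M_*=\bigl(32\, c_2 c\bigr)^{2/(2+\gamma)} K_\infty^{(2\gamma+4)/(2+\gamma)}\le c' K_\infty^{2},
$$
we have $\dot M(t)<0$ whenever $M(t)>M_*$, so that
$$
M(t)\le \max\{M(0),M_*\}\le c K_\infty^{2},\qquad \forall\, t\ge 0,
$$
with $c$ independent of $\gamma\in(1,\gamma_0]$.

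Finally, I would specialize this bound to $t\ge t_\beta$. Since $\xi(t)=0$ there by \eqref{eq:xibeta}, one obtains
$$
M(t)=\esup_{x,h\in\TT}\frac{|\delta_h\theta(x,t)|^{2}}{|h|^{2\beta}}=[\theta(t)]_{C^\beta}^{2},
$$
and plugging in the definition \eqref{eq:globalLinf} of $K_\infty$ gives the claim
$$
[\theta(t)]_{C^\beta}\le c\Bigl[\|\theta_0\|_{L^\infty}+\tfrac{1}{\kappa}\|f\|_{L^\infty}\Bigr],\qquad \forall\, t\ge t_\beta.
$$
The main potential obstacle is the rigorous justification of the maximum principle step leading to \eqref{eq:odi-plan}; this is handled by the usual viscous approximation together with the fact that $w$ is continuous and periodic in $(x,h)$ for smooth solutions, so the supremum is attained on the compact set $\TT\times\TT$.
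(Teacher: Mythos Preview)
Your proposal is correct and follows essentially the same route as the paper: define $\psi(t)=\|w(t)\|_{L^\infty_{x,h}}^2$, pass from \eqref{eq:ineq3beta} to the scalar ODI $\dot\psi+\frac{1}{32c_2K_\infty^\gamma}\psi^{1+\gamma/2}\le cK_\infty^2$, use the initial bound $\psi(0)\le 4K_\infty^2$ and ODE comparison to get $\psi(t)\le cK_\infty^2$ for all $t\ge 0$, and then read off the $C^\beta$ seminorm once $\xi(t)=0$ for $t\ge t_\beta$. Your treatment is in fact more explicit than the paper's about justifying the maximum-principle step and the comparison with the equilibrium, but the argument is the same.
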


\begin{proof}[Proof of Lemma~\ref{lem:Calphabeta}]
Thanks to \eqref{eq:ineq3beta}, the function
$$
\psi(t)=\|w(t)\|_{L^\infty_{x,h}}^2
$$
satisfies the differential inequality
\begin{equation}\label{eq:diffineqbeta}
\frac{\dd}{\dd t} \psi+\frac{1}{32c_2K^\gamma_\infty} \psi^{1+\gamma/2}\leq cK_\infty^2.
\end{equation}
From the definition of $w$,
$$
\psi(0)\leq\frac{4\|\theta_0\|^2_{L^\infty}}{\xi_0^{2\beta}}=4\|\theta_0\|^2_{L^\infty}\leq 4K_\infty^2.
$$
From a standard comparison for ODEs it immediately follows that 
\begin{equation}\label{eq:bddpsibeta}
\psi(t)\leq cK^2_\infty, \qquad \forall t\geq 0,
\end{equation}
for some sufficiently large constant $c>0$.
With \eqref{eq:bddpsibeta} at hand, we have thus proven that 
$$
[\theta(t)]_{C^\beta}^2=\psi(t) \leq c K_\infty^2, \qquad \forall t\geq t_\beta,
$$
where $t_\beta$ is given by \eqref{eq:regtimebeta}, thereby concluding the proof.
\end{proof}
It is now clear that the proof of Proposition \ref{thm:Cbeta} is now achieved. Indeed,
Lemma \ref{lem:Calphabeta} provides a uniform (with respect to the initial datum in the
$L^\infty$ absorbing set) estimate of the H\"older seminorm, with an associated regularization
time that, ultimately, depends only on $\|f\|_{L^\infty}$ through the H\"older exponent $\beta$.

\section{Upper semicontinuity of the attractors}\label{sec:stable}
To conclude the article, it remains to prove the second part of Theorem \ref{thm:stability}.
Fix $\gamma_0\in (1,2)$ and $\gamma\in (1,\gamma_0]$.
To establish \eqref{eq:stability}, we preliminary note the following. 
Let
$$
\mathcal{U}:=\bigcup_{\gamma\in (1,\gamma_0]} A_\gamma.
$$
Calling $S_1(t):H^1\to H^1$ the semigroup generated by the critical SQG equation ($\gamma=1$), 
by the triangle inequality and the invariance properties of the global attractor, we have
\begin{align}
\dist_{H^1}(A_\gamma,A_1)&\leq \dist_{H^1}(A_\gamma, S(t)A_\gamma)+\dist_{H^1}(S(t)A_\gamma,A_1)\notag\\
&\leq \dist_{H^1}(A_\gamma, S_1(t)A_\gamma)+\dist_{H^1}(S_1(t)\mathcal{U},A_1)\notag\\
&= \dist_{H^1}(S_\gamma(t)A_\gamma, S_1(t)A_\gamma)+\dist_{H^1}(S_1(t)\mathcal{U},A_1)\label{eq:conver1},
\end{align}
for every $t>0$.
The proof of Theorem \ref{thm:stability} essentially relies on two main ingredients. The first is the uniform bound
proven in Corollary \ref{cor:unifatt} of the global attractors $A_\gamma$. This, together with the Poincar\'e inequality, 
ensures that $\mathcal{U}$
is bounded in $H^{2-\gamma_0/2}$. In turn, the second term in \eqref{eq:conver1} vanishes as $t\to\infty$, 
due to the fact that  $A_1$ attracts bounded sets of $H^1$ (and hence, in particular, bounded in $H^{2-\gamma_0/2}$), namely
\begin{align}
\lim_{t\to\infty}\dist_{H^1}(S_1(t)\mathcal{U},A_1)=0.
\end{align}
In order to deal with the first term in \eqref{eq:conver1}, we need a proper convergence estimate
(for any fixed time) of the solution of the subcritical SQG equation to that of the critical one. 
\begin{proposition}\label{prop:conver}
Let $\gamma\in [1, 3/2]$, and assume that $\theta_0\in A_\gamma$. Then
\begin{align}
\|S_\gamma(t)\theta_0-S_1(t)\theta_0\|_{H^1}\leq c(\gamma-1), \qquad \forall t\geq0,
\end{align}
where $c>0$ depends only on the uniform bound \eqref{eq:unifinal}, and is in particular \emph{independent} of $\gamma$.

\end{proposition}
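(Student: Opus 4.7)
The plan is to compare the two trajectories directly. Set $\theta^\gamma(t)=S_\gamma(t)\theta_0$ and $\theta^1(t)=S_1(t)\theta_0$, with associated velocities $\u^\gamma,\u^1$, and let $\bar\theta=\theta^\gamma-\theta^1$, $\bar\u=\RR^\perp\bar\theta$. Subtracting the two evolution equations and splitting $\Lambda^\gamma\theta^\gamma-\Lambda\theta^1=\Lambda^\gamma\bar\theta+(\Lambda^\gamma-\Lambda)\theta^1$ yields
\[
\de_t\bar\theta+\u^\gamma\cdot\nabla\bar\theta+\bar\u\cdot\nabla\theta^1+\Lambda^\gamma\bar\theta=(\Lambda-\Lambda^\gamma)\theta^1,\qquad \bar\theta(0)=0.
\]
The key point of this splitting is that the dissipation on the left is the subcritical operator $\Lambda^\gamma$ (strictly stronger than the critical $\Lambda$), while the right-hand side is an explicit residual of order $(\gamma-1)$ acting on the critical trajectory $\theta^1$.

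Next, I would perform an $H^1$ energy estimate by testing the equation with $\Lambda^2\bar\theta$:
\[
\frac{1}{2}\ddt\|\bar\theta\|_{H^1}^2+\|\bar\theta\|_{H^{1+\gamma/2}}^2=I_1+I_2+I_3,
\]
where $I_1$ collects the Kato--Ponce commutator coming from the transport by $\u^\gamma$ (the leading-order transport term vanishes since $\nabla\cdot\u^\gamma=0$), $I_2=-\int\bar\u\cdot\nabla\theta^1\,\Lambda^2\bar\theta\,\dd x$, and $I_3=\int(\Lambda-\Lambda^\gamma)\theta^1\,\Lambda^2\bar\theta\,\dd x$. The nonlinear pieces $I_1,I_2$ would be bounded using the fractional product \eqref{eq:prod}, the commutator inequality \eqref{eq:comm}, and the Sobolev embedding \eqref{eq:imbe}, together with the uniform bound $\|\theta^\gamma(t)\|_{H^{2-\gamma/2}}\leq R_{2}$ from Corollary~\ref{cor:unifatt} (valid because $\theta^\gamma(t)\in A_\gamma$) and the corresponding uniform $H^{2-\gamma/2}$ bound on $\theta^1(t)$ produced by the absorbing set of the critical SQG semigroup started from data uniformly bounded in $A_\gamma$. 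Exploiting that $\gamma/2>1/2$, I would arrange a splitting of the form
\[
|I_1|+|I_2|\leq \frac{1}{4}\|\bar\theta\|_{H^{1+\gamma/2}}^2+M\|\bar\theta\|_{H^1}^2,
\]
with $M$ depending only on the uniform attractor bound.

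For $I_3$, pairing in dual Sobolev norms and using Young,
\[
|I_3|\leq \|(\Lambda-\Lambda^\gamma)\theta^1\|_{H^{1-\gamma/2}}\,\|\bar\theta\|_{H^{1+\gamma/2}}\leq \frac{1}{4}\|\bar\theta\|_{H^{1+\gamma/2}}^2+c\|(\Lambda-\Lambda^\gamma)\theta^1\|_{H^{1-\gamma/2}}^2.
\]
The decisive new ingredient is the pointwise Fourier multiplier estimate
\[
\bigl||\xi|^\gamma-|\xi|\bigr|\leq c(\gamma-1)\,|\xi|^\gamma\log(2+|\xi|),\qquad \gamma\in[1,3/2],
\]
obtained from $|\xi|^\gamma-|\xi|=|\xi|(e^{(\gamma-1)\log|\xi|}-1)$, which, combined with the uniform regularity of $\theta^1(t)$ in a space slightly better than $H^{2-\gamma_0/2}$, yields $\|(\Lambda-\Lambda^\gamma)\theta^1\|_{H^{1-\gamma/2}}\leq c(\gamma-1)$.

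Plugging everything back into the energy inequality and using Poincar\'e (or a low/high frequency splitting) to dominate $M\|\bar\theta\|_{H^1}^2$ by a fraction of $\|\bar\theta\|_{H^{1+\gamma/2}}^2$, the goal is
\[
\ddt\|\bar\theta\|_{H^1}^2+\nu\|\bar\theta\|_{H^1}^2\leq c(\gamma-1)^2,
\]
so that Gronwall together with $\bar\theta(0)=0$ gives $\|\bar\theta(t)\|_{H^1}\leq c(\gamma-1)$ uniformly in $t$. The main obstacle is precisely securing this uniformity in $t$: a naive Gronwall produces exponential growth, so the nonlinear absorption must leave a strictly positive net coercive contribution on $\|\bar\theta\|_{H^1}^2$. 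This is where the subcritical dissipation (the gap $\gamma>1$) is essential, and where the logarithmic loss in the multiplier bound has to be traded against the sharp regularity of $\theta^1$ available on the attractor.
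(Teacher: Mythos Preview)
Your overall strategy---derive the difference equation with the subcritical dissipation $\Lambda^\gamma$ on the left, perform an $H^1$ energy estimate, bound the nonlinear terms by commutator/product inequalities, and control the residual $(\Lambda^\gamma-\Lambda)$ applied to the critical trajectory as $O(\gamma-1)$---is exactly what the paper does (with the roles of $\theta^\gamma$ and $\theta^1$ swapped in the transport/stretching decomposition, which is cosmetic). Two points deserve comment.

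First, for the residual the paper uses a cleaner multiplier lemma (Lemma~\ref{lem:gamma}):
\[
\|(\Lambda^{\gamma-1}-I)\varphi\|_{H^m}\leq \frac{\gamma-1}{s}\|\varphi\|_{H^{m+s}},\qquad s\geq \gamma-1,
\]
applied with $m=2-\gamma/2$ and $s=1/2$ (this is where the restriction $\gamma\leq 3/2$ enters). By trading a fixed half-derivative of regularity one avoids your logarithm entirely, and the resulting term $c(\gamma-1)^2\|\theta\|_{H^{5/2-\gamma/2}}^2$ feeds directly into the time-integrability furnished by the critical absorbing-set estimates. Your logarithmic bound is correct, but to use it you would need a genuine extra regularity slack on $\theta^1$ to absorb the $\log$; the paper's lemma is more economical.

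Second, and this is a genuine gap, your plan to secure the bound \emph{uniformly in $t$} by absorbing $M\|\bar\theta\|_{H^1}^2$ into the dissipation via Poincar\'e does not work: $M$ depends on the attractor bound $R_2$, which is a priori large, while Poincar\'e on $\TT$ only gives $\|\bar\theta\|_{H^{1+\gamma/2}}^2\geq \|\bar\theta\|_{H^1}^2$, not $\geq M\|\bar\theta\|_{H^1}^2$. The subcritical gap $\gamma>1$ provides no help here. The paper does not attempt this absorption. After arriving at
\[
\ddt\|\eta\|_{H^1}^2 \leq c\big[\|\theta\|_{H^{3/2}}^2+\|\theta^\gamma\|_{H^{2-\gamma/2}}^2+\|\theta^\gamma\|_{H^2}^2\big]\|\eta\|_{H^1}^2+c(\gamma-1)^2\|\theta\|_{H^{5/2-\gamma/2}}^2,
\]
it simply applies Gronwall, using that the coefficients are time-integrable over unit intervals (from Corollary~\ref{cor:cbdfbdfs} and the analogous critical SQG estimates). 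The resulting constant then depends on $t$; this is harmless for the only place the proposition is used, namely Corollary~\ref{cor:conv} and the upper-semicontinuity argument, where one first fixes $t=t_\eps$ and then sends $\gamma\to1^+$. Your worry about exponential growth under naive Gronwall is legitimate as a reading of the proposition's statement, but the fix you propose is not available, and the application does not require it.
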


The proof is carried over in the next section. We here mention a straightforward, yet crucial, consequence of the above proposition.
\begin{corollary}\label{cor:conv}
For every $t\geq 0$, there holds
\begin{equation}\label{eq:conv3}
\lim_{\gamma\to1^+}\dist_{H^1}(S_{\gamma}(t)A_\gamma, S_1(t)A_\gamma)=0.
\end{equation}
\end{corollary}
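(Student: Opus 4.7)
The plan is to read off the corollary essentially immediately from Proposition \ref{prop:conver}, since the Hausdorff semidistance has an obvious ``diagonal'' candidate. Recall that by definition
$$
\dist_{H^1}(S_\gamma(t)A_\gamma, S_1(t)A_\gamma)=\sup_{\theta_0\in A_\gamma}\inf_{\psi\in S_1(t)A_\gamma}\|S_\gamma(t)\theta_0-\psi\|_{H^1}.
$$
For every fixed $\theta_0\in A_\gamma$, the natural candidate in the infimum is $\psi=S_1(t)\theta_0$, which indeed lies in $S_1(t)A_\gamma$. This choice requires that $S_1(t)\theta_0$ makes sense, but by Corollary~\ref{cor:unifatt} the attractor $A_\gamma$ is uniformly bounded in $H^{2-\gamma/2}\subset H^1$ for $\gamma\in(1,\gamma_0]$, so $\theta_0\in H^1$ and $S_1(t)\theta_0$ is well defined.

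Fixing this choice, I would then bound the infimum by the full $H^1$-norm distance
$$
\inf_{\psi\in S_1(t)A_\gamma}\|S_\gamma(t)\theta_0-\psi\|_{H^1}\leq \|S_\gamma(t)\theta_0-S_1(t)\theta_0\|_{H^1},
$$
and apply Proposition~\ref{prop:conver} (restricting to $\gamma\in(1,3/2]$, which is harmless since we are taking $\gamma\to 1^+$) to get the bound $c(\gamma-1)$ on the right-hand side, with $c$ independent of $\theta_0$ thanks to the uniform bound \eqref{eq:unifinal}.

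Taking the supremum over $\theta_0\in A_\gamma$ preserves this uniform bound, yielding
$$
\dist_{H^1}(S_\gamma(t)A_\gamma, S_1(t)A_\gamma)\leq c(\gamma-1),
$$
which tends to zero as $\gamma\to 1^+$, for every fixed $t\geq 0$. There is really no obstacle here: all of the work has been absorbed into Proposition~\ref{prop:conver}, whose proof is the genuinely delicate step (the hard part being to control the difference of a subcritical and critical evolution in $H^1$ using only the uniform regularity of $A_\gamma$ in $H^{2-\gamma/2}$). Once that proposition is granted, the passage to the Hausdorff semidistance is just the triangle inequality applied pointwise in the attractor.
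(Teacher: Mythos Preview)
Your argument is correct and uses the same essential idea as the paper: pick the diagonal candidate $\psi=S_1(t)\theta_0$ in the infimum and invoke Proposition~\ref{prop:conver}. The only difference is packaging: the paper wraps this in a proof by contradiction (assuming the limit fails, extracting sequences $\gamma_n\to1^+$ and $\theta_{0,n}\in A_{\gamma_n}$, and deriving a contradiction from $\|S_{\gamma_n}(t)\theta_{0,n}-S_1(t)\theta_{0,n}\|_{H^1}\leq c(\gamma_n-1)\to 0$), whereas you argue directly. Your direct route is cleaner and in fact yields the quantitative bound $\dist_{H^1}(S_\gamma(t)A_\gamma,S_1(t)A_\gamma)\leq c(\gamma-1)$, which is slightly stronger than the bare limit statement the paper proves.
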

\begin{proof}
If not, there exists $t\geq 0$, $\eps>0$ and sequences $\gamma_n\in(1,\gamma_0]$ with $\gamma_n\to 1^+$ as $n\to\infty$,
$\theta_{0,n}\in A_{\gamma_n}$ such that,
$$
\inf_{\theta_0\in A_{\gamma_n}} \|S_{\gamma_n}(t)\theta_{0,n}-S_1(t)\theta_0\|_{H^1}\geq \eps, \qquad \forall n\in\N.
$$
In particular,  we necessarily have that
$$
\|S_{\gamma_n}(t)\theta_{0,n}-S_1(t)\theta_{0,n}\|_{H^1}\geq \eps, \qquad \forall n\in\N.
$$
However, Propostion \ref{prop:conver} yields the contradiction
$$
\eps\leq \|S_{\gamma_n}(t)\theta_{0,n}-S_1(t)\theta_{0,n}\|_{H^1}\leq (\gamma_n-1)\to 0, \qquad \text{as } n\to \infty.
$$
The proof is achieved.
\end{proof}

The stability
property \eqref{eq:stability} follows immediately.
Let $\eps>0$ be arbitrarily fixed. Since $A_1$ is the attractor of $S_1(t)$, there exists
$t_\eps>0$ such that
$$
\dist_{H^1}(S(t_\eps)\mathcal{U},A_1)\leq \frac{\eps}{2}.
$$
Now, Corollary \ref{cor:conv} ensures the existence of some $\gamma_\eps\in (1,\gamma_0]$ such that
$$
\dist_{H^1}(S_\gamma(t_\eps)A_\gamma, S_1(t_\eps)A_\gamma)\leq \frac{\eps}{2}, \qquad \forall \gamma\in(1, \gamma_\eps).
$$
Thus, recalling \eqref{eq:conver1} we have
$$
\dist_{H^1}(A_\gamma,A_1)\leq \dist_{H^1}(S_\gamma(t_\eps)A_\gamma, S_1(t_\eps)A_\gamma)
+\dist_{H^1}(S_1(t_\eps)\mathcal{U},A_1)\leq \eps, \qquad \forall \gamma\in(1, \gamma_\eps).
$$
Since $\eps$ was arbitrary to begin with, we can conclude that
$$
\lim_{\gamma\to1^+}\dist_{H^1}(A_\gamma,A_1)=0,
$$
as wanted.

\subsection{Convergence estimates}
We will need the following general result on the fractional laplacian, to properly estimate the difference between 
solutions of the subcritical and the critical SQG equations stated in Proposition \ref{prop:conver}.
\begin{lemma}\label{lem:gamma}
Let $m\geq 0$, $\gamma>1$ and $s\geq\gamma-1$. Then
\begin{equation}\label{eq:gamma}
\|(\Lambda^{\gamma-1}-I)\varphi\|_{H^m} \leq \frac{\gamma-1}{s}\|\varphi\|_{H^{m+s}}, \qquad \forall \varphi \in H^{m+s}.
\end{equation}
\end{lemma}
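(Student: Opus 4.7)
The plan is to diagonalize the operator $\Lambda$ via the Fourier series on $\TT$ and reduce the statement to a pointwise inequality on the eigenvalues. Since we work with mean-free functions, $\varphi$ admits a Fourier expansion $\varphi=\sum_{k\neq 0}\hat\varphi_k e^{2\pi i k\cdot x}$ and $\Lambda$ acts diagonally with eigenvalues $\lambda_k=2\pi|k|\geq 2\pi>1$. By Plancherel,
\begin{equation*}
\|(\Lambda^{\gamma-1}-I)\varphi\|_{H^m}^{2}=\sum_{k\neq 0}\lambda_k^{2m}\bigl(\lambda_k^{\gamma-1}-1\bigr)^{2}|\hat\varphi_k|^{2},\qquad \|\varphi\|_{H^{m+s}}^{2}=\sum_{k\neq 0}\lambda_k^{2(m+s)}|\hat\varphi_k|^{2},
\end{equation*}
so the lemma reduces to the pointwise estimate
\begin{equation*}
\lambda^{\gamma-1}-1\leq \frac{\gamma-1}{s}\lambda^{s}\qquad \text{for every }\lambda\geq 1,\;s\geq\gamma-1>0.
\end{equation*}

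To prove this, I would exploit that the map $\epsilon\mapsto \phi_\lambda(\epsilon):=\lambda^{\epsilon}-1=e^{\epsilon\ln\lambda}-1$ is convex on $[0,\infty)$ and vanishes at $\epsilon=0$. By the standard consequence of convexity, the difference quotient $\epsilon\mapsto\phi_\lambda(\epsilon)/\epsilon$ is nondecreasing in $\epsilon>0$. Applying this monotonicity with $\epsilon_1=\gamma-1\leq s=\epsilon_2$ gives
\begin{equation*}
\frac{\lambda^{\gamma-1}-1}{\gamma-1}\leq \frac{\lambda^{s}-1}{s}\leq\frac{\lambda^{s}}{s},
\end{equation*}
which is exactly the pointwise bound. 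Since $\lambda\geq 1$, $\lambda^{\gamma-1}-1\geq 0$, so the absolute value is redundant.

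Combining the two steps, summing the pointwise bound against $\lambda_k^{2m}|\hat\varphi_k|^2$, yields \eqref{eq:gamma}. The only potential subtlety is the normalization that ensures $\lambda_k\geq 1$ on the torus (where the difference quotient argument is clean); on a general domain one would need $\varphi$ to be spectrally supported away from eigenvalues below $1$, which is automatic here thanks to the mean-free condition.
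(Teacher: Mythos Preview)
Your proof is correct and follows the same overall strategy as the paper: diagonalize $\Lambda$ via Fourier series and reduce the estimate to the pointwise symbol bound $\lambda^{\gamma-1}-1\leq \tfrac{\gamma-1}{s}\lambda^{s}$ for $\lambda\geq 1$. The only difference is in how that elementary inequality is verified: the paper bounds the maximum of $f(x)=(x^{\gamma-1}-1)/x^{s}$ on $[1,\infty)$ directly by calculus, whereas you use the monotonicity of the difference quotient $\epsilon\mapsto(\lambda^{\epsilon}-1)/\epsilon$ coming from convexity of the exponential---an equivalent and arguably cleaner route to the same bound.
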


\begin{proof}
Using Fourier series, we have
\begin{align*}
\|(\Lambda^{\gamma-1}-I)\varphi\|_{H^m}^2&=\sum_{k \in \ZZ^2_*} |k|^{2m} (|k|^{\gamma-1}-1)^2|\hat\varphi_k|^2=\sum_{k \in \ZZ^2_*} |k|^{2m+2s}\left( \frac{|k|^{\gamma-1}-1}{|k|^s}\right)^2|\hat\varphi_k|^2\\
&\leq \left(\frac{\gamma-1}{s}\right)^2\sum_{k \in \ZZ^2_*} |k|^{2m+2s}|\hat\varphi_k|^2=\left(\frac{\gamma-1}{s}\right)^2\|\varphi\|_{H^{m+s}}^2.
\end{align*}
Above, we used the fact that the real function 
$$
f(x)=\frac{x^{\gamma-1}-1}{x^s},\qquad x\geq 1,
$$
is bounded by 1 when $s=\gamma-1$, and has a maximum at a point $\bar x\geq 1$ such that
$$
f(\bar x)=\frac{\gamma-1}{s} \frac{1}{\bar x^{\gamma + s - 1}}
$$
when $s>\gamma-1$.
\end{proof}

We therefore compare the evolution of the critical SQG equation
\begin{equation}\label{eq:critSQG} 
\begin{cases}
\de_t\theta +\u \cdot \nabla \theta+\Lambda\theta=f,\\
\theta(0)=\theta_0,\qquad\u=\nabla^\perp\Lambda^{-1}\theta,
\end{cases}
\end{equation}
and the subcritical one
\begin{equation}\label{eq:subSQG}
\begin{cases}
\de_t\theta^\gamma +\u^\gamma \cdot \nabla\theta^\gamma +\Lambda^\gamma\theta^\gamma=f,\\
\theta^\gamma(0)=\theta_0,\qquad\u^\gamma=\nabla^\perp\Lambda^{-1}\theta^\gamma,
\end{cases}
\end{equation}
with the same initial datum $\theta_0\in A_\gamma \subset H^{2-\gamma/2} \subset H^{2-\gamma_0/2}$.
The difference 
\begin{equation}
\eta=\theta-\theta^\gamma, \qquad \w= \u-\u^\gamma
\end{equation}
is easily seen to solve
\begin{equation}\label{eq:diff}
\begin{cases}
\de_t\eta +\u \cdot \nabla\eta+\w\cdot \nabla \theta^\gamma +\Lambda^\gamma\eta=(\Lambda^\gamma-\Lambda)\theta,\\
\eta(0)=0.
\end{cases}
\end{equation}
From \eqref{eq:diff}, we deduce that
\begin{align}\label{eq:h1estma}
\frac12\ddt \|\eta\|_{H^1}^2+ \|\eta\|^2_{H^{1+\gamma/2}}=
&\int_{\TT}( \u \cdot\nabla\eta) \Delta \eta\, \dd x
+\int_{\TT}(\w\cdot\nabla \theta^\gamma )\Delta\eta\, \dd x-\int_{\TT}(\Lambda^\gamma-\Lambda)\theta  \Delta \eta\, \dd x.
\end{align}
We now bound the right-hand side term by term, by making use of the Sobolev embedding \eqref{eq:imbe} in the particular cases
\begin{align}
\|\varphi\|_{L^{4/\gamma}}\leq c\| \Lambda^{1-\gamma/2}\varphi\|_{L^2},\qquad
\|\varphi\|_{L^\frac{4}{2-\gamma}}\leq c\| \Lambda^{\gamma/2}\varphi\|_{L^2}.
\end{align}
Using integration by parts and the incompressibility of $\u$, the boundedness of the Riesz transform and the Poincar\'e inequality,  
the first integral can be estimated as
\begin{align}
\left|\int_{\TT}( \u \cdot\nabla\eta) \Delta \eta\, \dd x\right| 
&\leq \|\nabla\u\|_{L^{4/\gamma}} \|\nabla \eta\|_{L^2} \|\nabla\eta\|_{L^{\frac{4}{2-\gamma}}} \notag\\
&\leq c\|\theta\|_{H^{2-\gamma/2}} \|\eta\|_{H^{1}} \|\eta\|_{H^{1+\gamma/2}} \notag\\
&\leq c\|\theta\|_{H^{3/2}} \|\eta\|_{H^{1}} \|\eta\|_{H^{1+\gamma/2}}\notag \\
&\leq c\|\theta\|_{H^{3/2}}^2 \|\eta\|_{H^{1}}^2+\frac18 \|\eta\|_{H^{1+\gamma/2}}^2\label{eq:raw1}.
\end{align}
Concerning the second term, after integration by parts, we have two contributions, namely
\begin{align}
\left|\int_{\TT}(\w\cdot\nabla \theta^\gamma )\Delta\eta\, \dd x\right| \leq \int_{\TT}|\nabla\w||\nabla \theta^\gamma||\nabla\eta|\, \dd x+\int_{\TT}|\w||\Delta \theta^\gamma||\nabla\eta|\, \dd x.
\end{align}
Since
\begin{align}
\int_{\TT}|\nabla\w||\nabla \theta^\gamma||\nabla\eta|\, \dd x
&\leq\|\nabla\w\|_{L^2}\|\nabla \theta^\gamma\|_{L^{4/\gamma}}\|\nabla\eta\|_{L^\frac{4}{2-\gamma}}\\
&\leq c \|\theta^\gamma\|_{H^{2-\gamma/2}}\|\eta\|_{H^1} \|\nabla\eta\|_{H^{1+\gamma/2}},
\end{align}
and
\begin{align}
\int_{\TT}|\w||\Delta \theta^\gamma||\nabla\eta|\, \dd x
&\leq \|\w\|_{L^{4/\gamma}} \|\Delta \theta^\gamma\|_{L^2}\|\nabla\eta\|_{L^\frac{4}{2-\gamma}}\\
&\leq  \| \theta^\gamma\|_{H^2}\|\eta\|_{H^{1-\gamma^2}}\|\eta\|_{H^{1+\gamma/2}}\\
&\leq  \| \theta^\gamma\|_{H^2}\|\eta\|_{H^{1}}\|\eta\|_{H^{1+\gamma/2}},
\end{align}
a simple use of the Young inequality leads us to
\begin{align}\label{eq:raw2}
\left|\int_{\TT}(\w\cdot\nabla \theta^\gamma )\Delta\eta\, \dd x\right|
\leq c\left[\|\theta^\gamma\|_{H^{2-\gamma/2}}^2+ \|\theta^\gamma\|_{H^2}^2\right] \|\eta\|_{H^1}^2 + \frac14 \|\eta\|_{H^{1+\gamma/2}}^2
\end{align}
It remains to bound the last term in \eqref{eq:h1estma}. Once again, integration by parts and standard inequalities entail
\begin{align}
\left|\int_{\TT}(\Lambda^\gamma-\Lambda)\theta  \Delta \eta\, \dd x\right|&=
\left|\int_{\TT}(\Lambda^{\gamma-1}-I)\Lambda^{2-\gamma/2}\theta \, \Lambda^{1+\gamma/2} \eta\, \dd x\right|\notag\\
&\leq \| (\Lambda^{\gamma-1}-I)\theta\|_{H^{2-\gamma/2}}\|\eta\|_{H^{1+\gamma/2}}\notag\\
&\leq c\| (\Lambda^{\gamma-1}-I)\theta\|_{H^{2-\gamma/2}}^2+\frac14\|\eta\|_{H^{1+\gamma/2}}^2\label{eq:raw3}.
\end{align}
Collecting \eqref{eq:raw1}, \eqref{eq:raw2} and \eqref{eq:raw3}, and plugging the result into \eqref{eq:h1estma}, we arrive at
\begin{align}\label{eq:h1estma2}
\ddt \|\eta\|_{H^1}^2+ \|\eta\|^2_{H^{1+\gamma/2}}\leq
c\left[ \|\theta\|_{H^{3/2}}^2+\|\theta^\gamma\|_{H^{2-\gamma/2}}^2+ \|\theta^\gamma\|_{H^2}^2\right] \|\eta\|_{H^1}^2+
c\| (\Lambda^{\gamma-1}-I)\theta\|_{H^{2-\gamma/2}}^2.
\end{align}
Finally, we make use of \eqref{eq:gamma} with the choice
$$
m=2-\frac\gamma2, \qquad  s=\frac12,
$$ 
which complies with the assumptions of Lemma \ref{lem:gamma} since $\gamma\leq 3/2$, so that from \eqref{eq:h1estma2} we obtain
\begin{align}\label{eq:h1estma3}
\ddt \|\eta\|_{H^1}^2+ \|\eta\|^2_{H^{1+\gamma/2}}\leq
c\left[ \|\theta\|_{H^{3/2}}^2+\|\theta^\gamma\|_{H^{2-\gamma/2}}^2+ \|\theta^\gamma\|_{H^2}^2\right] \|\eta\|_{H^1}^2+
c(\gamma-1)^2\| \theta\|_{H^{5/2-\gamma/2}}^2.
\end{align}
Note that from the results in \cites{CTV14,CCZV15}, if $\theta_0 \in H^{5/2-\gamma/2}\subset H^{2-\gamma_0/2}$, then 
\begin{equation}
\sup_{t\geq 0}\sup_{\theta_0\in A_\gamma}\left[\|S_1(t)\theta_0\|^2_{H^{2-\gamma_0/2}}+\int_t^{t+1}\|S_\gamma(\tau)\theta_0\|^2_{H^{5/2-\gamma_0/2}}\dd \tau\right]\leq c,
\end{equation}
where $c$ only dependence on the (uniform) bounds on $A_\gamma$. Furthermore, thanks to Corollary \ref{cor:cbdfbdfs} and the above
estimate, all the quantities in \eqref{eq:h1estma3} are integrable in time.
Therefore, the Gronwall lemma, together with the fact that $\eta(0)=0$, yields
\begin{align}
\|\eta(t)\|_{H^1}^2\leq c(\gamma-1)^2, \qquad \forall t\geq0,
\end{align}
which is precisely the claim of Proposition \ref{prop:conver}.

\section*{Acknowledgements}
The author would like to thank Jacob Bedrossian, Alexey Cheskidov, 
Peter Constantin, Hao Jia and Vlad Vicol for helpful discussions.
This work was supported in part by an AMS-Simons Travel Award.

\begin{bibdiv}
\begin{biblist}

\bib{B02}{article}{
   author={Berselli, Luigi C.},
   title={Vanishing viscosity limit and long-time behavior for 2D
   quasi-geostrophic equations},
   journal={Indiana Univ. Math. J.},
   volume={51},
   date={2002},
   pages={905--930},
}

\bib{CV10a}{article}{
   author={Caffarelli, Luis A.},
   author={Vasseur, Alexis},
   title={Drift diffusion equations with fractional diffusion and the
   quasi-geostrophic equation},
   journal={Ann. of Math. (2)},
   volume={171},
   date={2010},
   pages={1903--1930},
}

\bib{CD14}{article}{
   author={Cheskidov, A.},
   author={Dai, M.},
   title={The existence of a global attractor for the forced critical surface quasi-geostrophic equation in $L^2$},
   journal = {ArXiv e-prints},
   eprint = {1402.4801},
   date = {2014},
}

\bib{CD15}{article}{
   author={Cheskidov, A.},
   author={Dai, M.},
   title={Determining modes for the surface Quasi-Geostrophic equation},
   journal = {ArXiv e-prints},
   eprint = {1507.01075},
   date = {2015},
}

\bib{CCZV15}{article}{
   author={Constantin, Peter},
   author={Coti Zelati, M.},
   author={Vicol, Vlad},
   title={Uniformly attracting limit sets for the critically dissipative SQG equation},
   journal={Nonlinearity},
   volume={29},
   date={2016},
   number={2},
   pages={298--318},
}

\bib{CMT94}{article}{
   author={Constantin, Peter},
   author={Majda, Andrew J.},
   author={Tabak, Esteban},
   title={Formation of strong fronts in the $2$-D quasigeostrophic thermal
   active scalar},
   journal={Nonlinearity},
   volume={7},
   date={1994},
   pages={1495--1533},
}

\bib{CTV14}{article}{
   author={Constantin, Peter},
   author={Tarfulea, Andrei},
   author={Vicol, Vlad},
   title={Absence of anomalous dissipation of energy in forced two
   dimensional fluid equations},
   journal={Arch. Ration. Mech. Anal.},
   date={2014},
   number={3},
   pages={875--903},
}

\bib{CTV13}{article}{
   author={Constantin, Peter},
   author={Tarfulea, Andrei},
   author={Vicol, Vlad},
   title = {Long time dynamics of forced critical SQG},
   journal={Comm. Math. Phys.},
   volume={335},
   date = {2014},
   pages={93--141},
}

\bib{CV12}{article}{
   author={Constantin, Peter},
   author={Vicol, Vlad},
   title={Nonlinear maximum principles for dissipative linear nonlocal
   operators and applications},
   journal={Geom. Funct. Anal.},
   volume={22},
   date={2012},
   pages={1289--1321},
}

\bib{CW99}{article}{
   author={Constantin, Peter},
   author={Wu, Jiahong},
   title={Behavior of solutions of 2D quasi-geostrophic equations},
   journal={SIAM J. Math. Anal.},
   volume={30},
   date={1999},
   pages={937--948},
}

\bib{CW08}{article}{
    author={Constantin, Peter},
    author={Wu, Jiahong},
    title={Regularity of {H}\"older continuous solutions of the supercritical quasi-geostrophic equation},
    journal={Ann. Inst. H. Poincar\'e Anal. Non Lin\'eaire},
    volume={25},
    date={2008},
    pages={1103--1110},
}

\bib{CC04}{article}{
   author={C{\'o}rdoba, Antonio},
   author={C{\'o}rdoba, Diego},
   title={A maximum principle applied to quasi-geostrophic equations},
   journal={Comm. Math. Phys.},
   volume={249},
   date={2004},
   pages={511--528},
}

\bib{CZ13}{article}{
   author={Coti Zelati, Michele},
   title={On the theory of global attractors and Lyapunov functionals},
   journal={Set-Valued Var. Anal.},
   volume={21},
   date={2013},
   pages={127--149},
}

\bib{CZV14}{article}{
   author={Coti Zelati, M.},
   author={Vicol, Vlad},
   title={On the global regularity for the supercritical SQG equation},
   journal={Indiana Univ. Math. J.},
   volume={65},
   date={2016},
   number={2},
   pages={535--552},
}

\bib{CZK15}{article}{
   author={Coti Zelati, M.},
   author={Kalita, P.},
   title={Smooth attractors for weak solutions of the SQG equation with
   critical dissipation},
   journal={Discrete Contin. Dyn. Syst. Ser. B},
   volume={22},
   date={2017},
   number={5},
   pages={1857--1873},
}

\bib{Dong10}{article}{
	author = {Dong, Hongjie},
	title = {Dissipative quasi-geostrophic equations in critical Sobolev spaces: smoothing effect and global well-posedness},
	journal = {Discrete Contin. Dyn. Syst.},
	volume = {26},
	date = {2010},
	number = {4},
	pages = {1197--1211},
}

\bib{DP09}{article}{
	author={Dong, Hongjie},
	author={Pavlovic, Natasa},
	title={Regularity Criteria for the Dissipative Quasi-Geostrophic Equations in H\"older Spaces},
	journal={Commun. Math. Phys.},
	volume={290},
	date={2009},
	pages={801--812},
}

\bib{FPV09}{article}{
   author={Friedlander, S.},
   author={Pavlovi{\'c}, N.},
   author={Vicol, V.},
   title={Nonlinear instability for the critically dissipative
   quasi-geostrophic equation},
   journal={Comm. Math. Phys.},
   volume={292},
   date={2009},
   pages={797--810},
}

\bib{Hale}{book}{
   author={Hale, J.K.},
   title={Asymptotic behavior of dissipative systems},
   publisher={American Mathematical Society},
   place={Providence, RI},
   date={1988},
}

\bib{Ju04}{article}{
   author={Ju, Ning},
   title={Existence and uniqueness of the solution to the dissipative 2D
   quasi-geostrophic equations in the Sobolev space},
   journal={Comm. Math. Phys.},
   volume={251},
   date={2004},
   pages={365--376},
}

\bib{J05}{article}{
   author={Ju, Ning},
   title={The maximum principle and the global attractor for the dissipative
   2D quasi-geostrophic equations},
   journal={Comm. Math. Phys.},
   volume={255},
   date={2005},
   pages={161--181},
}

\bib{Ju07}{article}{
   author={Ju, Ning},
   title={Dissipative 2D quasi-geostrophic equation: local well-posedness,
   global regularity and similarity solutions},
   journal={Indiana Univ. Math. J.},
   volume={56},
   date={2007},
   pages={187--206},
}

\bib{KP88}{article}{
   author={Kato, Tosio},
   author={Ponce, Gustavo},
   title={Commutator estimates and the Euler and Navier-Stokes equations},
   journal={Comm. Pure Appl. Math.},
   volume={41},
   date={1988},
   pages={891--907},
}

\bib{KPV91}{article}{
   author={Kenig, Carlos E.},
   author={Ponce, Gustavo},
   author={Vega, Luis},
   title={Well-posedness of the initial value problem for the Korteweg-de
   Vries equation},
   journal={J. Amer. Math. Soc.},
   volume={4},
   date={1991},
   pages={323--347},
}

\bib{KN09}{article}{
   author={Kiselev, A.},
   author={Nazarov, F.},
   title={A variation on a theme of Caffarelli and Vasseur},
   journal={Zap. Nauchn. Sem. S.-Peterburg. Otdel. Mat. Inst. Steklov. (POMI)},
   volume={370},
   date={2009},
   pages={58--72, 220},
}

\bib{KNV07}{article}{
   author={Kiselev, A.},
   author={Nazarov, F.},
   author={Volberg, A.},
   title={Global well-posedness for the critical 2D dissipative quasi-geostrophic equation},
   journal={Invent. Math.},
   volume={167},
   date={2007},
   pages={445--453},
}

\bib{MV98}{article}{
   author={Melnik, Valery S.},
   author={Valero, Jos{\'e}},
   title={On attractors of multivalued semi-flows and differential
   inclusions},
   journal={Set-Valued Anal.},
   volume={6},
   date={1998},
   pages={83--111},
}

\bib{P87}{book}{
   author={Pedlosky, Joseph},
   title={Geophysical Fluid Dynamics},
   publisher={Springer},
   place={New York},
   date={1987},
}

\bib{R95}{book}{
   author={Resnick, Serge G.},
   title={Dynamical problems in non-linear advective partial differential
   equations},
   note={Thesis (Ph.D.)--The University of Chicago},
   date={1995},
}

\bib{SellYou}{book}{
   author={Sell, G.R.},
   author={You, Y.},
   title={Dynamics of evolutionary equations},
   publisher={Springer-Verlag, New York},
   date={2002},
}

\bib{Sil10a}{article}{
   author={Silvestre, Luis},
   title={Eventual regularization for the slightly supercritical
   quasi-geostrophic equation},
   journal={Ann. Inst. H. Poincar\'e Anal. Non Lin\'eaire},
   volume={27},
   date={2010},
   pages={693--704},
}

\bib{T3}{book}{
   author={Temam, R.},
   title={Infinite-dimensional dynamical systems in mechanics and physics},
   publisher={Springer-Verlag},
   place={New York},
   date={1997},
}

\bib{WT13}{article}{
   author={Wang, M.},
   author={Tang, Y.},
   title={On dimension of the global attractor for 2D quasi-geostrophic
   equations},
   journal={Nonlinear Anal. Real World Appl.},
   volume={14},
   date={2013},
   pages={1887--1895},
}

\end{biblist}
\end{bibdiv}

\end{document}